\newcommand{\color}[2][{}]{}         
\renewcommand\mathcal\mathscr  
\numberwithin{equation}{section}
\newcounter{myenumi}
\newenvironment{myenumerate}[1]{
\begin{list}{\indent(\themyenumi) }
  {\renewcommand{\themyenumi}{#1{myenumi}}
    \usecounter{myenumi}
    \setlength{\topsep}{0em}
    \setlength{\itemsep}{0em}
    \setlength{\leftmargin}{0em}
    \setlength{\labelwidth}{0em}
    \setlength{\labelsep}{0em}
  }
  }
  {
  \end{list}
  }
\newcommand{\itemref}[1]{\eqref{#1}}
\theoremstyle{plain}            
\newtheorem{theorem}{Theorem}[section]
\newtheorem{proposition}[theorem]{Proposition}
\newtheorem{lemma}[theorem]{Lemma}
\newtheorem{corollary}[theorem]{Corollary}
\theoremstyle{definition}       
\newtheorem{definition}[theorem]{Definition}
\newtheorem{remark}[theorem]{Remark}
\newtheorem*{remark*}{Remark}
\newtheorem{remarks}[theorem]{Remarks}
\newcommand{\Sec}[1]{Section~\ref{sec:#1}}
\newcommand{\App}[1]{Appendix~\ref{app:#1}}
\newcommand{\Fig}[1]{Figure~\ref{fig:#1}}
\newcommand{\Thm}[1]{Theorem~\ref{thm:#1}}
\newcommand{\Lem}[1]{Lemma~\ref{lem:#1}}
\newcommand{\Lemenum}[2]{Lemma~\ref{lem:#1}~(\ref{#2})}
\newcommand{\Cor}[1]{Corollary~\ref{cor:#1}}
\newcommand{\Prp}[1]{Proposition~\ref{prp:#1}}
\newcommand{\Rem}[1]{Remark~\ref{rem:#1}}
\newcommand{\Def}[1]{Definition~\ref{def:#1}}
\newcommand{\abs}[2][{}]{\lvert{#2}\rvert_{{#1}}}    
\newcommand{\abssqr}[2][{}]{\lvert{#2}\rvert^2_{#1}} 
\newcommand{\bigabs}[2][{}]{\bigl\lvert{#2}\bigr\rvert_{#1}}     
\newcommand{\Bigabs}[2][{}]{\Bigl\lvert{#2}\Bigr\rvert_{#1}}     
\newcommand{\normsymb}{|\!|}
\newcommand{\bignormsymb}[1]{#1|\!#1|}
\newcommand{\norm}[2][{}]{\normsymb{#2}\normsymb_{{#1}}}    
\newcommand{\normsqr}[2][{}]{\normsymb{#2}\normsymb^2_{#1}} 
\newcommand{\bignorm}[2][{}]{\bignormsymb{\bigl}{#2}\bignormsymb{\bigr}_{#1}}
\newcommand{\scapro}[3][{}]{\langle{#2},{#3}\rangle_{#1}}  
\newcommand{\bigscapro}[3][{}]{\bigl\langle{#2},{#3}\bigr\rangle_{#1}}
\newcommand{\set}[2]{\{ \, #1 \, ; \, #2 \, \} }      
\newcommand{\bigset}[2]{\bigl\{ \, #1 \, ; \, #2 \, \bigr\} }
\newcommand{\map}[3]{ #1 \colon #2 \longrightarrow #3}    
\newcommand{\clo}[1]{\overline{{#1}}} 
\newcommand{\conj}[1]{\overline {{#1}}}  
\newcommand{\dd}    {\, \mathrm d}    
\DeclareMathOperator{\dist}   {dist}
\DeclareMathOperator{\dom}    {Dom}
\DeclareMathOperator{\ran}    {Ran}
\DeclareMathOperator{\inj}    {inj}  
\DeclareMathOperator{\Ric}    {Ric}
\DeclareMathOperator{\supp}   {supp}
\DeclareMathOperator{\dvol}    {d\, vol}
\newcommand{\strong}          {\mathrm s}    
\DeclareMathOperator{\slim}   {\strong\text{-}lim}  
\newcommand{\specsymb} {\sigma} 
\newcommand{\spec}[2][{}]   {\specsymb_{\mathrm{#1}}(#2)}
\newcommand{\eps}{\varepsilon} 
\renewcommand{\phi}{\varphi}   
\renewcommand{\rho}{\varrho}   
\renewcommand{\theta}{\vartheta}
\newcommand{\R}{\mathbb{R}} 
\newcommand{\C}{\mathbb{C}} 
\newcommand{\N}{\mathbb{N}} 
\newcommand{\e}{\mathrm e}  
\newcommand{\im}{\mathrm i} 
\newcommand{\wt}{\widetilde}           
\renewcommand{\mid}{\hskip.5ex ; \hskip.5ex}
\newcommand{\Sobsymb} {\mathsf H}      
\newcommand{\Sobnsymb} {\ring{\mathsf H}}   
\newcommand{\SobWsymb}{\mathsf W}      
\newcommand{\Contsymb} {\mathsf C}     
\newcommand{\Lsymb}    {\mathsf L}     
\newcommand{\Sobspace}[1][1]{\Sobsymb^{#1}} 
\newcommand{\Sobnspace}[1][1]{\Sobnsymb^{#1}} 
\newcommand{\SobWspace}[2][p]{\SobWsymb_{#1}^{#2}}  
\newcommand{\Contspace}[1][{}]{\Contsymb^{#1}}     
\newcommand{\Cispace}{\Contsymb^\infty}     
\newcommand{\Lpspace}[1][p]    {\Lsymb_{#1}}     
\newcommand{\Lsqrspace}    {\Lpspace[2]}     
\newcommand{\Ci} [2][{}]{\Cispace_{#1} ({#2})}
\newcommand{\Cci}[1]{\Ci[\mathrm c]{#1}}
\newcommand{\Cont}[2][{}]{\Contspace[#1]({#2})}
\newcommand{\Schwartz}[1]{\mathcal S(#1)}
\newcommand{\Lsqr}[2][{}]{\Lsqrspace^{#1}({#2})} 
\newcommand{\Lsqrloc}[2][{}]{\Lpspace [2,\mathrm{loc}]^{#1}({#2})}
\newcommand{\Linfty}[2][{}]{\Lpspace [\infty] ^{#1}({#2})} 
\newcommand{\Sob}[2][1]{\Sobspace [#1]({#2})}         
\newcommand{\Sobn}[2][1]{\Sobnspace [#1]({#2})}  
\newcommand{\Sobx}[3][1]{\Sobspace [#1]_{{#2}}({#3})} 
\newcommand{\SobW}[3][p]{\SobWspace[#1]{#2}(#3)} 
\newcommand{\Sobloc}[2][1]{\Sobx[#1]{\mathrm{loc}}{#2}}
\newcommand{\FF}{\mathcal F}
\newcommand{\HH}{\mathcal H}
\newcommand{\MM}{\mathcal M}
\newcommand{\XX}{\mathcal X}
\newcommand{\Met}{\mathsf{Met}}  
\newcommand {\upp}{\mathrm{u}}    
\newcommand {\low}{\ell}          
\newcommand {\dec}{\mathrm{dec}}
\newcommand {\ac} {\mathrm {ac}}
\newcommand {\ext} {\mathrm {ext}}
\newcommand {\interior} {\mathrm {int}}
\newcommand{\quadtext}[1]{\quad\text{#1}\quad}
\newcommand{\slimpm}{\slim_{t \to \pm \infty}}
\newcommand{\calC}{C} 
\newcommand{\Qp}{q_+} 
\newcommand{\Qm}{q_-} 
\newcommand{\Qpm}{q_\pm} 
\newcommand{\Pt}{p}  
\newcommand{\gEucl}{E}  
\renewcommand{\gEucl}{{g_{\mathrm{E}}}}  
\begin{document}

\title[Open Scattering Channels for a Two-Sheeted Covering]%
{On Open Scattering Channels for a Branched Covering of the Euclidean
  Plane.}

\author{Rainer Hempel}
\address{Institute for Computational Mathematics, TU Braunschweig,
  Universit\"atsplatz 2, 38106 Braunschweig, Germany}
\email{r.hempel@tu-bs.de}

\author{Olaf Post}
\address{Mathematik, Fachbereich 4, Universit\"at Trier, 54286 Trier, Germany}
\email{olaf.post@uni-trier.de}

\date{\today \quad  \emph{File:} \texttt{\jobname.tex}}

\subjclass[2010]{35P, 35Q, 81U}

\begin{abstract}
   We study the interaction of two scattering channels for a simple
   geometric model consisting in a double covering of the plane with 
   two branch points, equipped with the Euclidean metric. 
  We show that the scattering channels are open in the sense of~\cite{hpw:14} 
 and that this property is stable under suitable perturbations of the metric.
\end{abstract}

\maketitle


%
\section{Introduction}
\label{sec:intro}
%
%
Let $M$ denote a branched covering of the plane, obtained by glueing
two copies of $\R^2$ along a straight-line cut between the points $\Qm
= (-1,0)$ and $\Qp = (+1,0)$, where the northern edge of the upper
copy of $\R^2$ is joined to the southern edge of the lower copy, and
vice-versa (see \Fig{1}). The branch points $\Qpm$ do \emph{not}
belong to $M$.
The manifold $M$ is a real version of the complex Riemann surface
associated with the function $\sqrt{z^2 - 1}$.  With the Euclidean
metric $\gEucl$ of $\R^2$, we obtain a smooth, connected Riemannian
manifold $\MM = (M,\gEucl)$ with curvature zero; note, however, that
$\MM$ is not complete. In the second part of the paper we will
consider Riemmannian metrics $g$ on $M$ which are close to $\gEucl$ in
a suitable sense so that the perturbational results of~\cite{hpw:14}
can be applied.

We let $H$ denote the Laplacian of $\MM$, a self-adjoint operator
acting in the Hilbert space $\HH = \Lsqr \MM$.  For a metric $g$ on
$M$, different from the Euclidean metric, we denote the associated
Laplacian by $H_g$.  It is the aim of this paper to study some
asymptotic properties of the unitary groups $(\e^{-\im t H} ; t \in
\R)$ and $(\e^{-\im t H_g} ; t \in \R)$.  In particular, we are
interested in the question whether there is transmission from the
lower to the upper sheet and vice versa. As noted by Percy Deift 
(private communication), this amounts to the question
\begin{center}
  \emph{``When I shout on the lower plane, will I be heard on the
    upper plane?''}
\end{center}
\begin{figure}[h]
  \centering
    \begin{picture}(80,30)
      \includegraphics{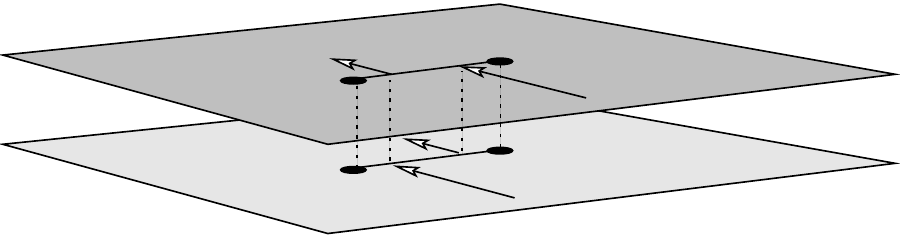}
      \put(-8,28){\vector(-3,-1){30}}
      \put(-8,28){\vector(-3,-2){30}}
      \put(-6,28){$q_+$}
      \put(-87,26){\vector(3,-1){30}}
      \put(-87,26){\vector(3,-2){30}}
      \put(-87,28){$q_-$}
      \put(-10,0){$M$}
      \put(0,5){$M_\low$}
      \put(0,15){$M_\upp$}
    \end{picture}
    \caption{The double covering $M$ with two branch points $q_-$ and
      $q_+$, and the straight line cut $\Gamma$ between $q_-$ and
      $q_+$.  If one arrives from the lower sheet $M_\low$ from below
      (in the picture from the right), then one continues on the upper
      sheet $M_\upp$ and vice versa. Points along the dashed lines are
      identified as explained above.}
\label{fig:1}
\end{figure}
For the comparison dynamics (with two scattering channels) we take the
free Laplacian on two copies of $\R^2$ which we may imagine to lie one
atop of the other. In other words, we consider the Hilbert space
$\HH_0 = \Lsqr {\R^2} \oplus \Lsqr{\R^2}$ and we let $H_0$ denote the
direct sum of two copies of the self-adjoint Laplacian in
$\Lsqr{\R^2}$,
\begin{equation*}
    H_0 = H_{0,\low} \oplus H_{0,\upp}, 
\end{equation*}
where the indices $\low$ and $\upp$ mean ``lower'' and ``upper,''
respectively. $H_0$ is (purely) absolutely continuous. With a natural (unitary) identification $J \colon \HH_0
\to \HH$ the wave operators
\begin{equation*}
   W_\pm(H,H_0,J) =\slimpm e^{\im tH} J e^{-\im tH_0},  
\end{equation*}
exist, are complete, and isometric, as will be seen in
\Sec{wave-ops-euclid}. Since also $H$ is absolutely continuous 
the wave operators $W_\pm(H,H_0,J)$ are in fact unitary. 
Writing $J = J_\low \oplus J_\upp$, the
\emph{channel wave operators} $W_\pm(H, H_{0,\low}, J_\low)$ and
$W_\pm(H, H_{0,\upp}, J_\upp)$ are given by
\begin{equation*}
     W_\pm(H, H_{0,k}, J_k)
     =\slimpm e^{\im tH} J_k e^{-\im tH_{0,k}}, 
     \qquad k \in \{\low, \upp\}. 
\end{equation*}
Note that $f \in \ran W_+(H, H_{0,\upp}, J_\upp)$ means that there
exists $h \in \HH_\upp$ such that
\begin{equation*}
  \norm{\e^{-\im t H} f - J_\upp \e^{-\im tH_{0,\upp}} h } \to 0, 
  \qquad t \to +\infty; 
\end{equation*}
in particular, $\e^{-\im tH}f$ is asymptotically in the \emph{upper}
sheet, as $t \to +\infty$.  This leads to the question whether states
which come in on the lower sheet will also go out on the lower sheet,
or whether there are states which change sheets as $t$ goes from
$-\infty$ to $+\infty$. We construct, indeed, states that move from
the lower to the upper sheet, up to a small error.  It follows that
there is non-zero transmission between the upper and the lower sheets
of $\MM$, or, in the terminology of~\cite{hpw:14}, that the upper and
the lower channels are open. By symmetry there is also transmission
from the upper to the lower sheet; since it is more or less trivial
that there is transmission within the two sheets we find that all
scattering channels are open one to another. This is stated as
\Thm{1.9}.

We next ask whether the scattering channels remain open when the
Euclidean metric $\gEucl$ on $M$ is replaced with a more general
metric $g$ on $M$ which is close to $\gEucl$ at infinity in the sense
of~\cite{hpw:14}.  The corresponding assumptions concern, in
particular, the harmonic radius~\cite{anderson-cheeger:92, hpw:14})
and the injectivity radius of $(M,g)$, and the difference of the
Riemannian metrics $\gEucl$ and $g$ in a suitable distance function.
Here we profit in several ways from the fact that the geometry of
$\MM$ is so simple.  We require that the metrics $g$ and $\gEucl$ be
\emph{quasi-isometric} in the usual sense (cf.~\Def{2.2}), and we
assume a global bound on the curvature of $(M,g)$. Under additional
assumptions on $g$, expressed in terms of the distance
$\tilde{d}_1(\gEucl,g)$ in eqn.~\eqref{eq:2.6}, \Thm{2.3} states that
the wave operators
\begin{equation}
   \label{eq:wave-operators.E-g}
   W_\pm(H_g, H_\gEucl, I_g) :=\slimpm \e^{\im tH_g} I_g \e^{-\im tH_\gEucl} 
\end{equation} 
exist and are complete, where $H_\gEucl = H$ is the Laplacian of
$(M,\gEucl)$, $H_g$ is the Laplacian of $(M,g)$, and $I_g$ is the
natural identification between $\Lsqr{M,\gEucl}$ and $\Lsqr{M,g}$; as 
was mentioned earlier, $H_\gEucl$ is purely absolutely continuous. 

In \Thm{2.3}, smallness of the perturbation is only required at
infinity. In contrast, for the question of openness of the scattering
channels the deviation of $g$ from $\gEucl$ has to satisfy a global,
quantitative smallness condition. Then \Thm{2.4} establishes the
strong convergence of the scattering operators
\begin{equation*}
    S(H_{g_\eps}, H_0, I_{g_\eps} J) 
    :=  \bigl(W_+(H_{g_\eps}, H_0, I_{g_\eps}J)\bigr)^* 
                \circ W_-(H_{g_\eps}, H_0, I_{g_\eps}J) 
\end{equation*} 
to $S(H, H_0, J)$ for a sequence of metrics $g_\eps$ on $M$ tending to
$\gEucl$ as $\eps \downarrow 0$.  In \Cor{2.5} we then obtain the
openness of all scattering channels for small $\eps$.

The paper is organized as follows. In \Sec{wave-ops-euclid}  we
introduce most of our notation and we discuss some basic spectral
properties of the manifold $\MM = (M,\gEucl)$, deferring the details 
and proofs to Appendix A. We then turn to
scattering for the pair $(H, H_0)$ where we establish existence and
completeness of the wave operators. The technically difficult part of 
\Sec{wave-ops-euclid} concerns the construction of a wave packet that
comes in from infinity on the lower sheet and moves out to infinity on
the upper sheet. Here we use ideas from En{\ss}' theory of scattering
and stationary phase estimates to construct states that pass between
the branch points $\Qpm$ at time $t = 0$ at high speed, and which are
essentially localized to a double cone.

In \Sec{pert-metric} we consider metrics $g$ on $M$ that are close
(or, at least, close at infinity) to the Euclidean metric $\gEucl$. In
essence, we only have to write down what the basic definitions and
results of~\cite{hpw:14} mean in the present context. We then find
simple conditions for the existence and completeness of the wave
operators~\eqref{eq:wave-operators.E-g} as well as for a non-trivial
interaction between the scattering channels for $(M,g)$.

The main results of \Sec{pert-metric} are illustrated in
\Sec{examples} by a simple class of metrics on $M$, namely metrics $g
= g_f$ that come from the graph of smooth functions $f$ on $M$.  It
turns out that it is fairly easy to indicate conditions on $f$ so that
the metric $g_f$ satisfies the requirements of \Thm{2.3}.  We finally
discuss branched coverings with more than two sheets and corresponding
generalizations of the present results.

 The paper comes with three appendices; the first two of them
 are mainly included for the convenience of the reader.
 \App{spectral-properties} is
 devoted to self-adjoint extensions, compactness and spectral
 properties of the Laplacian with metrics $\gEucl$ and $g$.
 As for the absolute continuity of $H_\gEucl$ and $H_g$,
 we mainly refer to some work of Donnelly~\cite{donnelly:99}
 and Kumura~\cite{kumura:10, kumura:13}.
 
In \App{stationary-phase} we recall a basic estimate from stationary
phase theory to establish an estimate on the localization error for
the Schr\"odinger evolution. More precisely, for suitably chosen
initial data $u_0$ in the Schwartz space $\Schwartz {\R^2}$ we
multiply $u(t) := \e^{\im t \Delta} u_0$ by a cut-off function $\chi$
and obtain estimates for $\nabla \chi \cdot \nabla u(t)$ and $(\Delta
\chi) u(t)$ in the $\Lsqrspace$-norm.

\App{lower-bound-inj-rad} is devoted to lower bounds for the
injectivity radius of $(M,g)$ where the metric $g$ on $\R^2$ or on $M$
is close to the Euclidean metric. Starting from a comparison result of
M\"uller and Salomonsen~\cite{mueller-salomonsen:07} we obtain
``local'' versions by means of cut-offs and extension theorems,
proceeding from $\R^2$ via $\R^2 \setminus \{(0,0)\}$ to $M$.

We conclude the introduction with a few remarks concerning the literature.
The paper \cite{hpw:14} and the literature quoted there give
a partial overview of Riemannian scattering on manifolds with ends. 
Recent progress in this direction can be found in G\"uneysu and Thalmaier
~\cite{gueneysu-thalmaier:17}. The specific case of manifolds with
branch points has been studied in recent years under various aspects and
our results have some overlap with the work of Hillairet and others;
cf.~\cite{hillairet:10}  and \cite{ford-hassell-hillairet:15}.
There is a connection between the analysis of the Aharonov-Bohm effect
in Quantum Mechanics and branched coverings of Euclidean space;
cf.~\cite{bnhho:09}. Scattering for magnetic Schr\"odinger operators
with two magnetic point charges has been studied in a number of papers; as
an example, we mention Ito and Tamura \cite{ito-tamura:01} which has
some connection with our investigations. 

\subsection*{Acknowledgements}

The authors thank Luc Hillairet (Univ.\ d'Orl\'eans) for an 
interesting discussion and comments. Rainer Hempel would like 
to express his gratitude to Brian Davies
(King's College, London), Percy Deift (Courant Institute, New York),
Ira Herbst (Univ.\ of Virginia, Charlottesville), Barry Simon
(Caltech, Pasadena), and Larry Thomas (Univ.\ of Virginia) for
valuable discussions and suggestions concerning the matter of the
present paper.

%
\section{Wave operators for the Euclidean metric}
\label{sec:wave-ops-euclid}  
%

Let us begin with some notation. As far as general notation
for self-adjoint operators $T$ in a Hilbert space $\HH$ is concerned we
mostly follow~\cite{kato:66} and~\cite{reed-simon-1}.  In particular,
we let $\HH_\ac(T)$ denote the absolutely continuous subspace of $\HH$
associated with $T$, and $P_\ac(T)$ the orthogonal projection onto
$\HH_\ac(T)$.  For the general formal setup of multi-channel
scattering we refer to Section~4 of~\cite{hpw:14} and the literature
quoted there.  Since the model studied in the present paper is so
simple, we develop most notions in multi-channel scattering directly
as we go along.

Let $M$ be defined as in the Introduction. We then denote the points
$\Pt$ of $M$ by $((x,y), \low)$ or $((x,y),\upp)$ where $(x,y) \in
\R^2$ and ``$\low$'' means ``lower,'' ``$\upp$'' means ``upper''.
This works for all points of $M$ with the exception of the points with
$-1 < x < 1$ and $y = 0$; note that these exceptional points form a
set of measure zero. With $\gEucl$ denoting the metric tensor $\gEucl
= (\delta_{ij})$ we obtain the Riemannian manifold $\MM :=
(M,\gEucl)$.  For the remainder of this section we will be cavalier
about the distinction between $M$ and $\MM=(M,\gEucl)$ and we will
mostly write $M$.  For two points $\Pt_1$, $\Pt_2 \in M$ the
(geodesic) distance is then given by
\begin{equation}
  \label{eq:1.1}
  \dist(\Pt_1,\Pt_2) 
  := \inf \set{\abs \gamma}{\gamma(0) = \Pt_1, \gamma(1) = \Pt_2} 
\end{equation}
where $\gamma \colon [0,1] \to M$ is a rectifiable curve and $\abs
\gamma$ denotes the length of $\gamma$.  It will be useful to extend
the definition of distance to the branch points $\Qm$ and $\Qp$.  The
infimum in~\eqref{eq:1.1} is attained either for a straight line
segment connecting $\Pt_1$ and $\Pt_2$ or for (the union of) two straight line
segments that meet at one of the branch points.  E.g., if $\Pt_1 =
((0,y), \low)$, $\Pt_2 = ((0,-y),\low)$ with $y > 0$, then
$\dist(\Pt_1,\Pt_2) = \dist(\Pt_1, \Qm) + \dist(\Qm,\Pt_2) = 2 \sqrt{1
  + y^2}$ (see \Fig{2} left).
\begin{figure}[h]
  \centering
  \begin{minipage}{0.45\linewidth}
    \begin{picture}(80,75)
      \includegraphics{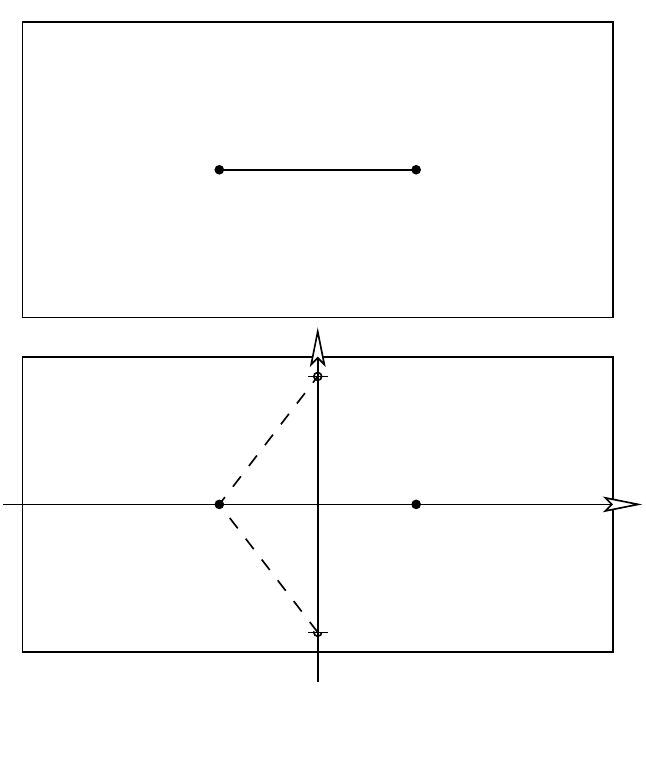}
      \put(-47,62){$q_-$}
      \put(-47,28){$q_-$}
      \put(-37,31){$r$}
      \put(-23,62){$q_+$}
      \put(-23,28){$q_+$}
      \put(-31,38){$y$}
      \put(-32,13){$-y$}
      \put(0,26){$x$}
      \put(-39,38){$p_1$}
      \put(-39,13){$p_2$}
      \put(-2.5,45){$M_\upp$}
      \put(-2.5,11){$M_\low$}
      \put(-13,5){$M$}
    \end{picture}
  \end{minipage}
  \begin{minipage}{0.45\linewidth}
    \begin{picture}(80,75)
      \includegraphics{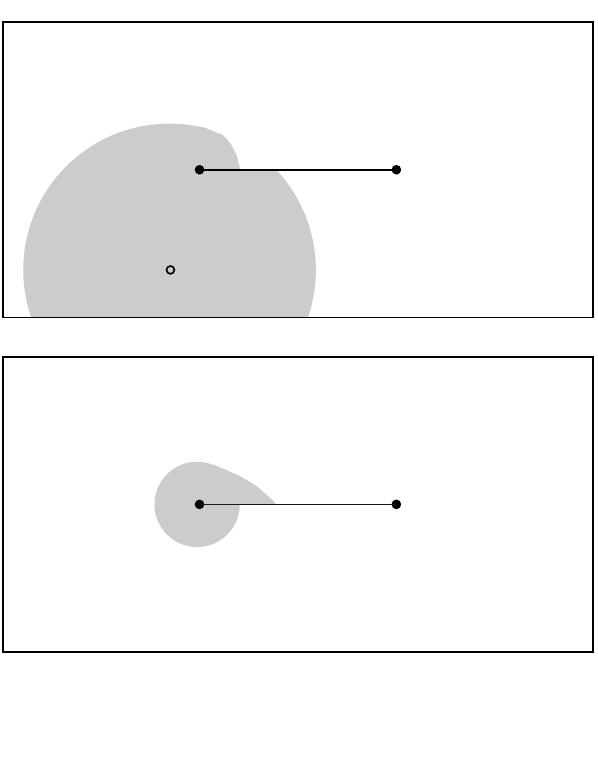}
      \put(-44,62){$q_-$}
      \put(-44,28){$q_-$}
      \put(-46,46){$p_0$}
      \put(-23,62){$q_+$}
      \put(-23,28){$q_+$}
      \put(0.5,45){$M_\upp$}
      \put(0.5,11){$M_\low$}
      \put(-13,5){$M$}
    \end{picture}
  \end{minipage}
    \caption{Left: The distance between $p_1$ and $p_2$ is $2r=2\sqrt{1+y^2}$.
      Right: The shaded area is a disc $B_r(p_0)$ with points in
      both sheets.}
\label{fig:2}
\end{figure}

For a point $\Pt_0 \in M$, we denote the (geodesic) disc of radius 
$r > 0$ and center $(x_0, y_0)$ by $B_r(\Pt_0)$, i.e., 
\begin{equation}
  \label{eq:1.2}
   B_r(\Pt_0) = \set{\Pt \in M}  {\dist(\Pt, \Pt_0) < r};  
\end{equation}
such discs may or may not contain points in both sheets (see \Fig{2}
right), and they may even contain pairs of points $(p, p')$ with the
same $(x,y)$-coordinates and $p$ in the lower, $p'$ in the upper
sheet.  A disk $B_r(p_0)$ will be ``single-valued'' if and only if $r
\le \min\{\dist(p_0, q_+), \dist(p_0,q_- \}$. In the extreme case of
$p_0 \in \{q_+, q_-\}$ and $0< r \le 2$ the disk $B_r(p_0)$ will just
be a double covering of the punctured disk $\set{(x,y) \in \R^2}{0 <
  x^2 + y^2 < r^2 }$.  
The Riemannian manifold $\MM$ is not (geodesically) complete.

In order to define the Laplacian $H$ of $\MM$, we consider the Hilbert
space $\HH := \Lsqr \MM$ with scalar product denoted by $\scapro \cdot
\cdot$, and the Sobolev space $\Sobn \MM$, given as the completion of
$\Cci M$ with respect to the norm $\norm[1] \cdot$ defined by
\begin{equation}
  \label{eq:1.3}
    \normsqr[1] \psi
    := \int_M \abssqr{\psi(x)} + \abssqr{\nabla \psi(x)} \dd x, 
    \qquad \psi \in \Cci M. 
\end{equation}  
Then $H$ is defined as the unique self-adjoint operator satisfying
$\dom(H) \subset \Sobn \MM$ and
\begin{equation}
  \label{eq:1.4}
  \scapro{Hu} v 
  = \int_M \nabla u \cdot \nabla {\conj v} \dd x, 
  \qquad u \in \dom(H), \,\,\, v \in \Sobn \MM.
\end{equation}
It is easy to see (cf.\ \App{spectral-properties}) that $\Sobn \MM$
coincides with the Sobolev space $\Sob \MM = \SobW[2] 1 \MM$,
consisting of all functions in $\Lsqr \MM$ that have first order
distributional derivatives in $\Lsqr \MM$. Hence the Laplacian on
$\Cci M$ has only one self-adjoint extension with form domain
contained in $\Sob \MM$.  However, the Laplacian is \emph{not}
essentially self-adjoint on $\Cci M$.  Basic spectral properties of
$H$ are also discussed in \App{spectral-properties}; in particular,
$H$ is purely absolutely continuous with $\spec H = \spec[ac] H =
[0,\infty)$.

We next consider the Rellich compactness property.  For the proof we
refer to \Prp{A.2} in \App{spectral-properties}.
\begin{lemma}
  \label{lem:1.2}
  For $R > 0$, let $\chi_R$ denote the characteristic function of $M_R
  = B_R(\Qm) \cup B_R(\Qp) \subset M$ (see \Fig{2}).  Then the mapping
  $\Sob \MM \ni u \mapsto \chi_R u \in \Lsqr \MM$ is compact.
\end{lemma}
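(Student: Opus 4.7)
The plan is to establish the compactness of $u \mapsto \chi_R u$ by a partition-of-unity argument, reducing in each local patch to the classical Rellich--Kondrachov theorem on bounded subdomains of $\R^2$. Away from the branch points $\Qpm$ the manifold $\MM$ is locally isometric to an open subset of $\R^2$, so no new input is needed there; the real work is the local analysis near $\Qpm$.

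Fixing some $\rho \in (0,2)$, I would cover $M_R$ by finitely many relatively compact open sets $\{U_j\}_{j=1}^N$ of two types: Lipschitz domains whose closure lies in a chart of $\MM$ isometric to an open subset of $\R^2$ (hence bounded away from $\Qpm$), and the geodesic discs $V_\pm := B_\rho(\Qpm)$. For a bounded sequence $(u_n)$ in $\Sob \MM$, the restrictions $u_n|_{U_j}$ are bounded in $\Sob{U_j}$ (with the induced Riemannian metric). Granted that each embedding $\Sob{U_j} \hookrightarrow \Lsqr{U_j}$ is compact, a diagonal extraction produces a subsequence along which $(u_n|_{U_j})$ is Cauchy in $\Lsqr{U_j}$ for every $j$; from $\chi_R \leq \sum_j \chi_{U_j}$ one then concludes that $(\chi_R u_n)$ is Cauchy in $\Lsqr \MM$.

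For the first type of patch, the compact embedding is the classical Rellich theorem. For $V_+ = B_\rho(\Qp)$ (symmetrically $V_-$) I would introduce polar coordinates $(r,\theta)$ centered at $\Qp$ with $r \in (0,\rho)$ and $\theta \in [0,4\pi)$ (endpoints identified), reflecting that $V_+$ is a two-sheeted covering of the punctured Euclidean disc of radius $\rho$ around $\Qp$; in these coordinates the metric of $\MM$ reads $dr^2 + r^2\, d\theta^2$, so $V_+$ is a flat cone of total angle $4\pi$. The diffeomorphism
\begin{equation*}
  \Psi \colon V_+ \longrightarrow B_\rho(0) \setminus \{0\} \subset \R^2,
  \qquad (r,\theta) \longmapsto (r, \theta/2),
\end{equation*}
pulls the standard Euclidean metric back to $dr^2 + 4 r^2\, d(\theta/2)^2$, which is $2$-bi-Lipschitz equivalent to the actual metric of $V_+$. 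Hence $\Psi$ induces a topological isomorphism $\Sob{V_+} \cong \Sob{B_\rho(0) \setminus \{0\}}$ with equivalent norms. Since a single point has zero $H^1$-capacity in $\R^2$, the right-hand side coincides with $\Sob{B_\rho(0)}$, to which the ordinary Rellich theorem applies.

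The main obstacle is the geometric identification near the branch points: one must recognize $V_\pm$ as a flat cone of total angle $4\pi$ rather than a subset of $\R^2$, exhibit the explicit bi-Lipschitz equivalence with a Euclidean punctured disc, and invoke the capacity argument to dispose of the missing vertex. Once this is done, the compactness claim follows by routine patching via the partition of unity from the Euclidean Rellich theorem.
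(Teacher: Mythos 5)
Your proposal is correct, and it reaches the conclusion by a genuinely different route than the paper.

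The paper proves \Lem{1.2} by appealing to \Prp{A.2}, whose argument near a branch point is spectral rather than geometric: it defines the Dirichlet Laplacian on a small geodesic disc around the branch point, separates variables in polar coordinates (angle running over $[0,4\pi)$), observes that the resulting radial operators $h_\ell = -\frac{d^2}{dr^2} + \frac{\ell^2-1}{4r^2}$ on $(0,1)$ have purely discrete spectra with lowest eigenvalues tending to infinity, and concludes that the Dirichlet Laplacian has compact resolvent. It then transfers this to the Sobolev embedding by a weak-convergence/cutoff argument. Your approach replaces this spectral step with an explicit geometric one: you unfold the double cover of the punctured disc (a flat cone of total angle $4\pi$) onto a Euclidean punctured disc via the angle-halving map, observe that this is a $2$-bi-Lipschitz diffeomorphism, use the vanishing $H^1$-capacity of a point in $\R^2$ to identify $\Sob{B_\rho(0)\setminus\{0\}}$ with $\Sob{B_\rho(0)}$, and then invoke the ordinary Rellich theorem. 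Both arguments are sound; yours is arguably more elementary in that it avoids any spectral analysis and works purely with bi-Lipschitz invariance of Sobolev spaces, while the paper's fits naturally with its Appendix A, which develops the separation-of-variables picture anyway for the study of self-adjointness and absolute continuity.

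One small slip worth fixing: the metric on $V_+ = B_\rho(\Qp)$ in coordinates $(r,\theta)$, $\theta\in[0,4\pi)$, is $dr^2 + r^2\,d\theta^2$, and the \emph{pullback} of the Euclidean metric $dr^2 + r^2\,d\phi^2$ under $\Psi\colon(r,\theta)\mapsto(r,\phi)=(r,\theta/2)$ is $dr^2 + \tfrac14 r^2\,d\theta^2$. What you wrote, $dr^2 + 4r^2\,d(\theta/2)^2$, is the \emph{pushforward} of the cone metric expressed in the $\phi$-coordinate. Either way the two metrics differ by a factor between $\tfrac14$ and $1$ on tangent vectors, so the $2$-bi-Lipschitz conclusion stands; only the phrasing should be corrected.
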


We now turn to scattering theory and introduce the comparison dynamics
for the scattering channels associated with the two sheets (and two
infinities) of $M$.

Let $M_0 := \R^2 \uplus \R^2 = \R^2 \times \{\low, \upp\}$ denote the
disjoint union of two copies of the Euclidean plane $\R^2$, and write
$M_{0,\low} = \R^2 \times \{\low\}$, $M_{0,\upp} = \R^2 \times
\{\upp\}$.  We then let $\HH_0 = \Lsqr {M_0,\gEucl} = \Lsqr{\R^2}
\oplus \Lsqr{\R^2}$ Moreover, we let $H_0$ denote the Laplacian on
$M_0$. To fix the notation, let $A_0$ denote the (unique) self-adjoint
extension of $-\Delta$ on $\Cci {\R^2}$. We may then write $H_0 =
H_{0,\low} \oplus H_{0,\upp}$ where $H_{0,\low}$ and $H_{0,\upp}$ act
as $A_0$ in $\Lsqr{M_{0,\low},\gEucl}$ and in $\Lsqr{M_{0,\upp}}$,
respectively.

We denote the straight line segment in $\R^2$ connecting the points
$\Qpm$ as $\Gamma$,
\begin{equation}
  \label{eq:1.5}
    \Gamma := [-1,1] \times \{0\} \subset \R^2, 
\end{equation} 
a set of measure zero.  There is a natural embedding $\iota \colon
(\R^2 \setminus \Gamma) \times \{\low, \upp\} \to M$, $\iota =
(\iota_\low, \iota_\upp)$, where $\iota_\low$ maps the point 
$((x,y), \low) \in M_{0,\low} \setminus \Gamma$ to $((x,y),\low) \in M$, 
 and similarly for $\iota_\upp$. 
 The embedding $\iota$ induces a unitary mapping $J
\colon \HH_0 \to \HH$ where $J = J_\low \oplus J_\upp$ in an obvious
manner (and with a slight abuse of notation).  $J_\low$ maps functions
$f \in \Lsqr{M_{0,\low},\gEucl}$ to the same function on the lower sheet of $M$
and extends them by zero to all of $M$, and similarly for $J_\upp$.
We then have:

\begin{proposition}
  \label{prp:1.4}
  The wave operators
  \begin{equation}
    \label{eq:1.6}
    W_\pm(H,H_0,J) =\slimpm \e^{\im tH} J \e^{-\im tH_0} 
  \end{equation} 
  exist and are unitary. 
\end{proposition}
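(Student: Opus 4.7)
The plan is to establish existence of the limits in~\eqref{eq:1.6} by Cook's method and then to derive unitarity from the unitarity of $J$ together with the pure absolute continuity of $H_0$ and $H$. Since $J^*J = I_{\HH_0}$ and $JJ^* = I_\HH$, once the reverse wave operators $W_\pm(H_0,H,J^*)$ are also known to exist as isometries, the chain rule yields
\begin{equation*}
  W_\pm(H_0,H,J^*)\,W_\pm(H,H_0,J) = W_\pm(H_0,H_0,I_{\HH_0}) = I_{\HH_0}
\end{equation*}
and the analogous identity on $\HH$, so that $W_\pm(H,H_0,J)$ is unitary with $W_\pm(H_0,H,J^*)$ as its inverse.

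For the direct direction I would take as dense set $\mathcal{D}\subset\HH_0$ the pairs $\psi = \psi_\low\oplus\psi_\upp$ with each component in $\Schwartz{\R^2}$ and with Fourier transform supported away from the origin. The obstacle to a naive Cook calculation is that $J\psi$ generally fails to lie in $\dom(H)$: the sheet-swap identification along the cut $\Gamma$ is compatible with the Sobolev space $\Sob{\MM}$ only on functions that vanish on $\Gamma$. I would therefore fix a smooth cutoff $\eta$ on $\R^2$ with $\eta=0$ in a neighborhood of $\Gamma$ and $\eta=1$ outside a slightly larger neighborhood, set $u(t) := \e^{-\im tH_0}\psi$, and split
\begin{equation*}
  \e^{\im tH}\, J\, \e^{-\im tH_0}\psi
  \;=\; \e^{\im tH}\, J\bigl(\eta\, u(t)\bigr)
   \;+\; \e^{\im tH}\, J\bigl((1-\eta)\, u(t)\bigr).
\end{equation*}

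The second summand has norm $\norm{(1-\eta)u(t)}$, which tends to $0$ as $|t|\to\infty$ by the local $\Lsqrspace$-decay of free evolution of Schwartz data with momentum separated from the origin, a direct consequence of the stationary-phase bounds recalled in Appendix B. For the first summand, $\eta\, u(t)$ vanishes near $\Gamma$ so that $J(\eta\, u(t))\in\dom(H)$, and because $J$ is a local isometry where $\eta\ne 0$ the defect collapses to
\begin{equation*}
  (HJ - JH_0)\bigl(\eta\, u(t)\bigr)
  \;=\; -J\bigl((\Delta\eta)\, u(t) + 2\,\nabla\eta\cdot\nabla u(t)\bigr),
\end{equation*}
whose support is confined to the fixed annular region on which $\nabla\eta\ne 0$. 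Appendix B then provides bounds of the form $\norm{(\Delta\eta)u(t)} + \norm{\nabla\eta\cdot\nabla u(t)}\le C_N(1+|t|)^{-N}$ for arbitrary $N$, hence the Cook integrand is integrable in $t$; this yields the strong limit of $\e^{\im tH}J(\eta u(t))$ as $t\to\pm\infty$ on $\mathcal{D}$, and a density argument extends $W_\pm(H,H_0,J)$ to all of $\HH_0$.

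The reverse wave operators $W_\pm(H_0,H,J^*)$ are treated by the symmetric argument, now with a dense set $\mathcal{D}'\subset\HH$ of smooth compactly supported functions on $M$ vanishing near $\Gamma\cup\{\Qm,\Qp\}$ and using local $\Lsqrspace$-decay of $\e^{-\im tH}f$ on bounded subsets of $M$, which is guaranteed by the pure absolute continuity of $H$ combined with the compactness statement of Lemma~\ref{lem:1.2} (any compact $K\subset M$ is contained in $M_R$ for $R$ large). The main obstacle throughout is precisely the incompatibility of $J$ with $\dom(H)$ along the cut, together with the presence of the two branch points; it is the localization via $\eta$ and the rapid-decay bounds of Appendix B that reduce matters to a spatially supported, time-integrable first-order defect to which Cook's criterion applies cleanly.
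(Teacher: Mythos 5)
Your treatment of the direct wave operators is a sensible Cook's-method argument: the cutoff $\eta$ that vanishes near $\Gamma$ repairs the domain mismatch, the defect $HJ(\eta u)-J(\eta H_0 u)$ collapses to a first-order term supported in the fixed compact annulus where $\nabla\eta\neq 0$, and for Schwartz data with Fourier transform bounded away from the origin the propagation bounds of the type recalled in Appendix~B indeed give $O((1+|t|)^{-N})$ for the Cook integrand. That half of the argument is fine in spirit.

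The genuine gap is in the reverse direction, i.e.\ completeness. You propose to run ``the symmetric argument'' for $W_\pm(H_0,H,J^*)$ by Cook's method, invoking ``local $\Lsqrspace$-decay of $\e^{-\im tH}f$ on bounded subsets'' as a consequence of pure absolute continuity of $H$ together with the Rellich compactness of \Lem{1.2}. But absolute continuity plus local compactness gives only \emph{qualitative} decay of $\norm{\chi_R\,\e^{-\im tH}f}$ --- this is the RAGE theorem, and it yields convergence to zero (in the mean, or along sequences), without any rate. Cook's criterion, by contrast, needs an \emph{integrable} rate of decay of the defect term $\nabla\eta\cdot\nabla(\e^{-\im tH}f)$ plus $(\Delta\eta)\,\e^{-\im tH}f$ over $t\in\R$, and for the perturbed group $\e^{-\im tH}$ no such quantitative estimate is available a priori (the stationary-phase machinery of Appendix~B applies only to the explicitly diagonalizable free group $\e^{-\im tA_0}$). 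Moreover, the chain-rule step you outline in the first paragraph presupposes that $W_\pm(H_0,H,J^*)$ already exist as strong limits; that is precisely the content of completeness and does not follow from the mere existence and isometry of the direct wave operators.

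The paper circumvents this entirely by Birman--Deift--Simon decoupling: Dirichlet barriers are inserted on circles of radius $2$, reducing the problem to three intermediate wave-operator statements, namely $W_\pm(H_{0,\dec},H_0)$, $W_\pm(H,H_\dec)$, and $W_\pm(H_\dec,H_{0,\dec},J)$. The first two are handled by trace-class (Kato--Birman) methods for the barrier perturbation --- which give existence \emph{and} completeness in one stroke, with no need for any quantitative propagation estimates for $\e^{-\im tH}$ --- while the third is trivial because the decoupled exteriors of $H_\dec$ and $H_{0,\dec}$ coincide and the interior pieces have compact resolvent. Your Cook argument would still be a reasonable alternative for \emph{existence} of the direct wave operators, but for unitarity you should either adopt the decoupling scheme, or replace the ``symmetric Cook'' step by a genuine completeness argument such as Enss' method or a trace-class comparison.
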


\begin{remark} 
  As is often the case in two Hilbert space
    scattering~\cite{kato:67, reed-simon-3}, there is a certain
    arbitrariness in the choice of the mapping $J$. 
    By local compactness, the same wave
    operators and the same results would be obtained with $J$ replaced
    by $(1 - \chi_R)J$, for some $R > 0$, or by $(1 - \phi)J$
    with an  arbitrary $\phi \in \Cci{\R^2}$. 
 \end{remark}

\begin{proof}[Proof of \Prp{1.4}]
  We decouple both $H$ and $H_0$ by Dirichlet boundary conditions
  along two circles defined as follows. Let $\calC_2 := \set{(x,y) \in
    \R^2}{x^2 + y^2 = 4}$, $\calC_2' := \calC_2 \times \{\low,\upp\}
  \subset M_0$, and $\calC_2'' := \iota( \calC_2') \subset
  M$. Introducing Dirichlet boundary conditions on $\calC_2'$ and on
  $\calC_2''$ decomposes $H_0$ into a direct sum of four operators
  while $H$ is decomposed into a direct sum of three operators. More
  precisely, we introduce the following three ``building blocks:'' in
  the plane $\R^2$, we have the Dirichlet Laplacian $h_\interior$ on
  the disc of radius $2$ and the Dirichlet Laplacian $h_\ext$ on the
  exterior of this disc.  Furthermore, defining
  \begin{subequations}
    \label{eq:1.7}
    \begin{align}
      \label{eq:1.7a}
      M_{0,\ext} 
      &:= \set{ (x,y) \in \R^2} {x^2 + y^2 > 4} \times \{\low, \upp\},\\
      \label{eq:1.7b}
      M_\ext 
      & := \iota(M_{0,\ext)}), 
      \quad M_\interior := M \setminus \clo M _\ext
    \end{align}
  \end{subequations}
  we denote by $H_\interior$ the Dirichlet Laplacian of
  $M_\interior$. Note that $M_\interior$ is a branched covering with
  two sheets of the punctured disc $\set{(x,y) \in\R^2}{x^2 + y^2
    < 4} \setminus\{q_+, q_-\}$.  We then write
  \begin{subequations}
    \label{eq:1.8}
    \begin{align}
      \label{eq:1.8a}
      H_{0,\dec}
      & := (h_\interior , \low) \oplus (h_\ext, \low) 
        \oplus  (h_\interior , \upp) \oplus (h_\ext, \upp),\\
      \label{eq:1.8b}
      H_{\dec} 
      & := H_\interior \oplus (h_\ext , \low) \oplus (h_\ext , \upp);
    \end{align}
  \end{subequations}
  note that $h_\ext$ is purely absolutely continuous while
  $h_\interior$ and $H_\interior$ (by \Lem{1.2}) have compact
  resolvent.

  It is well-known (\cite{birman:63, deift-simon:76,
    hpw:14}) that the wave operators
  \begin{equation}
    \label{eq:1.9}
    W_\pm(H_{0,\dec},H_0) =\slimpm  \e^{\im tH_{0,\dec}}  \e^{-\im tH_0} 
  \end{equation}
  exist, are complete, and isometric with initial subspace
  $\HH_\ac(H_0) = \HH_0$ and final subspace $\HH_\ac(H_{0,\dec}) =
  \Lsqr{M_{0,\ext},\gEucl}$.
 Similarly, it can be shown by standard methods (cf.~\cite{deift-simon:76, 
  hempel-weder:93, hpw:14}), that the wave operators
  \begin{equation}
    \label{eq:1.10}
    W_\pm(H, H_\dec) =\slimpm \e^{\im tH} \e^{-\im tH_\dec} P_\ac(H_\dec)  
  \end{equation}
  exist, are complete, and partially isometric with initial subspace
  $\HH_\ac(H_\dec) = \Lsqr{M_\ext,\gEucl} = \HH_\ac(H_{0,\dec}) =
  \Lsqr{M_{0,\ext},\gEucl}$ and final subspace $\HH_\ac(H) = \Lsqr {M,\gEucl}$.
  Finally, the wave operators
  \begin{equation}
    \label{eq:1.11}
    W_\pm(H_\dec, H_{0,\dec},J)
    =\slimpm \e^{\im t H_\dec} J \e^{-\im t H_{0,\dec} } P_\ac(H_{0,\dec})
  \end{equation}
  simply act as the identity on $\Lsqr{M_\ext,\gEucl}$, and as the zero
  operator on $\Lsqr{M_\interior,\gEucl}$.  Therefore, they exist and are
  complete.
 It is now clear that the wave operators $W_\pm(H,H_0,J)$ exist and are unitary. 
\end{proof}

With $J = J_\low \oplus J_\upp$ and $H_{0,\low}$ as defined above, we
furthermore see that the \emph{channel wave operators}
\begin{equation}
  \label{eq:1.12}
  W_\pm(H, H_{0,\low}, J_\low)
  = \slimpm \e^{\im tH} J_\low \e^{-\im tH_{0,\low}}
\end{equation}
(and, analogously, $W_\pm(H, H_{0,\upp}, J_\upp$) exist and are isometric  with 
\begin{equation}
  \label{eq:1.13}
  \ran W_\pm(H, H_{0,\low}, J_\low) \oplus \ran W_\pm(H, H_{0,\upp}, J_\upp)
  =  \ran W_\pm(H, H_0, J)
  = \HH_\ac(H); 
\end{equation} 
recall that $f \in \ran W_+(H, H_{0,\low}, J_\low)$ means that there
exists $g \in \Lsqr{M_{0,\low},\gEucl}$ such that
\begin{equation}
  \label{eq:1.14}
  \norm{\e^{-\im t H} f - J_\low \e^{-\im t H_{0,\low}} g} \to 0, 
  \qquad t \to \infty; 
\end{equation} 
in particular, $\e^{-\im t H} f$ is asymptotically on the lower sheet
for $t \to \infty$.  Eqn.~\eqref{eq:1.13} establishes two orthogonal
decompositions of $\HH_\ac(H) = \Lsqr {M,\gEucl}$, one for the plus-sign
and another one for the minus-sign. We will see later on
(cf.~\Lem{1.7.2nd.part}) that these two decompositions are in fact
different.

\begin{remark}
  \label{rem:1.5}
  Let us note that $H_0 = H_{0,\low} \oplus H_{0,\upp}$ provides a
  reference operator for $H$ in the sense of~\cite[Def.~4.7]{hpw:14}
  with two channels.  Strictly speaking, branch points like $\Qpm$ are
  not directly included in the framework used
  in~\cite{hpw:14}. However, this technical difficulty is easy to
  resolve: we might just take each of the sets $B_{1/2}(\Qpm)$ as an
  end, albeit an end which does not participate in the scattering
  process since the Dirichlet Laplacian of $B_{1/2}(\Qpm)$ has compact
  resolvent by \Lem{1.2}. The possibility of allowing such ``dead
  ends'' is described in Remark~4.4 of~\cite{hpw:14}. We thus have
  (formally) a manifold with 4 ends, with two ends given by a copy of
  $\R^2 \setminus B_2(0)$ and another two ends given by
  $B_{1/2}(\Qpm)$.
\end{remark}
\medskip

It is a major goal in scattering theory to obtain information on the
\emph{scattering operator}
\begin{equation}
   \label{eq:scattering-operator}
  S = S(H,H_0,J) 
  := \bigl(W_+(H,H_0,J)\bigr)^*\circ W_-(H,H_0,J) \colon \HH_0 \to \HH_0, 
\end{equation}
a unitary operator, and the closely related \emph{scattering matrix}
$(S_{ij})_{i,j \in \{\low, \upp\}}$, with
\begin{equation}
  \label{eq:scattering-matrix}
  S_{ij} 
  := \bigl(W_+(H, H_{0,i}, J_i)\bigr)^* \circ W_-(H, H_{0,j}, J_j)
                           \colon \Lsqr{M_{0,j},\gEucl} \to \Lsqr{M_{0,i},\gEucl},
\end{equation}
for $i, j \in \{\low, \upp\}$. We will show that the four components
of $(S_{ij})$ are non-zero which yields the openness of all scattering
channels. 

The following lemma establishes the existence of a state $w_0$ for
which $\e^{-\im tH}w_0$ is asymptotically in the lower sheet for $t
\to -\infty$ and in the upper sheet for $t \to +\infty$, up to small
errors. Recall that $A_0$ denotes the self-adjoint extension of the Laplacian
on $\R^2$. We then have:

\begin{lemma}
  \label{lem:1.7}
  For $\eps > 0$ given, there exist $w_0 \in \Lsqr {M,\gEucl} \cap
  \Contspace[\infty](M)$, $v_0 \in \Schwartz{\R^2}$, and $t_0 \ge 0$
  such that the following estimates hold:
  \begin{equation}
    \label{eq:lem.1.7.1}
    \norm{ \e^{-\im tH} w_0 - J_\upp \e^{-\im t A_0} v_0} < \eps \norm{w_0}, 
    \qquad t \ge t_0, 
  \end{equation}
  and 
  \begin{equation}
    \label{eq:lem.1.7.2}
    \norm{ \e^{-\im tH} w_0 - J_\low \e^{-\im t A_0} v_0} < \eps \norm{w_0}, 
    \qquad  t \le -t_0.
  \end{equation}
\end{lemma}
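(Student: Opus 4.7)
The plan is to construct $w_0$ by transplanting a free, high-momentum Schwartz wave packet on $\R^2$ onto $M$ through a local isometry, exploiting the fact that a straight line crossing the slit $\Gamma$ between the branch points $\Qpm$ switches sheets of $M$. First I fix $v_0 \in \Schwartz{\R^2}$ with Fourier transform $\hat v_0$ supported in a tight neighborhood of a high-momentum point $(0,k_0)$ with $k_0 > 0$; then $u(s) := \e^{-\im s A_0} v_0$ has classical centre $(0,2k_0 s)$ and, by the stationary phase estimates of \App{stationary-phase}, remains localised in an arbitrarily narrow double cone about the $y$-axis with rapidly decaying tails. By tuning the position and momentum spreads of $v_0$, I arrange that for every $s \in \R$ the $\eps$-essential support of $u(s)$ lies in an inner tube $T' := \{(x,y) : |x| < \alpha + \alpha |y|\}$ with $\alpha > 0$ small enough that $T'$ stays away from $\Qpm = (\pm 1, 0)$.

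I next take a slightly larger open tube $T := \{(x,y) : |x| < \beta + \beta|y|\}$ with $\alpha < \beta < 1$ still small enough that $T$ also avoids $\Qpm$. Being star-shaped with respect to the origin, $T$ is simply connected; since it meets the cut $\Gamma$ only in $\{(x,0) : |x| < \beta\}$ and is disjoint from $\{\Qpm\}$, it admits a natural isometric lift
\[
 \xi \colon T \longrightarrow \tilde T \subset M
\]
sending $T \cap \{y<0\}$ into the lower sheet and $T \cap \{y>0\}$ into the upper sheet, with the two halves smoothly joined across $\Gamma$ according to the sheet identification on $M$. I pick a smooth cutoff $\chi \in \Contspace[\infty](\R^2)$ supported in $T$, equal to $1$ on $T'$ and vanishing near $\partial T$, and define
\[
 \tilde u(s) := (\chi u(s)) \circ \xi^{-1}, \qquad w_0 := \tilde u(0),
\]
both extended by $0$ outside $\tilde T$. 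Because $\chi$ vanishes near $\partial T$, the function $\tilde u(s)$ is smooth on $M$ and lies in $\Lsqr M \cap \Contspace[\infty](M) \cap \dom(H)$.

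Since $\xi$ is an isometry and $H$ acts on functions supported in $\tilde T$ as the ordinary Laplacian under this isometry, a direct computation gives $(\im\partial_s - H)\tilde u(s) = \bigl(2\nabla\chi\cdot\nabla u(s) + (\Delta\chi)u(s)\bigr) \circ \xi^{-1}$, and Duhamel's principle then yields
\[
 \norm{\e^{-\im tH}w_0 - \tilde u(t)}
 \leq \int_0^{|t|} \norm[\Lsqr{\R^2}]{2\nabla\chi\cdot\nabla u(s) + (\Delta\chi)u(s)} \dd s.
\]
Since $\nabla\chi$ and $\Delta\chi$ are supported in $T \setminus T'$, i.e. outside the cone containing the classical propagation of $u(s)$, the bounds of \App{stationary-phase} force the integrand to decay faster than any inverse polynomial in $\langle s\rangle$, so the total integral can be made smaller than $\eps \norm{w_0}/2$ by appropriate choice of parameters. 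For $t \geq t_0$ large, the classical centre $(0,2k_0 t)$ lies deep in $T' \cap \{y>0\}$ and the same stationary phase bounds control $\norm[\Lsqr{\R^2}]{u(t)\,\1_{\R^2 \setminus (T' \cap \{y>0\})}}$; comparing $\tilde u(t)$ sheet-by-sheet with $J_\upp u(t)$ then yields $\norm{\tilde u(t) - J_\upp u(t)} < \eps\norm{w_0}/2$, and the symmetric estimate gives $\norm{\tilde u(t) - J_\low u(t)} < \eps\norm{w_0}/2$ for $t \leq -t_0$. Combining these with the Duhamel bound via the triangle inequality establishes \eqref{eq:lem.1.7.1} and \eqref{eq:lem.1.7.2}. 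The main obstacle is the simultaneous quantitative coupling of the wave-packet parameters $(k_0, \Delta x_0, \Delta k)$ with the geometry of the cutoff $\chi$: the three error sources --- the tails of $u(s)$ outside the cone, the Duhamel integrand, and the ``wrong-sheet'' mass at $|t| = t_0$ --- must all be controlled together, which is precisely the quantitative purpose of the stationary phase estimates in \App{stationary-phase}.
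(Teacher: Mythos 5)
Your proposal is correct and takes essentially the same route as the paper's proof: a high-momentum Schwartz packet localised (via stationary phase, Appendix B) to a double cone through the slit, cut off to that cone, lifted to $M$ by the natural local isometry, with the cutoff commutator error controlled through Duhamel's formula and the wrong-sheet mass at large $|t|$ controlled by the same localisation bounds. The differences (linear tubes $T'\subset T$ in place of the mollified cone $\XX$, a generic high-momentum packet in place of the explicit tensor product $\psi_1\otimes\psi_2$, and the notation $\xi$ for the lift $\wt J$) are cosmetic and do not change the argument.
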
 

In the proof of \Lem{1.7} we basically construct a state $v_0 \in
\Lsqr{\R^2}$ which passes at high speed between the points $\Qpm$
under the evolution determined by $\e^{-\im tA_0}$ (up to small
errors) and whose spreading can be controlled by stationary phase
estimates, for $|t|$ large.  Note that we have complete control of the
unitary group $(\e^{-\im t A_0} \> ; \> t \in \R)$, acting in
$\Lsqr{\R^2}$, while we know much less about $(\e^{-\im tH} \> ; \> t
\in \R)$, acting in $\Lsqr {M,\gEucl}$.  By a simple lifting, $v_0$ is
transformed into a function $w_0$ on $M$.  Here we wish to gain
information on the evolution of $\e^{-\im t H} w_0$ from the
properties of $\e^{-\im t A_0} v_0$ using the fact that both operators
act locally as the Laplacian.

Recall that $H_{0,\upp}$ and $H_{0,\low}$ denote the self-adjoint
Laplacian in $\Lsqr{M_{0,\upp},\gEucl}$ and in $\Lsqr{M_{0,\low},\gEucl}$,
respectively.  We let $\FF$ denote the Fourier transform on the
Schwartz spaces $\Schwartz{\R^d}$ for $d \in \N$.  It is well known
that $\FF$ acts bijectively on $\Schwartz{\R^d}$ and extends to a
unitary map $\FF \colon \Lsqr{\R^d} \to \Lsqr{\R^d}$.


Our construction starts with a function $u_0 \in \Schwartz{\R^2}$ of
the form $u_0 = u_0(x,y)$, given as the product of two functions
$\psi_1= \psi_1(x)$ and $\psi_2 = \psi_2(y)$ enjoying certain
properties, which we describe now.

Let $\eps \in (0,1)$ be given and let $\eps' := \eps/5$.  We first
pick a function $\phi_1 \in \Cci \R$ of norm 1 and we let $\psi_1 :=
\FF^{-1} \phi_1 \in \Schwartz \R$ where we assume that
\begin{equation}
  \label{eq:1.18}
  \norm{\chi_{(-\frac14,\frac14)}\psi_1} > 1 -\eps'.
\end{equation}   
We let $a = a_\eps > 0$ be such that $\supp \phi_1 \subset (-a,a)$.
Next, let $\phi_2 \in C_c^\infty (0,1)$, of norm 1 again, and let
$\psi_2 := \FF^{-1} [ \phi_2(.-s)] \in \Schwartz \R$, where $s > 0$
will be chosen later.  Let
\begin{equation}
  \label{eq:1.19}
  u_0 = u_0(x,y) = \psi_1(x) \psi_2(y), 
  \qquad (x,y) \in \R^2.  
\end{equation}
Then $u_0 \in \Schwartz{\R^2} \subset \dom(A_0)$ and $u(t) := \e^{-\im
  tA_0} u_0$ is a classical solution of the initial value problem for
the Schr\"odinger equation in $\Lsqr{\R^2}$, i.e.,
\begin{equation*}
  \dot u(t)
  = -\im A_0 u(t) \quadtext{for} t \in (0,\infty),
  \qquad u(0) = u_0.        
\end{equation*}
We write
\begin{equation*}
  Q_{s,t} := 
  \begin{cases} 
    (-st, st) \times (st, \infty), &  t > 0, \\
    (st, -st) \times (-\infty, st), & t < 0,
  \end{cases}
\end{equation*} 
for $s > 0$, and we let $\chi_{s,t}$ denote the characteristic
function of $Q_{s,t}$.  \Lem{a.2} implies that for any $m \in \N$
there exists a constant $\wt{c}_m \ge 0$ such that
\begin{equation*}
  \norm{(1 - \chi_{s,t}) \e^{-\im t A_0} u_0}  \le   \wt{c}_m (1 + st)^{1 - 2m}, 
  \qquad s \ge 2a, \quad t > 0, 
\end{equation*}
so that for $s \ge 2a$ and $t$ large, $t \ge t_0$ say,  
\begin{equation} 
  \label{eq:1.19a}
  \norm{( 1 - \chi_{s,t}) \e^{-\im t A_0} u_0} \le \eps'. 
\end{equation}
\medskip

Now let
\begin{equation}
  \label{eq:1.20}
  \Omega := \bigset{(x,y) \in \R^2}{ \abs x < 1/2 + \abs y}, 
\end{equation}
let $\chi_\Omega$ denote the characteristic function of $\Omega$,
and, finally,
\begin{equation}
  \label{eq:1.21}
  \chi := j_{\frac14} * \chi_\Omega, 
\end{equation}
where $(j_\delta)_{\delta > 0}$ is the kernel of the usual Friedrichs
mollifier on $\R^2$; in particular, $0 \le j_\delta \in \Cci {\R^2}$
with support in the closed disc of radius $\delta$, and $\int j_\delta
= 1$.  Also let $\XX$ denote the support of $\chi$ and $X$ the
characteristic function of $\XX$, i.e, $X = \chi_\XX$. Note that
$\chi$ is independent of $t$.

\begin{figure}[h]
  \centering
    \begin{picture}(60,60)
      \includegraphics{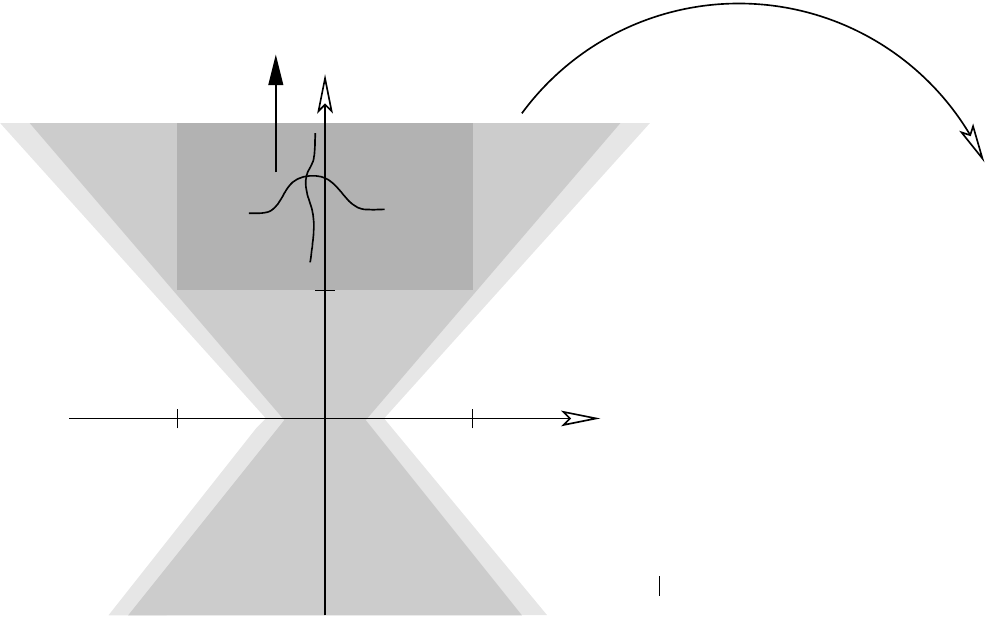}
      \put(-54,16){$st$}
      \put(-85,16){$-st$}
      \put(-65,34){$st$}
      \put(-89,44){$u(t),v(t)$}
      \put(-81,35){$Q_{s,t}$}
      \put(-82,2){$\Omega$}
      \put(-53,2){$\XX$}
      \put(-42,9){$\R^2$}
      \put(-39,19){$x$}
      \put(-66,55){$y$}
      \put(-29,58){$j$}
    \end{picture}
    \begin{picture}(90,60)
      \includegraphics{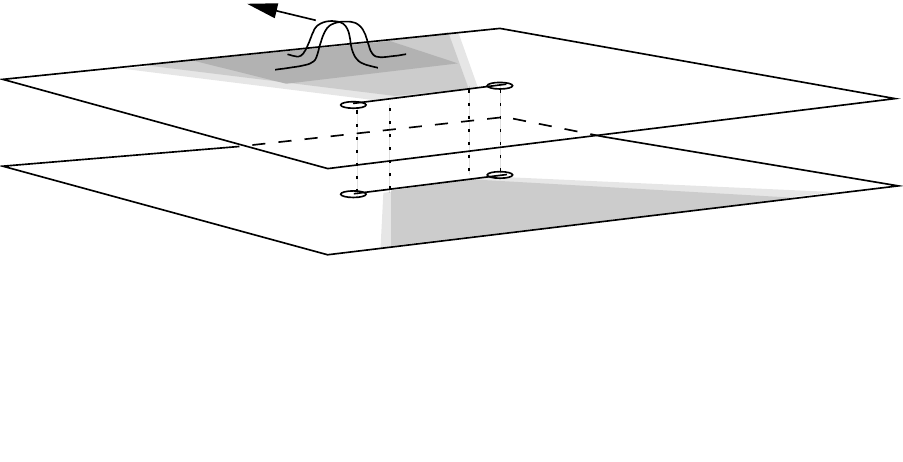}
      \put(-3,39){$M_\upp$}
      \put(-3,23){$M_\low$}
      \put(-23,18){$M$}
      \put(-37,25){$\XX'$}
      \put(-64,47){$w(t)$}
    \end{picture}
    \caption{\emph{Left:} the wave packet $u_0$ at time $0$ has speed
      $s$ in $y$-direction and is concentrated in $x$-direction near
      $x=0$; the support of the wave packet $u(t)=\e^{\im A_0 u_0}$ at
      time $t>0$ is essentially contained in the dark grey area
      $Q_{s,t}$.  Moreover, when considering the time evolution $v(t)$
      of the initial state $v_0= \chi u_0$ (with a cut-off function
      $\chi$ defined as a smooth version of the indicator function
      $\chi_\Omega$) with support in $\XX$, the deviation from $u(t)$
      is small.  \emph{Right:} the corresponding sets and the wave
      packet $w(t)$ corresponding to $v(t)$ on $M$.  The initial state
      here is $w_0=w(0)$.}
\label{fig:3}
\end{figure}
We next consider $v_0 := \chi u_0$ and observe that the (smooth)
function $v := \chi u$ is a solution of the
inhomogeneous initial value problem
\begin{equation}
  \label{eq:1.22}
  \dot v(t) = - \im A_0 v(t) +  f(t), 
  \qquad v(0) = v_0, 
\end{equation}
with $f = f(t) = f(x,y;t)$ given by
\begin{equation}
  \label{eq:1.23}
  f = - 2\im \nabla\chi \cdot \nabla u - \im u \Delta \chi.  
\end{equation}
We also have $\norm{v_0 - u_0} < \eps'$ and $\norm{v_0} > 1 - \eps'$.
Stationary phase estimates (cf.\ \Lem{a.3} in the appendix) imply that
there exists $s_0 \ge 0$ such that
\begin{equation}
  \label{eq:1.24}
  \int_{-\infty}^\infty \norm{f(\tau)} \dd \tau < \eps', 
  \qquad s \ge s_0. 
\end{equation} 
The solution $v = v(t)$ of eqn.~\eqref{eq:1.22} can be written as
\begin{equation}
  \label{eq:1.25}
  v(t) = \e^{-\im t A_0} v_0 + \int_0^t \e^{\im (t-\tau) A_0} f(\tau) \dd \tau.  
\end{equation} 
Notice that there is no reason to expect that for $t \ne 0$ the
individual terms $\e^{-\im t A_0} v_0$ and $\int_0^t \e^{\im (t-\tau)
  A_0} f(\tau) \dd \tau$ on the right-hand side of~\eqref{eq:1.25}
should vanish outside of $\XX$; it is only the sum of the two terms
which has support contained in $\XX$.  It is immediate from
eqn.~\eqref{eq:1.19a}, $\norm{u_0 - v_0} < \eps'$, and $\norm{u_0} =
1$ that
\begin{equation}
  \label{eq:1.26}
  \norm{(1 - \chi_{s,t}) \e^{-\im t A_0} v_0} \le 2 \eps',  
  \qquad \norm{\chi_{s,t} \e^{-\im t A_0} v_0} \ge 1 - 2\eps'. 
\end{equation}

We have now gathered all the information we need on $\e^{-itA_0} v_0$
and are ready for the proof of \Lem{1.7}.

\begin{proof}[Proof of \Lem{1.7}] 
  \begin{myenumerate}{\roman}
  \item
    In order to make the transition from $\R^2$ to $M$ we define a map
    $j \colon \XX \to M$ which assigns to $(x,y) \in \XX$ the point
    $((x,y),\low) \in M$ for $y < 0$, and the point $((x,y),\upp) \in
    M$ for $y > 0$.  The points in $\XX$ with $y = 0$ are mapped to
    the line segment where the lower and the upper sheets of $M$ are
    connected as we move in the direction of increasing values of
    $y$. Let $\XX' := j(\XX)$. For functions $\eta \colon \XX \to \C$,
    we obtain a lifting $\wt J \eta \colon \XX' \to \C$ defined by
    \begin{equation}
      \label{eq:1.27}
      (\wt J\eta)(j(x,y)) := \eta(x,y), 
      \qquad (x,y) \in \XX. 
    \end{equation} 
    We may extend $\wt J \eta$ by zero to all of $M$.  Obviously, we
    have $ w(t) := \wt J v(t) \in \dom(H)$ for $t > 0$ and $H (w(t)) =
    \wt J A_0(v(t))$.  Hence $w$ is a classical solution in $\Lsqr
    \MM$ of the initial value problem
    \begin{equation}
      \label{eq:1.28}
      \dot w(t) = -\im H w(t) + \wt Jf(t),
      \qquad w(0) = \wt Jv_0,  
    \end{equation} 
    so that
    \begin{equation}
      \label{eq:1.29}
      w(t)
      = \e^{-\im tH} \wt Jv_0 
           + \int_0^t \e^{\im (t-\tau)H} \wt J f(\tau) \dd \tau. 
    \end{equation}
    We conclude from eqns.~\eqref{eq:1.25} and~\eqref{eq:1.29} that
    \begin{align}
      \nonumber
      0 & = w(t) - \wt Jv(t)\\
      \label{eq:1.31}
        & = e^{-\im tH} \wt Jv_0 
           + \int_0^t \e^{\im (t-\tau)H} \wt Jf(\tau) \dd \tau 
           - \wt JX \e^{-\im t A_0} v_0 
           - \wt J X \int_0^t \e^{\im(t-\tau) A_0} f(\tau) \dd \tau,
    \end{align}
    whence
    \begin{equation}
      \label{eq:1.32}
      \bignorm[\Lsqr \MM] {\e^{-\im tH} \wt Jv_0 - \wt J X \e^{-it A_0} v_0}
      \le 2 \int_0^t \norm{f(s)} \dd s < 2\eps'.  
    \end{equation}
    We finally define $w_0 := \wt Jv_0$ and note that
    $\norm{w_0} > 1 - \eps'$.

  \item 
    We now prove eqn.~\eqref{eq:lem.1.7.1}.  Combining~\eqref{eq:1.32}
    and~\eqref{eq:1.26} we see that
    \begin{align*} 
      \norm{\e^{-\im tH} w_0 - J_u \e^{-it A_0} v_0} 
      & \le \norm{\e^{-\im tH} w_0 - \wt J X \e^{-it A_0} v_0} 
        + \norm{(\wt J X - J_u) \e^{-it A_0} v_0} \\
      & \le 2 \eps' + \norm{(1 - \chi_{s,t}) \e^{-it A_0} v_0} \\ 
      & \le 4 \eps' < \frac {4 \eps'} {1 - \eps'} \norm{w_0}
        < \eps \norm{w_0}, 
    \end{align*}
    since $(\wt J X - J_u) \chi_{s,t} \e^{-it A_0} v_0 = 0$ for $t >
    0$, $0 < \eps < 1$, and $\eps' = \eps/5$.

    The proof of~\eqref{eq:lem.1.7.2} is similar and omitted.\qedhere
  \end{myenumerate} 
\end{proof}

It is now easy to prove the main result of this section. 
\begin{theorem}
  \label{thm:1.9}
  The entries of the scattering matrix $(S_{ij})_{i,j \in \{\low,
    \upp\}}$, as defined in Eqn.~\eqref{eq:scattering-matrix}, are all non-zero
  operators.
\end{theorem}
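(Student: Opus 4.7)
The plan is to leverage \Lem{1.7} for the off-diagonal entries $S_{\upp,\low}$ and $S_{\low,\upp}$, and to adapt its proof for the diagonal entries $S_{\upp,\upp}$ and $S_{\low,\low}$. In each case the idea is to produce an explicit test state that witnesses a non-vanishing matrix coefficient. A useful preliminary observation is the natural sheet-swap involution of $\MM$, a smooth isometry of $(M,\gEucl)$ commuting with $H$ (and intertwining $J_\low \leftrightarrow J_\upp$), which will reduce the four entries to two cases.

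\textbf{Off-diagonal case.} Fix small $\eps \in (0,1)$ and let $w_0, v_0$ be as in \Lem{1.7}. Applying $\e^{\im tH}$ to \eqref{eq:lem.1.7.1} and passing to the limit $t \to +\infty$ (and similarly for \eqref{eq:lem.1.7.2} with $t \to -\infty$) gives
\[
  \bignorm{W_+(H,H_{0,\upp},J_\upp) v_0 - w_0} \le \eps \norm{w_0}
  \quadtext{and}
  \bignorm{W_-(H,H_{0,\low},J_\low) v_0 - w_0} \le \eps \norm{w_0}.
\]
Since $W_+(H,H_{0,\upp},J_\upp)$ is isometric, applying its adjoint to the difference yields
\[
  \bignorm{S_{\upp,\low} v_0 - v_0} \le 2\eps \norm{w_0}.
\]
By the construction in the proof of \Lem{1.7}, $\norm{v_0}$ and $\norm{w_0}$ are both close to $1$, so for $\eps$ sufficiently small $S_{\upp,\low} v_0 \ne 0$, whence $S_{\upp,\low} \ne 0$. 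The entry $S_{\low,\upp}$ then follows by the sheet-swap involution (or by running the same construction with the roles of the sheets interchanged).

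\textbf{Diagonal case.} For $S_{\upp,\upp}$ I will adapt the construction in \Lem{1.7} so that the wave packet remains on the upper sheet throughout its evolution. The key change is to replace $u_0(x,y) = \psi_1(x)\psi_2(y)$ by $u_0(x,y) = \psi_1(x-c)\psi_2(y)$ for some $c > 1$ sufficiently large, keeping $\psi_1, \psi_2$ otherwise as in the proof of \Lem{1.7}. Stationary phase estimates analogous to \Lem{a.3} show that $\e^{-\im tA_0} u_0$ is essentially supported in a tubular neighborhood of the classical trajectory $\{(c, 2st) : t \in \R\}$; provided the momentum spread of $\psi_1$ is kept sufficiently small and $s$ is large enough relative to $c$, this neighborhood avoids $\Gamma = [-1,1]\times\{0\}$ uniformly in $t \in \R$. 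Running the Duhamel argument of \Lem{1.7} with $\Omega$ replaced by a translated region disjoint from $\Gamma$, and with $J_\upp$ used as the lift for both signs of $t$, produces $\tilde w_0 \in \HH$ and $\tilde v_0 \in \Schwartz{\R^2}$ such that
\[
  \bignorm{\e^{-\im tH} \tilde w_0 - J_\upp \e^{-\im tA_0} \tilde v_0} < \eps \norm{\tilde w_0}, \qquad \abs{t} \ge t_0,
\]
for \emph{both} signs of $t$. The same computation as in the off-diagonal case then gives $\bignorm{S_{\upp,\upp} \tilde v_0 - \tilde v_0} \le 2\eps \norm{\tilde w_0}$ and hence $S_{\upp,\upp} \ne 0$. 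The entry $S_{\low,\low}$ follows by the sheet-swap symmetry.

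\textbf{Main obstacle.} The hard part will be the diagonal case, specifically verifying that the modified wave packet's support avoids $\Gamma$ for \emph{all} $t \in \R$. The underlying stationary phase techniques are those of \App{stationary-phase}, but they must now be applied to a spatially translated packet, and one must check that the $x$-spread of the packet stays strictly less than $c - 1$ during the bounded time window in which the packet crosses the plane $\{y = 0\}$. For $\abs{t}$ large the packet is far from $\Gamma$ in the $y$-direction regardless of its $x$-extent, so the real work is a small-time estimate entirely analogous to, and no harder than, the one already carried out in the proof of \Lem{1.7}.
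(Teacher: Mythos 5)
Your proof is correct and follows the paper's own argument closely: the off-diagonal entries come directly from \Lem{1.7} (your norm bound $\norm{S_{\upp\low}v_0 - v_0} \le 2\eps\norm{w_0}$ and the paper's inner-product estimate $\abs{\scapro{S_{\low\upp}v_0}{v_0} - 1} \le 3\eps$ are two ways of stating the same fact), and the diagonal entries are handled by translating $\psi_1$, $\Omega$, $\chi$, $\XX$ to avoid the slit, which is precisely the paper's remark in part (ii) of its proof. One clarification regarding your ``main obstacle'': the anticipated difficulty does not actually arise, because the Duhamel/cut-off argument never requires the free packet $\e^{-\im tA_0}u_0$ to avoid $\Gamma$ (it cannot, since free evolution instantaneously spreads support over all of $\R^2$); what must avoid $\Gamma$ is the \emph{fixed} translated cone $\XX$, and then $v(t)=\chi\,\e^{-\im tA_0}u_0$ is supported in $\XX$ by construction, with the Duhamel error controlled by the same translation-invariant stationary-phase estimates as in \Lem{a.3}.
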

\medskip
\begin{proof}
  \begin{myenumerate}{\roman}
  \item 
    We first show that the operator $S_{\low\upp}$ is non-zero.  Let
    $0 < \eps < 1/4$ and let $v_0$ and $w_0$ be as in
    \Lem{1.7}. Without loss of generality we may assume, in addition,
    that $\norm{w_0} = \norm{v_0} = 1$. Then
    \begin{equation*}
      \scapro{S_{\low\upp} v_0}{v_0} 
      = \scapro{W_-(H,H_{0,\low}, J_\low) v_0}{W_+(H,H_{0,\upp}, J_\upp) v_0}
    \end{equation*} 
    where, by \Lem{1.7},
    \begin{align*}
      \norm{W_-(H,H_{0,\low}, J_\low) v_0 - w_0} < \eps, \qquad
      \norm{W_+(H,H_{0,\upp}, J_\upp) v_0 - w_0} < \eps.
    \end{align*}
    It now follows that $ |\scapro{S_{\low\upp} v_0}{v_0} - 1| \le 3
    \eps < 3/4$. This shows that $S_{\low\upp}$ is non-zero; but
    then, by symmetry, we also have $S_{\upp\low} \ne 0$.

  \item 
    In order to show that $S_{\low\low}$ (and, analogously,
    $S_{\upp\upp}$) is non-zero, it is enough to construct wave
    packets which come in on the lower sheet (limit $t \to -\infty$)
    and which go out on the lower sheet as well (limit $t \to
    +\infty$), up to a small error. It is easy to modify $v_0$ and
    $w_0$ as in \Lem{1.7} to achieve this goal; cf.\ also \Rem{1.10}
    below.  E.g., we may replace the function $\psi_1$ in the proof
    of \Lem{1.7} with $\psi_1(\cdot - k)$ with $\abs k > 1$ so that
    the associated wave packet is located away from the slit at time
    $t = 0$.  We then translate $\Omega$, $\chi$, and $\XX$ in the
    $x$-direction accordingly.  The maps $j$ and $\wt J$ can be simply
    defined as an embedding of $\XX$ into $M_{0,\low}$. We leave the
    details to the reader. \qedhere
  \end{myenumerate}
\end{proof} 

\begin{remark}
  \label{rem:strongly.open}
  In fact, what we obtain here is a particularly strong version of
  openness of the channels in the sense that the norm of the wave
  packet going out on one sheet is close to the norm of the incoming
  state on the other sheet, for suitably chosen states.  For example,
  for any $\eps > 0$ there are states where the norm of the outgoing
  wave packet on the upper sheet is greater than $(1 - \eps)$ times
  the norm of what is coming in on the lower sheet, etc. One might say
  then that the channels are \emph{strongly open}.
\end{remark}

\begin{remark}
  \label{rem:1.10} 
  In dealing with $S_{\low\low}$ we might as well exchange the
  variables $x$ and $y$ and translate in the $y$-direction to avoid
  the slit. In the end, all one needs is a rigid motion of $\XX$ which
  avoids the slit and one gets the impression that ``most'' initial
  states will belong to the range of $S_{\low\low}$ or $S_{\upp\upp}$
  while only a tiny fraction of initial states communicates between
  the two sheets under the evolution $\e^{-\im tH}$. Thus, if one
  wishes to be heard on the upper plane as a member of the lower plane
  one should shout in the right direction (and also rather at a high
  pitch).
\end{remark}
\begin{remark}
Here we give some indications on coverings of the
Euclidean plane with three or more sheets. In the case of three
sheets and two branch points the southern rim of the cut in the sheets numbered
I, II, and III is identified with the northern rim of the sheets numbered
II, III, and I. Then the situation is basically the same as with two sheets
and all channels are open. In the case of four sheets and two
branch points the identification of the rims proceeds as above.
Here we can show that neighboring sheets are open to one another while our
method fails to decide whether the sheets I and III are open
one to another; the same holds for the sheets II and IV. We suspect that
the transmission is very weak (or zero) in the latter cases. 

For three and more sheets there are of course also other possibilities
to connect the sheets along cuts. For three sheets we might look at
two different cuts (and thus four branch points) with sheets I and II
connected along the first cut and sheets II and III connected along
the second cut.  If the two cuts are not aligned we may still
construct wave packets that move from sheet I up to sheet III, up to
small errors. If the two cuts are aligned (i.e., both lie on the real
axis and have positive distance) our method fails. In this last case
we would expect that there is only very weak (or no) transmission from
sheet I to sheet III.

Also note that we are dealing with two (or more) branch points because
a manifold with two sheets and a single branch point---like the
Riemann surface of $\sqrt z$---constitutes just one scattering channel
in our setup.  In this case there is no simple comparison with the
free Laplacian on the Euclidean plane.
\end{remark}


\begin{remark}
  \label{rem:1.3} 
  The singularities at the branch points are only a side issue in our
  investigations. For most of our results, it wouldn't make much of a
  difference if we would ``punch out'' two small holes around the
  branch points and consider the Laplacian with Dirichlet boundary
  conditions on the (smooth) boundaries of these balls.  However, the
  radius of these balls would introduce a parameter which is not well
  motivated and one would have to investigate questions of convergence
  etc.\ as this radius goes to zero.
\end{remark}

For the record, we complement the estimates of \Lem{1.7} with some
further basic properties of $w_0$.
\begin{lemma}
  \label{lem:1.7.2nd.part} 
  Let $P_{\pm, \upp}$ and $P_{\pm, \low}$ denote the projections onto
  the ranges of the wave operators $W_\pm(H, H_{0,\upp}, J_\upp)$ and
  $W_\pm(H, H_{0,\low}, J_\low)$, respectively.  For $\eps > 0$ let
  $w_0$ be as in \Lem{1.7}.  We then have:
  \begin{equation}
    \label{eq:lem.1.7.3a}
   \norm{P_{+,\upp} w_0} > (1 - \eps) \norm{w_0}, \qquad 
     \norm{P_{+,\low} w_0} < \eps \norm{w_0} .
   \end{equation}
   and
   \begin{equation}
     \label{eq:lem.1.7.3b}
  \norm{P_{-,\low} w_0} > (1 - \eps) \norm{w_0}, \qquad 
    \norm{P_{-,\upp} w_0} < \eps \norm{w_0}.     
   \end{equation}
 \end{lemma}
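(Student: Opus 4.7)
The plan is to translate each estimate from \Lem{1.7} into a statement about projections by comparing $w_0$ with an appropriate image of a channel wave operator, and then to use the orthogonal decomposition~\eqref{eq:1.13} (which collapses to $P_{\pm,\upp} + P_{\pm,\low} = I$, since $H$ is purely absolutely continuous) to read off both inequalities simultaneously.

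Identifying $\Lsqr{M_{0,\upp},\gEucl}$ with $\Lsqr{\R^2}$ so that $H_{0,\upp}$ acts as $A_0$ and $v_0 \in \Schwartz{\R^2}$ lies naturally in $\Lsqr{M_{0,\upp},\gEucl}$, I set $\psi_+ := W_+(H,H_{0,\upp},J_\upp) v_0$. By construction $\psi_+ \in \ran W_+(H,H_{0,\upp},J_\upp)$, so $P_{+,\upp}\psi_+ = \psi_+$, and by definition of the wave operator
\begin{equation*}
  \norm{\e^{-\im tH}\psi_+ - J_\upp \e^{-\im tA_0} v_0} \longrightarrow 0
  \quad \text{as } t\to +\infty.
\end{equation*}
Inserting $J_\upp \e^{-\im tA_0} v_0$ as an intermediate term in $\norm{\e^{-\im tH}(w_0-\psi_+)}$, applying the triangle inequality together with~\eqref{eq:lem.1.7.1}, and using unitarity of $\e^{-\im tH}$ to strip off the propagator, I obtain in the limit $t\to +\infty$
\begin{equation*}
   \norm{w_0 - \psi_+} \le \eps \norm{w_0}.
\end{equation*}

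For~\eqref{eq:lem.1.7.3a}: since $(I - P_{+,\upp})\psi_+ = 0$,
\begin{equation*}
  \norm{(I - P_{+,\upp}) w_0}
  = \norm{(I - P_{+,\upp})(w_0 - \psi_+)} \le \norm{w_0 - \psi_+} \le \eps\norm{w_0},
\end{equation*}
and the Pythagorean identity gives $\norm{P_{+,\upp} w_0}^2 \ge (1-\eps^2)\norm{w_0}^2$, hence $\norm{P_{+,\upp} w_0} \ge \sqrt{1-\eps^2}\,\norm{w_0} > (1-\eps)\norm{w_0}$ for $0<\eps<1$. For the second inequality, the absolute continuity of $H$ gives $\HH_\ac(H) = \HH$, so~\eqref{eq:1.13} reads $P_{+,\upp} + P_{+,\low} = I$ as a sum of orthogonal projections onto complementary subspaces; therefore $\norm{P_{+,\low} w_0} = \norm{(I - P_{+,\upp}) w_0} \le \eps\norm{w_0}$.

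The estimates~\eqref{eq:lem.1.7.3b} follow by the same scheme, using~\eqref{eq:lem.1.7.2} in place of~\eqref{eq:lem.1.7.1} and setting $\psi_- := W_-(H, H_{0,\low}, J_\low) v_0$ so that its defining limit at $t \to -\infty$ plays the role of the earlier $t \to +\infty$ limit. There is no genuine obstacle in the present argument: all the analytical work was done in the construction of the transiting wave packet for \Lem{1.7}, and this lemma is essentially formal bookkeeping, turning asymptotic sheet-localization into projection bounds.
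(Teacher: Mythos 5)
Your proof is correct, and it rests on the same analytical foundations as the paper's (namely \Lem{1.7} together with completeness of the wave operators), but the bookkeeping is organized differently. The paper proves~\eqref{eq:lem.1.7.3a} by the Projection Theorem in its variational form
$\norm{P_{+,\upp}w_0} = \sup\bigset{\bigabs{\scapro{w_0}{W_+(H,H_{0,\upp},J_\upp)\phi}}}{\norm\phi = 1}$,
plugs in $\phi = v_0$, and computes the inner product as a $t\to+\infty$ limit; for the upper bound on $\norm{P_{+,\low}w_0}$ it repeats this with the lower-channel wave operator and invokes the orthogonality $J_\upp J_\low = 0$ inside the inner product. You instead work with the vector $\psi_+ := W_+(H, H_{0,\upp}, J_\upp) v_0$ directly, show $\norm{w_0-\psi_+}\le\eps\norm{w_0}$, and read off the first bound from Pythagoras; the second bound then costs you nothing, because you observe that~\eqref{eq:1.13} together with $\HH_\ac(H)=\HH$ gives $P_{+,\low} = I - P_{+,\upp}$, so the lower-channel estimate is literally the same number as the complementary piece from the first estimate. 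This is a small but genuine streamlining: it replaces a second limit computation and the $J_\upp J_\low=0$ argument with a single appeal to the orthogonal decomposition, and it also makes transparent that the two bounds in~\eqref{eq:lem.1.7.3a} (and likewise in~\eqref{eq:lem.1.7.3b}) are not independent but are two faces of one estimate. Both routes are valid; the content is the same once \Lem{1.7} is in hand.

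Two minor points worth being aware of. First, your Pythagorean step gives $\norm{P_{+,\upp}w_0}\ge\sqrt{1-\eps^2}\,\norm{w_0}$, which indeed exceeds $(1-\eps)\norm{w_0}$ for $0<\eps<1$, so the stated inequality follows; it is worth saying this explicitly, as you did. Second, the passage from the strict pointwise inequality in~\eqref{eq:lem.1.7.1} to the limit $t\to+\infty$ a priori only yields $\norm{w_0-\psi_+}\le\eps\norm{w_0}$ rather than a strict inequality; this is harmless (one can run \Lem{1.7} with $\eps/2$, say, or simply note that the paper is not tracking strictness here either), but a careful reader will notice it.
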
 
\begin{proof} 
 We only show~\eqref{eq:lem.1.7.3a}; the proof of \eqref{eq:lem.1.7.3b} is 
 analogous and ommitted. By the Projection Theorem,
    we have
    \begin{align}
      \nonumber
      \norm{P_{\pm, \upp} w_0}
      & = 
      \sup \bigset{\bigabs{\scapro{w_0} \psi}}
      {\psi \in \ran W_\pm(H, H_{0,\upp}, J_\upp),  \;
        \norm \psi = 1} \\
      \label{eq:1.33}
      & =
      \sup \bigset{\bigabs{\scapro{w_0} {W_\pm(H, H_{0,\upp}, J_\upp) \phi}}}
      {\phi \in \HH_\upp, \; \norm \phi = 1},
    \end{align}
    since $\norm{W_\pm(H, H_{0,\upp}, J_\upp)\phi} = \norm{\phi}$ for
    all $\phi \in \HH_\upp$.  In the RHS of eqn.~\eqref{eq:1.33} we
    have
    \begin{equation}
      \label{eq:1.34}
      \scapro[\Lsqr \MM] {w_0} {W_\pm(H, H_{0,\upp}, J_\upp) \phi}
      = \lim_{t \to \pm\infty} 
      \scapro[\Lsqr \MM]{\e^{-\im t H} w_0}{J_\upp \e^{-\im t H_{0,\upp}} \phi}.
    \end{equation}
    In order to obtain a lower bound on $\norm{P_{+, \upp} w_0}$ we
    choose $\phi := v_0$ in Eqn.~\eqref{eq:1.33} and use \Lem{1.7} to
    find
    \begin{equation}
      \label{eq:1.35}
      \Bigabs{\bigscapro[\Lsqr \MM] {\e^{-\im t H} w_0}
                              { J_\upp \e^{-\im t H_{0,\upp}} v_0}
                              - \norm{w_0}^2}   < \eps.  
    \end{equation}
    For an upper bound on $\norm{P_{+,\ell} w_0}$ we use
  $J_\upp J_\ell = 0$, combined with \Lem{1.7}, to see that  
    $\norm{P_{+, \ell} w_0} < \eps$. 
\end{proof}

Of course, one could as well work with the usual formula for the
projection onto the range of a partial isometry. In our case this
formula reads
\begin{equation}
  \label{eq:*}
   P_{\pm, \upp} 
   =  W_\pm(H, H_{0,\upp}, J_\upp) \circ
       \bigl(W_\pm(H, H_{0,\upp}, J_\upp)\bigr)^*. 
\end{equation}


Let us first show that the adjoints $(W_\pm(H, H_{0,\upp}, J_\upp))^*$ of the 
wave operators $W_\pm(H, H_{0,\upp}, J_\upp)$ are given by strong limits,
\begin{equation}
  \label{eq:1.18'}
  \bigl(W_\pm(H, H_{0,\upp}, J_\upp)\bigr)^* 
  =\slimpm  \e^{\im t H_{0,\upp}} J_\upp^* \e^{-\im t H},  
\end{equation}
with $P_\ac(H) = I$.  Since the wave operators $W_\pm(H, H_0, J)$
exist and are complete (and because $J$ satisfies the requirements
of~\cite[p.~36, Prop.~5(c)]{reed-simon-3}, it follows that the wave
operators
\begin{equation}
  \label{eq:1.19'}
  W_\pm(H_0, H, J^*)
  = \slimpm \e^{\im tH_0} J^* \e^{-\im tH} 
\end{equation}
exist. Here $H_0 = H_{0,\low} \oplus H_{0,\upp}$ and $J^* = (P_\low,
P_\upp)$ and we see that
\begin{equation}
  \label{eq:1.20'}
  \e^{\im tH_0} J^* \e^{-\im tH}
  = \bigl( \e^{\im t H_{0,\low}} P_\low \e^{-\im t H}, 
    \e^{\im t H_{0,\upp}} P_\upp \e^{-\im t H} \bigr). 
\end{equation}
The ranges of $ \e^{\im t H_{0,\low}} P_\low \e^{-\im t H} $ and
$\e^{\im t H_{0,\upp}} P_\upp \e^{-\im t H}$ being orthogonal, it is
clear that the strong limit of the left hand side of~\eqref{eq:1.20'}
can only exist if the strong limits of both terms on the right hand
side exist (as $t \to \pm\infty$).

For $w_0$ as in \Lem{1.7} we now compute
\begin{align*}
  \scapro {P_{\pm,\upp} w_0}{w_0}
   = \lim_{t \to \pm\infty}
        \normsqr {\e^{\im t H_{0,\upp}} J_\upp^* \e^{-\im t H} w_0}
   = \lim_{t \to \pm\infty} \normsqr {P_\upp \e^{-\im tH} w_0}
\end{align*}
and the desired result follows by \Lem{1.7}.

%
\section{Perturbations of the Metric}
\label{sec:pert-metric} 
%

We first recall some notions and definitions in Differential Geometry
as used in~\cite{hpw:14}. Given a (smooth) Riemannian metric $g =
(g_{ij})$ on the $\Cispace$-manifold $M$, we denote by $\MM=(M,g)$ the
Riemannian manifold and we let $B_\delta(p)=B_{\delta,\MM}(p)$ denote
the geodesic open ball centered at $p \in M$ with radius $\delta >
0$. For simplicity, we only consider \emph{smooth} metrics $g$ on $M$;
cf., however, the discussion in~\cite{hpw:14} on the non-smooth
case. Our assumptions on $g$ will mainly involve the (sectional or
Gau{\ss}) curvature of $g$ and the injectivity radius. The
\emph{homogenized injectivity radius} $\iota_\MM(p)$ at $p \in M$ is
defined as in~\cite{anderson-cheeger:92} or~\cite[Eqn.~(2.7)]{hpw:14}
by
 \begin{equation}
  \label{eq:2.1}
  \iota_\MM(p) := \sup_{\delta > 0}
     \min \Bigl\{\delta, 
         \inf \bigset{\inj_\MM(y)}{y \in B_{\delta,\MM}(p)} \Bigr\} 
\end{equation}
where $\inj_\MM(y)$ denotes the usual injectivity radius at the point
$y$.  The number $\iota_\MM(p)$ is the largest number $\delta > 0$ for
which the injectivity radius at any $y \in B_\delta(p)$ is not smaller than $\delta$.

The following definition (cf.~\cite[Def.~2.4]{hpw:14}) is of basic importance for our investigations:
\begin{definition}
  \label{def:2.1}
  For a continuous positive function $r_0 \colon M \to (0,1]$ we
  denote by $\Met_{r_0}(M)$ the set of smooth metrics $g$
  on $M$ that satisfy the lower bounds
  \begin{equation}
    \label{eq:2.2}
    \iota_\MM(p) \ge r_0(p), \quadtext{and} 
    \inf \set{\Ric^-_\MM(y)}{y \in B_{r_0(p),\MM}(p)}
    \ge  - \frac 1 {r_0(p)^2}, 
  \end{equation}   
  for all $p \in M$, where $\MM=(M,g)$.
\end{definition}
Since we are in two dimensions, the Ricci curvature equals the Gau{\ss}
curvature (times the metric tensor $g$). The second
condition in eqn.~\eqref{eq:2.2} is a lower bound for the homogenized
Ricci curvature.
Notice that the Euclidean metric $\gEucl = (\delta_{ij})$ on $M$ 
 belongs to $\Met_{r_0}(M)$ if and only if $r_0$ satisfies the condition
\begin{equation}
  \label{eq:2.3}
  r_0(p) 
  \le \frac1 2 \min\{\dist(p,\Qm), \dist(p,\Qp) \}. 
\end{equation}  

We denote by $\Lsqr \MM$ the usual space of (equivalence classes of)
$\Lsqrspace$-integrable functions on the Riemannian manifold
$\MM=(M,g)$ with respect to the Riemannian measure $\dvol_g$.  The
following definition is standard.
\begin{definition}[{cf.~\cite[Def.~3.1]{hpw:14}}]
  \label{def:2.2}
  We say that the Riemannian metrics $g_1$, $g_2$ are
  \emph{quasi-isometric} if there exists a constant $\eta > 0$ such
  that
  \begin{equation}
    \label{eq:2.4}
    \eta g_1(p)(\xi,\xi) \le g_2(p)(\xi,\xi) \le \eta^{-1} g_1(p)(\xi,\xi), 
  \end{equation}
  for all $\xi \in TM$ and $p \in M$. 
\end{definition}
In our case $TM$ can be identified with $\R^2$.  The Hilbert spaces
$\Lsqr{\MM_1}$ and $\Lsqr{\MM_2}$ coincide if $\MM_i = (M, g_i)$ with
$g_1$ quasi-isometric to $g_2$. In this case we let $I$ denote the
natural identification operator mapping a function $f \in
\Lsqr{\MM_1}$ to the same function $f$ in $\Lsqr{\MM_2}$.

We now take a closer look at the property that the metrics $g$ and
$\gEucl$ are quasi-isometric.  Let $A(p)$ be the endomorphism on $TM$
given by $g(p)(\xi,\xi)=\gEucl(A(p)\xi,\xi)$ for all $\xi \in T_pM$
and $p \in M$ and let $\alpha_k(p)$, $k = 1,2$, denote the eigenvalues
of $A(p)$.  If $(g_{ij}(p))$ denotes the matrix representation of $g$
on $T_pM$ in the standard coordinates, then the $\alpha_k(p)$ are
also the eigenvalues of $(g_{ij}(p))$.  Thus $g$ and $\gEucl$ are
quasi-isometric if and only if there is a number $\eta > 0$ such that
$\eta \le \alpha_k(p) \le \eta^{-1}$, for $k = 1, 2$ and for all $p
\in M$.

We are now ready to define the basic distance function $\wt d_1$: Let
$\alpha_1(p)$, $\alpha_2(p)$ denote the eigenvalues of $A(p)$.  We
then define as in~\cite[eqns.~(3.2) and~(3.5)]{hpw:14}
\begin{equation}
  \label{eq:2.5}
  \wt d (\gEucl,g)(p)
  := \max_k \bigabs{\alpha_k(p)^{1/2} - \alpha_k(p)^{-1/2}},
\end{equation}  
\begin{equation}
  \wt d_\infty(\gEucl,g) := \sup_{p \in M} \wt d(\gEucl,g)(p)
  \quadtext{and}
  \label{eq:2.6}
  \wt d_1(\gEucl,g) := \int_M  \wt d(\gEucl,g)(p) r_0(p)^{-4} \dd p. 
\end{equation}
We call $\wt d_1(\gEucl,g)$ the \emph{weighted
  $\Lpspace[1]$-quasi-distance of $g$ and $\gEucl$}; we have dropped
the symmetrizing factor $1 + \rho_{\gEucl,g}(p)$ of $\wt d_1$
appearing in~\cite[eqn.~(3.5)]{hpw:14} (which has no influence on our
estimates because it is a bounded function).

Let us assume now that $g$ is quasi-isometric to the Euclidean metric
$\gEucl$ (this is equivalent with $\wt d_\infty(g,\gEucl)<\infty$),
and denote by $\MM=(M,g)$ the corresponding Riemannian manifold.  Then
there is a (unique) self-adjoint Laplacian $H_g$, acting in the
Hilbert space $\Lsqr \MM$, with quadratic form domain given by the
Sobolev space $\Sobn \MM$, and defined by
\begin{equation}
  \label{eq:2.7}
   \scapro{H_g u} v
   = \int_M \scapro[g]{\nabla u}{\nabla v} \dvol_g
   = \sum_{ij} \int_M g^{ij} \partial_i u \partial_j \conj{v} 
       \sqrt {\det g} \dd p, 
\end{equation}
for any $u \in \dom(H_g) \subset \Sobn \MM$ and $v \in \Sobn \MM$,
where $(g^{ij})$ is the inverse of $(g_{ij})$.  In the Euclidean case
($g = \gEucl$) the operator $H_\gEucl$ agrees with the operator $H$
defined in \Sec{wave-ops-euclid}; recall that $H_\gEucl$ is purely
a.c. From Theorem~3.7 of~\cite{hpw:14} we now obtain the following
result on the existence and completeness of the wave operators.

\begin{theorem}
  \label{thm:2.3}
  Suppose we are given a continuous function $\map{r_0} M {(0,1]}$
  satisfying condition (2.3) and a metric $g \in \Met_{r_0}(M)$ which
  is quasi-isometric to the Euclidean metric $\gEucl$ on $M$.
  We also assume that the difference between $g$ and $\gEucl$
  satisfies the $r_0$-dependent weighted integral condition
  $\wt{d}_1(g,\gEucl) < \infty$ with $\wt{d}_1$ as in~\eqref{eq:2.6}.

  Then the wave operators
  \begin{equation}
    \label{eq:2.8}
    W_\pm(H_g,H_\gEucl,I) =\slimpm \e^{\im tH_g} I \e^{-\im tH_\gEucl} 
  \end{equation} 
  and
  \begin{equation}
    \label{eq:2.8b}
    W_\pm(H_g,H_0,IJ) =\slimpm \e^{\im tH_g} IJ \e^{-\im tH_0} 
  \end{equation} 
  exist and are complete with final subspace $\HH_\ac(H_g)$. 
\end{theorem}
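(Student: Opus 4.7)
The plan is to reduce the first assertion of the theorem to~\cite[Thm.~3.7]{hpw:14} applied to the pair $(H_g, H_\gEucl)$ with identification $I$, and then to obtain the second assertion by composing with the wave operators of \Prp{1.4} via the chain rule.

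To invoke~\cite[Thm.~3.7]{hpw:14} I would verify four hypotheses: $g \in \Met_{r_0}(M)$, $\gEucl \in \Met_{r_0}(M)$, quasi-isometry of $g$ and $\gEucl$, and $\wt d_1(g,\gEucl) < \infty$. Three of these are exactly our standing assumptions, and only $\gEucl \in \Met_{r_0}(M)$ needs to be checked. The Ricci lower bound in~\eqref{eq:2.2} is automatic because $\gEucl$ has vanishing Gauss curvature. For the homogenized injectivity-radius bound, I would argue that for $p \in M$ and $\delta \le \tfrac12 \min\{\dist(p,\Qm), \dist(p,\Qp)\}$ the geodesic ball $B_\delta(p)$ lies in a flat Euclidean disc inside a single sheet, so $\inj_{(M,\gEucl)}(y) \ge \delta$ for every $y$ in this ball. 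The condition~\eqref{eq:2.3} on $r_0$ is precisely tailored so that this pointwise bound propagates through the supremum-infimum in~\eqref{eq:2.1}; the estimates carried out in \App{lower-bound-inj-rad} would cover this step cleanly.

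With the hypotheses verified, \cite[Thm.~3.7]{hpw:14} yields the existence and completeness of $W_\pm(H_g, H_\gEucl, I)$ with final subspace $\HH_\ac(H_g)$, and initial subspace all of $\Lsqr{M,\gEucl}$ since $H_\gEucl$ is purely absolutely continuous (see \Sec{wave-ops-euclid}). For the second family~\eqref{eq:2.8b}, I would use the factorisation
\begin{equation*}
  \e^{\im t H_g} IJ \e^{-\im t H_0}
  = \bigl(\e^{\im t H_g} I \e^{-\im t H_\gEucl}\bigr)
    \bigl(\e^{\im t H_\gEucl} J \e^{-\im t H_0}\bigr).
\end{equation*}
Since the second factor converges strongly as $t \to \pm\infty$ by \Prp{1.4}, and the first factor is uniformly bounded and converges strongly on $\Lsqr{M,\gEucl}$ by what was just shown, a standard triangle-inequality argument gives $W_\pm(H_g, H_0, IJ) = W_\pm(H_g, H_\gEucl, I) \circ W_\pm(H_\gEucl, H_0, J)$. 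Completeness and identification of the final subspace as $\HH_\ac(H_g)$ are inherited from the corresponding properties of the two factors.

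The only nontrivial obstacle is the injectivity-radius verification for $\gEucl$ near the branch points: since $(M,\gEucl)$ is incomplete at $\Qpm$ and exhibits essentially conical behaviour there, standard comparison-geometry arguments do not apply directly. This is overcome by the explicit flatness of $\gEucl$ away from $\Qpm$ together with the tailored shrinking of $r_0$ imposed by~\eqref{eq:2.3}, as detailed in \App{lower-bound-inj-rad}.
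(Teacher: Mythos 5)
Your proposal is correct and follows essentially the same route as the paper: invoke~\cite[Thm.~3.7]{hpw:14} for the pair $(H_g,H_\gEucl)$ after noting that condition~\eqref{eq:2.3} is precisely the statement $\gEucl\in\Met_{r_0}(M)$, then obtain~\eqref{eq:2.8b} by the chain rule with the unitary wave operators from \Prp{1.4}. One small misattribution: the verification that $\gEucl$ satisfies~\eqref{eq:2.2} under~\eqref{eq:2.3} is the elementary computation given directly after \Def{2.1} (flatness plus the incompleteness cutoff at $\Qpm$), whereas \App{lower-bound-inj-rad} is aimed at the \emph{perturbed} metrics $g_f$ and is not what carries this particular step.
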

\begin{remark*}
  Under suitable conditions on $g$ the operator $H_g$ will be
  absolutely continuous (cf.\ Donnelly~\cite{donnelly:99},
  Kumura~\cite{kumura:10, kumura:13}. In this
  case the wave operators in \eqref{eq:2.8} are even unitary.
\end{remark*} 
\begin{remark*}
  In applying the fundamental perturbation theorems in~\cite{hpw:14}
  we can deal with the branch points $\Qpm$ in the way described in
  \Rem{1.5}, i.e., we have (formally) a manifold with four ends, with
  two ends given by $M_\ext$ as in Eqn.~\eqref{eq:1.7} and two ends
  given by $B_{1/2}(\Qpm)$.  Again, the ends $B_{1/2}(\Qpm)$ do not
  participate in the scattering.
\end{remark*}

Following the development in Section~5 of~\cite{hpw:14} we next
consider the question of continuity of the scattering matrix and the
openness of the scattering channels for small perturbations of the
Euclidean metric. As in~\cite{hpw:14} we define for $r_0$ as above and
$\gamma, \eps > 0$
\begin{equation*}
  \Met_{r_0}(M, \gEucl, \gamma, \eps) 
  := \bigset{g \in \Met_{r_0}(M)}
         {\wt d_\infty(g,\gEucl) \le \gamma, \quad 
          \wt d_1(g,\gEucl) \le \eps}, 
\end{equation*}
i.e., $ \Met_{r_0}(M, \gEucl, \gamma, \eps) $ is the set of smooth metrics
$g$ on $M$ enjoying the following properties:
\begin{enumerate}
\item
  \label{def.met.i}
  The  homogenized injectivity radius and the homogenized curvature of $g$  
  at $p \in M$ are bounded from below by $r_0(p)$ and by 
  $-1/r_0(p)^2$, respectively. 
\item
  \label{def.met.ii}
  The metric $g$ is quasi-isometric to $\gEucl$ with the bound $\wt
  d_\infty(g,\gEucl) \le \gamma$.

\item
  \label{def.met.iii}
  The weighted $\Lpspace[1]$-quasi-distance $\wt{d}_1(g,\gEucl)$ is
  not larger than $\eps$.
\end{enumerate}
Note that condition~\itemref{def.met.iii} requires a quantitative
smallness of the deviation of $g$ from the Euclidean metric in the
sense that $\wt d_1(g,\gEucl) \le \eps$ while the main assumption in
\Thm{2.3} only stipulates $\wt d_1(g,\gEucl) < \infty$.

Then Theorem~5.1 of~\cite{hpw:14} yields the strong convergence of the
scattering operators as $\eps \downarrow 0$, and Cor.~5.3
of~\cite{hpw:14} establishes the openness of the scattering channels,
for small $\eps > 0$. We are now going to make this precise.

Let $\gamma > 0$ be fixed. For $\eps > 0$, we consider $g_\eps \in
\Met_{r_0}(M, \gEucl, \gamma, \eps)$ and we let $H_{g_\eps}$ denote
the Laplacian of $(M,g_\eps)$. The natural identification operator
from $\Lsqr{M,\gEucl}$ to $\Lsqr{M,g_\eps}$ is written
$I_{g_\eps}$. Then the scattering operator is given by
\begin{equation*}
  S_{g_\eps}
  = S(H_{g_\eps}, H_0, I_{g_\eps}J) 
  = \bigl(W_+(H_{g_\eps}, H_0, I_{g_\eps}J)\bigr)^* 
                \circ W_-(H_{g_\eps}, H_0, I_{g_\eps}J), 
\end{equation*}   
with $H_0$ and $J$ as in \Sec{wave-ops-euclid}, \Prp{1.4}, and the scattering
matrix $(S_{ij})_{i,j \in \{\low,\upp\}}$ is defined by
\begin{equation*}
  S_{ij}(H_{g_\eps}, H_0, I_{g_\eps}J)
  := \bigl(W_+(H_{g_\eps}, H_{0,i}, I_{g_\eps}J_i)\bigr)^* 
                \circ W_-(H_{g_\eps}, H_{0,j}, I_{g_\eps}J_j), 
\end{equation*}
for $i,j \in \{\low, \upp\}$. Then~\cite[Thm.~5.1]{hpw:14} yields the
following result:

\begin{theorem}
  \label{thm:2.4}
  Let $\gEucl$, $H_\gEucl$, $H_0$, $J$ as above, let $S(H_\gEucl,H_0,J)$ as in
  eqn.~\eqref{eq:scattering-operator} , and let $\gamma > 0$.  For
  $\eps > 0$ and $g_\eps \in \Met_{r_0}(M, \gEucl, \gamma, \eps)$ we
  denote by $H_{g_\eps}$ the Laplacian of $\MM_\eps = (M, g_\eps)$ and
  by $I_\eps \colon \Lsqr \MM \to \Lsqr{\MM_\eps}$ the natural
  identification.

  Then there exists $\eps_0 > 0$ such that the scattering operators
  $S(H_{g_\eps}, H_0, I_\eps J)$ converge strongly to $S(H, H_0, J)$,
  as $\eps \to 0$.
\end{theorem}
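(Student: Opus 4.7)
The plan is to reduce Theorem 2.4 to a direct application of \cite[Thm.~5.1]{hpw:14}, with $H_\gEucl$ playing the role of the reference operator and $H_{g_\eps}$ that of the perturbed operator, by a two-step factorization of the scattering operator. First, for each fixed $\eps$ with $g_\eps \in \Met_{r_0}(M,\gEucl,\gamma,\eps)$, Theorem~2.3 guarantees that the wave operators $W_\pm(H_{g_\eps},H_\gEucl,I_\eps)$ exist and are complete. Since $W_\pm(H_\gEucl,H_0,J) = W_\pm(H,H_0,J)$ exist and are unitary by Proposition~1.4, the chain rule for wave operators gives
\begin{equation*}
W_\pm(H_{g_\eps}, H_0, I_\eps J) = W_\pm(H_{g_\eps}, H_\gEucl, I_\eps) \circ W_\pm(H_\gEucl, H_0, J),
\end{equation*}
and forming the scattering operators yields the factorization
\begin{equation*}
S(H_{g_\eps}, H_0, I_\eps J) = W_+(H_\gEucl, H_0, J)^* \circ S(H_{g_\eps}, H_\gEucl, I_\eps) \circ W_-(H_\gEucl, H_0, J),
\end{equation*}
where $S(H_{g_\eps}, H_\gEucl, I_\eps) := W_+(H_{g_\eps}, H_\gEucl, I_\eps)^* \, W_-(H_{g_\eps}, H_\gEucl, I_\eps)$ acts on $\HH_\ac(H_\gEucl) = \Lsqr{M,\gEucl}$.

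Next, I would verify that the hypotheses of \cite[Thm.~5.1]{hpw:14} are satisfied in the present setting. This is essentially bookkeeping: the membership $g_\eps \in \Met_{r_0}(M,\gEucl,\gamma,\eps)$ gives directly (i) a uniform lower bound $r_0$ on the homogenized injectivity radius, (ii) the uniform Ricci lower bound $-1/r_0^2$, (iii) the uniform quasi-isometry bound $\wt d_\infty(g_\eps,\gEucl)\le\gamma$, and (iv) the quantitative smallness $\wt d_1(g_\eps,\gEucl)\le\eps \to 0$. The only non-standard point is that our manifold has the two branch points $\Qpm$, which formally do not fit the ``manifold with ends'' framework of \cite{hpw:14}; as explained in Remark~1.5 and in the second remark following Theorem~2.3, one simply regards $B_{1/2}(\Qpm)$ as two additional dead ends, which by Lemma~1.2 have Dirichlet Laplacians with compact resolvent and therefore do not contribute to the scattering for either $H_\gEucl$ or $H_{g_\eps}$.

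Invoking \cite[Thm.~5.1]{hpw:14} then yields the strong convergence
\begin{equation*}
S(H_{g_\eps}, H_\gEucl, I_\eps) \longrightarrow \mathrm{id}_{\Lsqr{M,\gEucl}} \qquad (\eps \downarrow 0).
\end{equation*}
Combining this with the factorization, for every $\phi \in \HH_0$ the vector $\psi := W_-(H_\gEucl,H_0,J)\phi$ lies in $\Lsqr{M,\gEucl}$, so $S(H_{g_\eps},H_\gEucl,I_\eps)\psi \to \psi$ in norm, and applying the bounded operator $W_+(H_\gEucl,H_0,J)^*$ gives
\begin{equation*}
S(H_{g_\eps}, H_0, I_\eps J)\phi \longrightarrow W_+(H_\gEucl,H_0,J)^* W_-(H_\gEucl,H_0,J)\phi = S(H,H_0,J)\phi,
\end{equation*}
which is the claimed strong convergence.

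The main obstacle in this plan is purely expository: matching our branched-covering setup with the abstract framework of \cite{hpw:14} (in particular the treatment of the branch points as dead ends, and the verification that dropping the symmetrizing factor $1+\rho_{\gEucl,g}$ in~\eqref{eq:2.6} does not affect the hypotheses). There is no new analytic input beyond what is already established in Theorem~2.3 and in \cite{hpw:14}, but the identification of objects between the two setups must be carried out carefully in order for the invocation of \cite[Thm.~5.1]{hpw:14} to be legitimate.
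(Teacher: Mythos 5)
Your proposal is correct and lands on the same key step as the paper, which is a direct citation of \cite[Thm.~5.1]{hpw:14} after verifying that $g_\eps \in \Met_{r_0}(M,\gEucl,\gamma,\eps)$ supplies exactly the hypotheses (uniform injectivity-radius and curvature bounds, uniform quasi-isometry, and $\wt d_1 \to 0$), together with the dead-ends device of Remark~\ref{rem:1.5} to handle the branch points. The only genuine difference is one of routing: the paper applies \cite[Thm.~5.1]{hpw:14} directly to the multi-channel pair $(H_{g_\eps},H_0,I_\eps J)$ with $\gEucl$ as reference metric, obtaining $S(H_{g_\eps},H_0,I_\eps J)\to S(H_\gEucl,H_0,J)$ in one step, whereas you first derive the chain-rule factorization $S(H_{g_\eps},H_0,I_\eps J)=W_+(H_\gEucl,H_0,J)^*\,S(H_{g_\eps},H_\gEucl,I_\eps)\,W_-(H_\gEucl,H_0,J)$ and invoke \cite[Thm.~5.1]{hpw:14} for the intermediate convergence $S(H_{g_\eps},H_\gEucl,I_\eps)\to\mathrm{id}$. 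The two formulations are equivalent precisely because $W_\pm(H_\gEucl,H_0,J)$ are unitary (\Prp{1.4}), so your detour is harmless and in fact makes the mechanism more transparent; the only caveat is that it implicitly attributes to \cite[Thm.~5.1]{hpw:14} the intermediate, single-reference-metric statement rather than the multi-channel one the paper cites, which you should double-check against the wording of that theorem before presenting this as the literal application.
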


As in~\cite[Cor.~5.3]{hpw:14}, we immediately obtain a stability
result for the scattering matrix
where we also use the fact, established in \Thm{1.9}, that the
operators $S_{ik}(H, H_0, J)$, $i,k \in \{\low, \upp\}$, are non-zero, 
i.e., all scattering channels are open.  

\begin{corollary}
  \label{cor:2.5}
  For any $\gamma > 0$ fixed, there exists $\eps_0 > 0$ such that
  $S_{ik}(H_{g_\eps}, H_0, I_\eps J) \ne 0$ for all metrics $g_\eps \in
  \Met_{r_0}(M, \gEucl, \gamma, \eps)$ and all $0< \eps \le \eps_0$.
\end{corollary}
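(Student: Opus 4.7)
The plan is to combine Theorem~2.4 (strong convergence of the scattering operators) with Theorem~1.9 (non-vanishing of each channel entry in the unperturbed case) via a routine perturbation argument. First I would observe that the full scattering operator $S(H_{g_\eps},H_0,I_\eps J)$ on $\HH_0 = \Lsqr{M_{0,\low},\gEucl} \oplus \Lsqr{M_{0,\upp},\gEucl}$ carries the same $2 \times 2$ block structure as $S(H,H_0,J)$, since the generalized wave operator decomposes as
\begin{equation*}
  W_\pm(H_{g_\eps}, H_0, I_\eps J)(f_\low, f_\upp)
  = W_\pm(H_{g_\eps}, H_{0,\low}, I_\eps J_\low) f_\low
   + W_\pm(H_{g_\eps}, H_{0,\upp}, I_\eps J_\upp) f_\upp,
\end{equation*}
and similarly for the unperturbed case. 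Consequently, if $P_i$ denotes the orthogonal projection of $\HH_0$ onto $\Lsqr{M_{0,i},\gEucl}$ and $\iota_j$ the canonical inclusion, then $S_{ij} = P_i \, S \, \iota_j$ and $S_{ij}^{(\eps)} = P_i \, S(H_{g_\eps},H_0,I_\eps J) \, \iota_j$, for $i,j \in \{\low,\upp\}$.

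Next I would exploit Theorem~1.9: for each pair $(i,j)$ there exists a vector $f^{(ij)} \in \Lsqr{M_{0,j},\gEucl}$ with $S_{ij}(H,H_0,J) f^{(ij)} \ne 0$. Applying Theorem~2.4 to the embedded vector $\iota_j f^{(ij)} \in \HH_0$ yields
\begin{equation*}
  S(H_{g_\eps},H_0,I_\eps J) \, \iota_j f^{(ij)}
  \longrightarrow S(H,H_0,J) \, \iota_j f^{(ij)}
  \quadtext{in} \HH_0, \qquad \eps \to 0.
\end{equation*}
Since $P_i$ is bounded, composing with $P_i$ preserves strong convergence, so $S_{ij}^{(\eps)} f^{(ij)} \to S_{ij}(H,H_0,J) f^{(ij)}$ in $\Lsqr{M_{0,i},\gEucl}$.

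Finally, because the limit is non-zero, there exists $\eps_{ij} > 0$ such that $\norm{S_{ij}^{(\eps)} f^{(ij)}} \ge \frac 12 \norm{S_{ij}(H,H_0,J) f^{(ij)}} > 0$ for all $g_\eps \in \Met_{r_0}(M,\gEucl,\gamma,\eps)$ with $0 < \eps \le \eps_{ij}$; in particular $S_{ij}^{(\eps)} \ne 0$ on this range. Setting $\eps_0 := \min \set{\eps_{ij}}{i,j \in \{\low, \upp\}}$ (together with any threshold already required by Theorem~2.4) produces the desired uniform bound.

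Since both input results are already in hand, there is no genuine obstacle here; the only bookkeeping to be careful about is verifying that the block decomposition of the wave operators survives passage to the perturbed dynamics (so that $S_{ij}^{(\eps)} = P_i S^{(\eps)} \iota_j$ really holds), which follows by reading off the definitions as in~\eqref{eq:scattering-matrix}. The argument mirrors Cor.~5.3 of~\cite{hpw:14} applied to our concrete two-channel setup.
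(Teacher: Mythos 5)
Your proposal is correct and takes the same route the paper intends: the paper's ``proof'' is just the one-line observation that \Thm{2.4} (strong convergence of the scattering operators, uniformly over the class $\Met_{r_0}(M,\gEucl,\gamma,\eps)$) combined with \Thm{1.9} (all entries $S_{ik}(H,H_0,J)\neq 0$) gives the conclusion, exactly as in Cor.~5.3 of~\cite{hpw:14}. Your write-up makes explicit the block decomposition $S_{ij}^{(\eps)} = P_i S^{(\eps)}\iota_j$ and the choice of a witness vector $f^{(ij)}$ for each entry, which is precisely the bookkeeping the paper leaves to the reader.
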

%
\section{Examples}
\label{sec:examples}
%

We first illustrate \Thm{2.3} in the special case where the 
perturbed metric on $M$ is associated with the graph of a 
function $f \colon M \to \R$ of class $\Contspace[2]$. As usual, 
we define $\Phi \colon M \to \R^3$ by $\Phi(p) := (p,f(p))$ and  
\begin{equation*}
  g = g_f = J_\Phi^{\mathrm T} \cdot J_\Phi 
  =
  \begin{pmatrix}
    1 + f_x^2 & f_x f_y \\
    f_x f_y & 1 + f_y^2 \\
  \end{pmatrix}, 
\end{equation*}
 where $J_\Phi$ is the Jacobian of $\Phi$.
The eigenvalues of $g$ are $1$ and $\det g = 1 + f_x^2 + f_y^2$. 
The curvature $\kappa$ of $\MM := (M,g)$ is given by the well-known formula
\begin{equation}
  \label{eq:curvature}
  \kappa
  = \frac {f_{xx} \cdot f_{yy} - f_{xy}^2} {(1 + f_x^2 + f_y^2)^2} 
  = \frac {\det H_f} {\det^2 g}   
\end{equation}
(cf.~\cite[p.~163]{docarmo:76}, \cite[eqn.~(14.105)]{gilbarg-trudinger:83}), 
where $H_f$ is the Hessian of $f$. We let 
\begin{equation}
  \label{eq:harm.rad.ex}
  d_0(p) := \min\{1, \dist(p,\Qm), \dist(p,\Qp) \} , \qquad p \in M, 
\end{equation}
where the distances are measured in $(M,\gEucl)$. We have the following
proposition.
\begin{proposition} 
  \label{prp:3.1} 
  Let $\map f M \R$ be of class $\Contspace[2]$ with bounded
  first and second order derivatives and suppose that
  \begin{equation}
    \label{eq:3.3}
    \int_M |\nabla f|^2  d_0^{-4} \dd p < \infty.  
  \end{equation}
  Then the wave operators $W_\pm(H_g, H_\gEucl, J)$ exist and are complete. 
\end{proposition}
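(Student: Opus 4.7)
The plan is to apply \Thm{2.3} directly, which reduces the task to a verification of its four hypotheses for the metric $g = g_f$: (i)~a choice of weight $\map{r_0} M {(0,1]}$ satisfying~\eqref{eq:2.3}; (ii)~membership $g \in \Met_{r_0}(M)$; (iii)~quasi-isometry of $g$ and $\gEucl$; and (iv)~finiteness of $\wt d_1(g,\gEucl)$. I would set $r_0(p) := c\, d_0(p)$ with a small constant $c \in (0, 1/2]$ to be fixed below. Since $d_0(p) \le 1$ and $d_0(p) \le \min\{\dist(p,\Qm),\dist(p,\Qp)\}$, condition~\eqref{eq:2.3} is then automatic.

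Quasi-isometry follows from the computation preceding the proposition: the eigenvalues of $g_f$ are $1$ and $1+|\nabla f|^2$, and the boundedness of $|\nabla f|$ confines them to a uniform interval $[1, 1+L]$, which gives $\wt d_\infty(g,\gEucl) < \infty$. The curvature condition of \Def{2.1} is handled via~\eqref{eq:curvature}: since the first- and second-order derivatives of $f$ are uniformly bounded, the Gau{\ss} curvature $\kappa$ is uniformly bounded, $|\kappa(p)| \le K_0$, and choosing $c$ so small that $c^2 K_0 \le 1$ makes the lower bound $\Ric^-_\MM(y) \ge -1/r_0(p)^2$ on $B_{r_0(p)}(p)$ trivial.

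For the weighted integral condition, only the non-trivial eigenvalue contributes, giving
\begin{equation*}
  \wt d(\gEucl,g)(p)
    = \bigl(1+|\nabla f(p)|^2\bigr)^{1/2} - \bigl(1+|\nabla f(p)|^2\bigr)^{-1/2}
    = \frac{|\nabla f(p)|^2}{\sqrt{1+|\nabla f(p)|^2}}
    \le |\nabla f(p)|^2.
\end{equation*}
Combined with $r_0(p)^{-4} = c^{-4} d_0(p)^{-4}$ and hypothesis~\eqref{eq:3.3}, this yields
$\wt d_1(g,\gEucl) \le c^{-4} \int_M |\nabla f|^2 d_0^{-4} \dd p < \infty$, as required.

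The main obstacle is the remaining part of~\Def{2.1}, namely the lower bound on the homogenized injectivity radius $\iota_{(M,g)}(p) \ge r_0(p)$: near the branch points the Euclidean injectivity radius already collapses like the distance to $\Qpm$, and one must control the additional degradation coming from $g$. This is precisely what \App{lower-bound-inj-rad} is designed to handle: starting from the comparison theorem of M\"uller-Salomonsen~\cite{mueller-salomonsen:07} on $\R^2$ and transferring it by cut-off and extension arguments through $\R^2\setminus\{0\}$ to $M$, one obtains $\iota_{(M,g)}(p) \ge c'\, d_0(p)$ under the quasi-isometry and curvature bounds already established, with a constant $c' > 0$ independent of $p$. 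A final shrinkage of $c$ below $c'$ secures $\iota_{(M,g)}(p) \ge r_0(p)$. All hypotheses of \Thm{2.3} are thereby verified, and existence and completeness of the wave operators asserted in the proposition follow.
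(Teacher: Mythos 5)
Your proposal is correct and follows essentially the same route as the paper: choose $r_0 = c\, d_0$, verify quasi-isometry from the bounded first derivatives, verify the curvature bound from the bounded second derivatives and shrink $c$ accordingly, invoke \Prp{b.5} for the injectivity radius lower bound $\gtrsim d_0(p)$, and bound $\wt d_1(\gEucl,g)$ by $c^{-4}\int_M |\nabla f|^2 d_0^{-4}\,\dd p$ before applying \Thm{2.3}. The only difference is cosmetic (you treat the injectivity radius last as the ``main obstacle,'' the paper handles it in the middle).
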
 
\begin{proof}
  Since $g = g_f$ has the eigenvalues $1$ and $1 + f_x^2 + f_y^2$ with
  $f_x$, $f_y$ bounded, the metric $g$ is quasi-isometric to the
  Euclidean metric on $M$.  We now choose a suitable function $r_0$
  which then defines the class $\Met_{r_0}(M)$.
  Note that the choice of $r_0$ is not unique, and one may 
  obtain different results for different choices. 
  In view of eqn.~\eqref{eq:2.3} the simplest choice appears to be
  $r_0 := \rho\, d_0$ with a constant $\rho \in (0, 1/2]$ which we are
  going to fix now.

  Since $f$ has bounded second order derivatives, the curvature of
  $(M,g_f)$ is bounded in absolute value by some constant $K \ge 0$
  and the second condition in Eqn.~\eqref{eq:2.2} is satisfied
  provided $\rho \le 1/\sqrt K$.  According to \Prp{b.5} there exists
  a constant $c_0 > 0$ such that the (homogenized) injectivity radius
  of $(M,g)$ at $p \in M$ is bounded from below by $c_0 d_0(p)$. We
  may thus pick any $\rho > 0$ satisfying $\rho \le \min\{1/2, 1/\sqrt
  K, c_0\}$.

  In remains to show that $\wt d_1(\gEucl,g)$ as in eqn.~(2.6) is
  finite. Here we first estimate
  \begin{equation*}
    \wt d(\gEucl,g) = \sqrt{1 + f_x^2 + f_y^2} - \frac 1 {\sqrt{1 + f_x^2 + f_y^2}} \le 
    |\nabla f|^2; 
  \end{equation*}
  therefore condition~\eqref{eq:3.3} implies $\wt d_1(\gEucl,g) <
  \infty$. Since the assumptions of \Thm{2.3} are satisfied, we may
  conclude that the wave operators for the pair ($H_\gEucl,H_g)$ exist and
  are complete.
\end{proof}

\begin{remarks}
  \indent
  \begin{myenumerate}{\roman}
  \item Condition~\eqref{eq:3.3} is satisfied if $\nabla f$ is
    square integrable at infinity and decays near $\Qm$ and $\Qp$ like
    \begin{equation*}
      |\nabla f(p)| \le \min\{\dist(p,\Qm), \dist(p,\Qp) \}^{1 + \beta}
    \end{equation*}
    for some $\beta > 0$.

  \item It is illuminating to take a look at other choices of $r_0$
    where $r_0$ tends to zero at infinity.  The class of admissible
    functions $f \colon M \to \R$ that define the perturbed metric
    changes in the following way. On the one hand, the injectivity
    radius associated with the metric $g$ may now go to zero at
    infinity and the (Gau{\ss}) curvature need no longer be bounded
    from below by a constant; on the other hand, it is now more
    difficult to satisfy the weighted integral
    condition~\eqref{eq:3.3}.
  \end{myenumerate}
\end{remarks}

In an analogous way one can indicate simple conditions on $f$ which
allow the application of \Thm{2.4}. We consider functions $f \colon M
\to \R$ of class $\Contspace[2]$ with first and second order
derivatives bounded by some constant $C$ and which are such that $g_f
\in \Met_{r_0}(M)$ with $r_0$ as above.  Then $\wt d_\infty(g_f,
\gEucl) \le C^2$, and we may now choose $\gamma := C^2$.  For $\eps >
0$, the condition $\wt d_1(g_f,\gEucl) \le \eps$ is safisfied if
\begin{equation}
  \label{eq:3.4}
  \int_M |\nabla f|^2  d_0^{-4} \dd p \le \eps;    
 \end{equation}
 in this case, we have $g_f \in \Met_{r_0}(M, \gEucl, \gamma, \eps)$ and
 the results of \Thm{2.4} and \Cor{2.5} apply.
\begin{proposition} 
  Suppose we are given a sequence $(f_n) \subset \Cont[2] M$ enjoying
  the following properties:
  \begin{enumerate}
  \item There is a constant $C \ge 0$ such that $|\partial_j f_n(p)|
    \le C$ and $|\partial_{ij} f_n(p)| \le C$ for all $p \in M$ and
    all $n \in \N$.

  \item We have
    \begin{equation}
      \label{eq:3.5}
      \int_M |\nabla f_n|^2  d_0^{-4} \dd p  \to 0, \qquad n \to \infty. 
    \end{equation}
  \end{enumerate}
  Let $g_n$ denote the metric induced by $f_n$ and let $I_n$ the
  associated natural identification operator, as above.  
  Then the scattering operators $S(H_{g_n}, H_0, I_n J)$ exist and
  converge strongly to $S(H_\gEucl, H_0, J)$, as $n \to \infty$.
\end{proposition}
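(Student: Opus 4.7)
The plan is to verify, with constants uniform in $n$, that each metric $g_n$ lies in a class $\Met_{r_0}(M, \gEucl, \gamma, \eps_n)$ with $\eps_n \to 0$, and then to invoke \Thm{2.4} directly. The construction essentially repeats the argument from the proof of \Prp{3.1}; the new feature is that we need \emph{uniformity in $n$} of all auxiliary constants.

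First I would set $r_0(p) := \rho\, d_0(p)$ with $d_0$ as in~\eqref{eq:harm.rad.ex}, and choose $\rho \in (0,1/2]$ small enough that $g_n \in \Met_{r_0}(M)$ for every $n$ simultaneously. The uniform second-order bound $|\partial_{ij} f_n| \le C$ together with formula~\eqref{eq:curvature} gives a pointwise bound $|\kappa_n(p)| \le K$ on the Gauss curvature of $(M, g_n)$ with a constant $K = K(C)$ independent of $n$. The construction underlying \Prp{b.5} depends only on the quasi-isometry constant and the $C^2$-bounds of the perturbation (hence on $C$), so it supplies a constant $c_0 = c_0(C) > 0$, independent of $n$, with $\iota_{(M,g_n)}(p) \ge c_0\, d_0(p)$. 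Choosing $\rho \le \min\{1/2,\, K^{-1/2},\, c_0\}$ then places every $g_n$ in $\Met_{r_0}(M)$.

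Next I would verify the two remaining conditions defining $\Met_{r_0}(M, \gEucl, \gamma, \eps_n)$. The eigenvalues of $g_n$ are $1$ and $1 + |\nabla f_n|^2 \le 1 + 2C^2$, which yields a uniform quasi-isometry bound $\wt d_\infty(\gEucl, g_n) \le \gamma$ for some $\gamma = \gamma(C)$. Using the pointwise estimate
\begin{equation*}
  \wt d(\gEucl, g_n)(p)
  = \sqrt{1 + |\nabla f_n(p)|^2} - \frac{1}{\sqrt{1 + |\nabla f_n(p)|^2}}
  \le |\nabla f_n(p)|^2
\end{equation*}
established in the proof of \Prp{3.1}, I obtain
\begin{equation*}
  \wt d_1(\gEucl, g_n)
  \le \rho^{-4} \int_M |\nabla f_n|^2\, d_0^{-4} \dd p =: \eps_n,
\end{equation*}
and $\eps_n \to 0$ by hypothesis~\eqref{eq:3.5}. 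Consequently $g_n \in \Met_{r_0}(M, \gEucl, \gamma, \eps_n)$ for all $n$, with $\eps_n \to 0$. \Thm{2.3} then gives existence and completeness of the wave operators needed to define each $S(H_{g_n}, H_0, I_n J)$, and \Thm{2.4} supplies the strong convergence to $S(H_\gEucl, H_0, J)$.

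The only delicate point is the \emph{uniformity in $n$} of the injectivity radius bound $c_0$. If \App{lower-bound-inj-rad} is phrased only qualitatively for a single fixed metric, one would need to trace through its proof and check that the constant depends solely on $C$ (and on the ambient geometry of $(M, \gEucl)$, which is fixed); since the M\"uller--Salomonsen comparison used there is quantitative and our family $(g_n)$ has $C^2$-data controlled by a single constant $C$, this dependence should indeed be uniform. I expect this bookkeeping step to be the main (but essentially routine) obstacle.
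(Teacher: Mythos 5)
Your proof is correct and follows exactly the route the paper sketches (the paper states this proposition without a formal proof, relying on the explanatory paragraph just before it): verify $g_n \in \Met_{r_0}(M,\gEucl,\gamma,\eps_n)$ with $\gamma$ fixed and $\eps_n \to 0$, then invoke Theorem~2.4. Your explicit tracking of the $\rho^{-4}$ factor and of the $n$-uniformity of the injectivity radius constant from \Prp{b.5} is a sound and welcome piece of bookkeeping that the paper leaves implicit.
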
 


%
%

\appendix

%
\section{Self-Adjointness and Spectral Properties.}
\label{app:spectral-properties}  
%


In this appendix we study the Sobolev spaces $\Sobnsymb^1$
and Laplace-Beltrami operators on branched coverings of the
Euclidean plane. Here we are mainly interested in self-adjointness,
compactness properties, and the question of absolute continuity of the
Laplacian.

\subsection{Double covering with a single branch point.}
 
It is convenient to begin the analysis of the Laplacian on branched
coverings with the case of a single branch point, i.e., we look at a
real version of the Riemann surface of $\sqrt z$. In the case of a
single branch point one can use separation of variables in polar
coordinates.  We take the liberty of using the same symbols $M_0$,
$\MM_0$, $H_0$ etc.\ as in the case of two branch points.  For most of
our results the corresponding analogue for the case of two branch
points will be immediate; cf.\ \Sec{a.2}.

Let $M_0$ denote the $\Cispace$-manifold obtained by joining two
copies of $\R^2$ along the line $\{(x,0) \in \R^2 \mid x \le 0\}$ in
the usual crosswise fashion. Equipped with the Euclidean metric tensor
$\gEucl = (\delta_{ij})$ we obtain the Riemannian manifold ${\MM}_0 =
(M_0, \gEucl)$ with the single branch point $(0,0)$.  The origin
$(0,0)$ does not belong to $M_0$ and $\MM_0$ is not complete.
For $r > 0$ we let $B_r \subset M_0$ denote the set of points in
$M_0$ with distance less than $r$ from the origin; the ``discs''
$B_r$ form a two-sheeted covering of the punctured disc $\set{(x,y)
  \in \R^2}{0 < x^2 + y^2 < r^2}$.

In order to define the Laplacian ${H}_0$ of ${\MM}_0$ we consider the
Hilbert space $\HH_0 := \Lsqr {\MM_0}$, with scalar product
denoted by $\scapro \cdot \cdot$, and the Sobolev space $\Sobn
{{\MM}_0}$, given as the completion of $\Cci {M_0}$ with respect to
the norm $\norm[1] \cdot$ defined by
\begin{equation}
  \label{eq:A.1}
    \normsqr[1] \psi
    := \int_{M_0} \abssqr{\psi(x)} + \abssqr{\nabla \psi(x)} \dd x, 
    \qquad \psi \in \Cci {M_0}. 
\end{equation}  
Then ${H}_0$ is defined as the unique self-adjoint operator satisfying
$\dom(H) \subset \Sobn {M_0}$ and
\begin{equation}
  \label{eq:A.2}
  \scapro{{H}_0 u} v 
  = \int_{M_0} \nabla u \cdot \nabla {\conj v} \dd x, 
  \qquad u \in \dom({H}_0), \,\,\, v \in \Sobn {M_0}.
\end{equation}
By elliptic regularity, we have $\dom({H}_0) \subset \Sobloc[2] {M_0}$
and $H_0 u = -\Delta u$ for all $u \in \dom (H_0)$. More precisely, if
$u$ belongs to $\dom(H_0)$, then the restriction of $u$ to $M_0
\setminus \overline{B}_\eps$ belongs to $\Sob[2]{M_0 \setminus
  \overline B_\eps}$, for any $\eps > 0$. We note as an aside that
$\dom({H}_0) \not\subset \Sob[2]{M_0}$. Indeed, the function $\map u
{M_0} \R$, defined in polar coordinates by $u(r, \theta) := \frac 1
{\sqrt r} {\sin r} \cos \frac\theta2 $, satisfies $-\Delta u = u$ in
$M_0$. If we now take any smooth function $\map \phi {M_0} \R$ which
is $1$ on $B_1$ and vanishes outside of $B_2$, say, then $\phi u \in
\dom(H_0)$ but, by a straight-forward calculation, $(\phi u)_{xx}
\notin \Lsqr {M_0}$.

Another natural Sobolev space is the space $\Sob {M_0} = \SobW[2] 1
{M_0}$, consisting of all functions in $\Lsqr {M_0}$ that have first
order distributional derivatives in $\Lsqr {M_0}$.  For an open set
$\Omega \subset \R^d$ with smooth boundary, $\Sobn \Omega$ is
associated with a (weak form of) Dirichlet boundary conditions
while the Laplacian with form domain $\Sob
\Omega$ is called the Neumann Laplacian of $\Omega$.  In the case at
hand, however, the Sobolev spaces $\Sob {M_0}$ and $\Sobn {M_0}$ coincide. 
For completeness, we include the (standard) proof. 

\begin{lemma}
  \label{lem:A.1}
  We have $\Sob {M_0} = \Sobn {M_0}$.
\end{lemma}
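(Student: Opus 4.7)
The plan is to show that $\Cci{M_0}$ is dense in $\Sob{M_0}$, since the inclusion $\Sobn{M_0} \subseteq \Sob{M_0}$ is immediate from the definitions. Given $u \in \Sob{M_0}$, I would approximate by a diagonal sequence combining four stages: cut-off at infinity, value truncation, cut-off near the branch point, and mollification.

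The cut-off at infinity (multiply by $\phi_R \in \Cci{M_0}$ equal to $1$ on a geodesic ball of radius $R$ around the origin with $|\nabla \phi_R| \le C/R$) and the mollification in local Euclidean charts on $M_0$ away from the branch point (where $M_0$ is smooth and locally flat) are routine. The heart of the argument is cutting off near the branch point, where one exploits the fact that a single point in $\R^2$ has zero $\Sobsymb^1$-capacity. In polar-like coordinates $(r,\theta)$ around the branch point, with $\theta \in [0, 4\pi)$ reflecting the double cover, I would use the logarithmic cut-off
\begin{equation*}
\eta_\delta(r) :=
\begin{cases}
0, & r \le \delta^2, \\
\log(r/\delta^2) / \log(1/\delta), & \delta^2 \le r \le \delta, \\
1, & r \ge \delta,
\end{cases}
\end{equation*}
for $\delta \in (0, 1/2)$. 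A direct computation in polar coordinates yields $\int_{M_0} |\nabla \eta_\delta|^2 \, dx = 4\pi/\log(1/\delta) \to 0$ as $\delta \to 0$, so $\| u \nabla \eta_\delta \| \le \|u\|_\infty \, \| \nabla \eta_\delta \| \to 0$ whenever $u$ is bounded, while the complementary term $(\eta_\delta - 1) \nabla u$ tends to $0$ in $\Lsqr{M_0}$ by dominated convergence.

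To reduce to the bounded case, I would precede the cut-off by a value truncation $v_N := \max(-N, \min(N, \phi_R u))$, which remains in $\Sob{M_0}$ (truncation preserves weak differentiability) and satisfies $v_N \to \phi_R u$ in $\Sob{M_0}$ as $N \to \infty$. Applying the logarithmic cut-off to the bounded function $v_N$, mollifying the resulting compactly supported function that vanishes near the branch point, and finally extracting a diagonal sequence in $R$, $N$, $\delta$ produces the required approximation in $\Cci{M_0}$.

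The main obstacle is the absence of a Hardy-type inequality at a single point in two dimensions, which prevents a direct estimate of $\int_{M_0} |u|^2 |\nabla \eta_\delta|^2 \, dx$ for unbounded $u \in \Sob{M_0}$. The value truncation is introduced precisely to circumvent this obstruction: once $u$ has been made bounded, the logarithmic cut-off handles the branch point without losing mass or regularity, and the two-dimensional nature of the branch set means no boundary contribution survives.
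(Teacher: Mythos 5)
Your proposal is correct, and the core difficulty you identify (cutting off near the branch point when a point has zero $\Sobsymb^1$-capacity in two dimensions, which blocks any Hardy-type inequality for unbounded $u$) is exactly the one the paper addresses. However, you and the paper handle that cut-off by genuinely different means. The paper uses the simplest scale-invariant cut-off $\phi_k$ (vanishing on $B_{1/k}$, equal to $1$ outside $B_{2/k}$), for which $\norm{\nabla\phi_k}$ stays merely \emph{bounded} rather than tending to $0$; consequently $u_n\nabla\phi_k$ converges only \emph{weakly} to $0$ (its support shrinks to the origin while the $\Lsqr{}$-norm stays bounded), and the paper then passes to strong $\Sobsymb^1$-convergence by taking finite convex combinations (Mazur's lemma). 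You instead take the logarithmic cut-off $\eta_\delta$, for which $\norm{\nabla\eta_\delta}^2=4\pi/\log(1/\delta)\to 0$, so after the value truncation one gets \emph{strong} convergence directly and no convex-combination step is needed. Your route is thus more hands-on and self-contained (it makes the vanishing capacity visible in the computation), while the paper's is slightly more economical in that it does not need a cleverly chosen cut-off. Both arguments rely on first reducing to bounded $u$ by truncation, and both relegate the cut-off at infinity and mollification to routine steps; the only substantive divergence is strong versus weak control of the troublesome term $u_n\nabla(\text{cut-off})$.
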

\begin{proof}
  Let $0 \le u \in \Sob {M_0}$ and let $u_n := \min\{u, n\}$ for $n
  \in \N$.  Then $u_n \to u$ in $\Sob {M_0}$
  (cf.~\cite{gilbarg-trudinger:83}) and we see that $\Sob {M_0} \cap
  \Linfty {M_0}$ is dense in $\Sob {M_0}$. Consider a sequence of
  Lipschitz continuous functions $\phi_k \colon {M_0} \to [0,1]$ with
  the following properties: $\phi_k$ vanishes on $B_{1/k}$ and
  $\phi_k(x) = 1$ for $x \notin B_{2/k}$; furthermore, there exists a
  constant $c$ such that $|\nabla \phi_k(x)| \le c/k$, for all $k \in
  \N$.  For any $n \in \N$ fixed, we have $\phi_k u_n \to u_n$ in
  $\Lsqr {M_0}$ and $\nabla(\phi_k u_n) \to \nabla u_n$ weakly in
  $(\Lsqr {M_0})^2$, as $k \to \infty$.  Thus, for any $\eps > 0$,
  there exist $n_0 \in \N$ and a (finite) convex combination $v_\eps$
  of the $\phi_k u_{n_0}$ such that $\norm[1]{v_\eps - u_{n_0}} <
  \eps$. But $v_\eps \in \Sobn {M_0}$, and the result follows.
\end{proof}

By \Lem{A.1} 
there is only one self-adjoint extension of the Laplacian on $\Cci
{M_0}$ with form-domain contained in the Sobolev space $\Sob {M_0}$.
On the other hand, it is easy to see that ${H}_0$ is \emph{not}
essentially self-adjoint on $\Cci {M_0}$. Indeed, we may just follow
the line of arguments leading to~\cite[Thm.~X.11]{reed-simon-2}) for
the Laplacian in $\R^2$.  In the present situation, we use separation
of variables in polar coordinates $(r,\theta)$, with $r > 0$ and the
angle variable $\theta$ running through $[0, 4\pi)$ instead of
$[0,2\pi)$. The eigenvalues of the angular operator are now given by
$\kappa_\ell = - \frac1 4 \ell^2$ with $\ell \in \N_0$. As a
consequence, the corresponding radial operators (cf.~eqns.~(X.18)
in~\cite[loc.\ cit.]{reed-simon-2})
 \begin{equation}  
  \label{eq:radial-operators}
    h_\ell :=  -\frac{\dd^2}{\dd r^2} + \frac{\ell^2 - 1}{4 r^2} , \qquad \ell \in \N_0, 
\end{equation} 
 are not essentially self-adjoint on $\Cci{0,\infty}$  for $\ell = 0$ and for $\ell = 1$. 

\medskip

We next consider the Rellich compactness property. In the following
lemma we let $\chi_r$ denote the characteristic function of $B_r$. 
\begin{proposition}
  \label{prp:A.2}
  For all $R > 0$ the operators $ \chi_R ({H}_0 + 1)^{-1/2}$ and
  $({H}_0 + 1)^{-1/2} \chi_R$ are compact.
\end{proposition}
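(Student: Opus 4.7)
The plan is to reduce the problem to showing that multiplication by $\chi_R$ is compact as a map from $\Sobn{M_0}$, equipped with the norm $\norm[1] \cdot$ of~\eqref{eq:A.1}, into $\HH_0 = \Lsqr{M_0}$. Since $(H_0+1)^{-1/2}$ is bounded from $\HH_0$ into $\Sobn{M_0}$ (in fact an isometry for the graph norm, which is equivalent to $\norm[1]\cdot$), the compactness of $\chi_R (H_0+1)^{-1/2}$ would follow, and that of $(H_0+1)^{-1/2} \chi_R$ is then obtained by taking adjoints.

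The compactness of the multiplication operator combines two ingredients. First, for each $\eps \in (0,R)$, the annulus $A_\eps := B_R \setminus \overline{B_\eps}$ is a relatively compact open subset of the smooth (boundaryless) Riemannian manifold $M_0$, so by the classical Rellich--Kondrachov theorem any bounded sequence in $\Sobn{M_0}$ admits a subsequence converging in $\Lsqr{A_\eps}$. A diagonal extraction over $\eps_k = 1/k$ yields a subsequence $(u_n)$ converging in $\Lsqr{A_{\eps_k}}$ for every $k \in \N$. The second ingredient, which is the heart of the argument, is a uniform smallness bound near the branch point: I claim that there exist $\alpha > 0$ and $C \ge 0$ such that
\begin{equation*}
  \int_{B_\eps} \abssqr{u(p)}\, dp
  \le C \eps^\alpha \norm[1] u^2,
  \qquad u \in \Sobn{M_0},\ \eps \in (0,1).
\end{equation*}
Combining the two yields that $(\chi_R u_n)$ is Cauchy in $\HH_0$.

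The hard part is the smallness bound, which I would prove via Fourier decomposition in the angular variable. Using polar coordinates $(r,\theta)$ on the double cover with $\theta \in [0,4\pi)$ and writing $u = \sum_{n \in \Z} u_n(r) \e^{\im n \theta / 2}$ gives
\begin{equation*}
  \norm[1] u^2
  = 4\pi \sum_{n \in \Z} \int_0^\infty \Bigl( |u_n|^2 + |u_n'|^2 + \frac{n^2}{4 r^2}|u_n|^2 \Bigr)\, r\, dr.
\end{equation*}
For $n \ne 0$, the centrifugal term $n^2/(4r^2)$ yields the Hardy-type estimate
\begin{equation*}
  \int_0^\eps |u_n|^2\, r\, dr
  \le \eps^2 \int_0^\infty \frac{|u_n|^2}{r^2}\, r\, dr
  \le \frac{4\eps^2}{n^2} \int_0^\infty \frac{n^2}{4 r^2} |u_n|^2\, r\, dr,
\end{equation*}
and summing over $n \ne 0$ contributes at most $4\eps^2 \norm[1] u^2$. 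The main obstacle is the zero mode, for which no Hardy inequality is available; the key observation is that $u_0(r)$, viewed as a radial function on $\R^2$, belongs to $\Sob{\R^2}$ with norm controlled by $\norm[1] u$. One then invokes the two-dimensional Sobolev embedding $\Sob{\R^2} \hookrightarrow L^p(\R^2)$ for some $p > 2$ (for example $p = 4$) together with H\"older's inequality on the Euclidean disc $B_\eps \subset \R^2$ to get $\int_{B_\eps} |u_0|^2\, dx \le \abs{B_\eps}^{1 - 2/p} \|u_0\|^2_{L^p(\R^2)} \le C \eps^{2 - 4/p} \norm[1] u^2$. Summing the contributions of zero and nonzero modes produces the smallness bound with $\alpha = \min\{2, 2 - 4/p\}$, completing the argument.
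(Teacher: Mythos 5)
Your proof is correct and takes a genuinely different route from the paper. Both arguments exploit separation of variables in polar coordinates with the $4\pi$-periodic angular variable, but they differ in how compactness near the branch point is extracted. The paper proves that the Dirichlet Laplacian $H_{0;1}$ of $B_1$ has compact resolvent, by checking that each radial Friedrichs operator $h_\ell = -\tfrac{d^2}{dr^2} + \tfrac{\ell^2-1}{4r^2}$ on $(0,1)$ has discrete spectrum with lowest eigenvalue tending to $+\infty$; compactness of $\chi_R(H_0+1)^{-1/2}$ then follows by cutting off near the branch point and feeding the localized piece into the compact resolvent. You instead prove a quantitative tightness estimate $\int_{B_\eps}|u|^2 \le C\eps^\alpha \normsqr[1]{u}$ directly, via the Hardy inequality supplied by the centrifugal potential for modes $n\ne 0$ and the two-dimensional Sobolev embedding for the zero mode, and then conclude with the annulus-Rellich plus diagonalization argument. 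Your route yields a stronger quantitative conclusion and avoids any appeal to Sturm--Liouville spectral theory, at the modest price of a separate treatment of the $n=0$ mode, which is precisely the mode that sees neither a Hardy barrier nor the inner Dirichlet boundary that the paper's localization introduces.

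One point worth making explicit in a complete write-up: the claim that $u_0$, viewed as a radial function, belongs to $\Sob{\R^2}$ uses that a point has vanishing $\Sobspace[1]$-capacity in dimension two, so that $\Sobn{\R^2\setminus\{0\}} = \Sob{\R^2}$; this is the same phenomenon as the paper's \Lem{A.1}. You can sidestep the capacity fact entirely, though: the Sobolev inequality $\norm[L^4(\R^2)]{v} \le C \norm[1]{v}$ holds for $v\in\Cci{\R^2\setminus\{0\}}$ and hence, by density, for $u_0\in\Sobn{\R^2\setminus\{0\}}$ (the angular averages of an approximating sequence in $\Cci{M_0}$ supply the requisite approximants), which is all your H\"older step requires.
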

\begin{proof}
  It is clearly enough to show that the mapping $\Sob {M_0} \ni u
  \mapsto \chi_R u \in \Lsqr {M_0}$ is compact.  Away from the origin
  we may apply the standard Rellich Compactness Theorem, but we need a
  different argument in a neighborhood of the origin.
  \begin{myenumerate}{\roman}
  \item 
    \label{lem.1.2.i}
    Let us first show that the embedding $\Sob {M_0} \hookrightarrow
    \Lsqrloc {M_0}$ is compact.  Indeed, any compact subset $K \subset
    {M_0}$ can be covered by a finite number of discs $B_r(\Pt_i)$,
    $i=1,\ldots, n$ with suitable $n \in \N$, $0 < r < \dist\{K,
    (0,0)\}$, and $\Pt_i \in {M_0}$. Then $\clo{B}_r(\Pt_i) \subset
    {M_0}$ and each disc $B_r(\Pt_i)$ is (equivalent to) a Euclidean
    disc in $\R^2$.  We may then use a partition of unity subordinate
    to this covering of $K$ and we may apply the usual Rellich
    Compactness Theorem in each $B_r(\Pt_i)$.

  \item
    \label{lem.1.2.ii}
 Let us define the Dirichlet Laplacian ${H}_{0;1}$ of $B_1$ as the 
 (unique) self-adjoint operator with  quadratic form domain 
 $\Sobn {B_1}$ and with quadratic form~\eqref{eq:A.1}. 
 Using again separation of variables in polar coordinates as above, 
 we have to deal with the Friedrichs extension of the operators  
 $h_\ell$  on $\Cci{0,1}$, for $\ell \in \N_0$. 
 Each of the operators $h_\ell$ has purely discrete spectrum with the 
 lowest eigenvalue tending to $\infty$ as $\ell \to \infty$. 
  It follows that ${H}_{0;1}$ has compact resolvent.   
   
  \item
    \label{lem.1.2.iii}
    Let $(u_k) \subset \Sobn {M_0}$ and suppose that $u_k \to 0$
    weakly in $\Sob {M_0}$.  It is enough to show that $\chi_R u_k \to
    0$ in $\Lsqr {M_0}$ strongly, for all $R > 0$.

    Choose a (smooth) cutoff-function $\phi$ with support in
    $B_1$ and which is equal to $1$ in $B_{1/2}$. We then have
    $\phi u_k \in \Sobn {B_1}$ and $\phi u_k \to 0$
    weakly in $\Sobn{B_1}$. By the second part of this proof 
   $H_{0;1}$ has compact resolvent. This implies that $\phi u_k \to 0$ in
    $\Lsqr{B_1}$ since
    \begin{equation*}
      \norm{\phi u_k}^2 = \scapro{H_{0;1}^{-1}(\nabla(\phi u_k))}
           {\nabla(\phi u_k)}
    \end{equation*}
    where $\nabla(\phi u_k) \to 0$ weakly and
    $H_{0;1}^{-1}(\nabla(\phi u_k)) \to 0$ strongly in
    $\Lsqr{B_{1/2}}$.

    On the other hand, $B_R \setminus B_{1/2}$ is a relatively compact
    subset of ${M_0}$, and therefore $(1 - \phi) \chi_R u_k \to 0$ in
    $\Lsqr {M_0}$ by part~\itemref{lem.1.2.i} of this proof.
  \end{myenumerate}
\end{proof}

We next comment on the spectral properties of ${H}_0$. As ${H}_0 \ge
0$ we have $\spec {{H}_0} \subset [0,\infty)$. Clearly, $\spec[ess]
{{H}_0} \supset [0,\infty)$ and so $\spec {{H}_0} = \spec[ess] {{H}_0}
= [0,\infty)$.  All operators $h_\ell$ in~\eqref{eq:radial-operators}
have purely absolutely continuous spectrum since $0$ is not an
eigenvalue and the operators $h_\ell$ are purely a.c.\ in
$(0,\infty)$; cf., e.g., \cite[Satz~14.25]{weidmann:03}. It is then clear 
that ${H}_0$ is also purely a.c.; in other words, $H_0$ has no
singular continuous spectrum and has no eigenvalues.

\subsection{Double coverings with two branch points.}
\label{sec:a.2}

We now return to the manifold $\MM$ with two branch points and the
associated Laplacian $H$ as in \Sec{wave-ops-euclid}. As in the case of a single
branch point the Sobolev spaces $\Sob \MM$ and $\Sobn \MM$
coincide. Again, $H$ is not essentially self-adjoint on $\Cci M$. Also
$(H + 1)^{-1} \chi_R$ is compact for all $R > 0$ with $\chi_R$ as in
\Sec{wave-ops-euclid}. 
 The proofs require only some obvious modifications as
compared to the case of a single branch point.  As for the spectral
properties of $H$ it is again clear that $\spec {H} = \spec[ess] {H} =
[0,\infty)$ and it remains to deal with the question of absolute
continuity.  Here we refer to some work of
Donnelly~\cite{donnelly:99} and Kumura~\cite{kumura:10, kumura:13} who have
pertinent statements for complete manifolds which are asymptotically
Euclidean. It is clear from their proofs that the presence of a finite
number of branch points can be accomodated.

As an alternative, it is easy to adapt the En{\ss} method of
scattering (cf.\ e.g.\ \cite{reed-simon-3}) to exclude singular continuous spectrum of $H$.
The absence of eigenvalues can be obtained as in the Kato-Agmon-Simon
theorem in \cite{reed-simon-4}:

\begin{proposition} 
  The Laplacian $H$ of $(M,\gEucl)$ has no eigenvalues.
\end{proposition}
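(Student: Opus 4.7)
Since $H \ge 0$, any eigenvalue must lie in $[0,\infty)$. The plan, following the Kato-Agmon-Simon strategy referenced by the authors, is to dispose of $\lambda = 0$ by an elementary form identity and to handle $\lambda > 0$ by combining Rellich's uniqueness theorem on the two Euclidean ends of $\MM$ with an Aronszajn-type unique continuation argument on the smooth manifold $M$.

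For $\lambda = 0$: if $Hu = 0$, the quadratic form identity \eqref{eq:1.4} gives $0 = \scapro{Hu}{u} = \int_M |\nabla u|^2 \dd x$, so $u$ is locally constant; connectedness of $M$ together with $u \in \Lsqr \MM$ then forces $u \equiv 0$.

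For $\lambda > 0$, suppose $u \in \dom(H)$ satisfies $Hu = \lambda u$. Elliptic regularity away from the branch points gives $u \in \Sobloc[2]{M}$ with $-\Delta u = \lambda u$ in local Euclidean coordinates. Restricted to either sheet of the exterior region $M_\ext$ introduced in \eqref{eq:1.7b}, $u$ is an $\Lsqrspace$-solution of the Helmholtz equation $-\Delta v = \lambda v$ on the exterior of a disk in $\R^2$. A polar-coordinate expansion there writes each angular mode of $v$ as a linear combination of Hankel functions $H_n^{(1,2)}(\sqrt\lambda\, r)$, whose asymptotics $\sim r^{-1/2}$ as $r \to \infty$ are incompatible with the $\Lsqrspace$-condition $\int_R^\infty |v_n(r)|^2 r\, \dd r < \infty$; hence every coefficient vanishes and Rellich's uniqueness theorem yields $u \equiv 0$ outside a sufficiently large compact subset of $M$.

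Consequently $u$ vanishes on a nonempty open subset of $M$. Since $M$ is a smooth connected Riemannian manifold---the branch points $\Qpm$ having been removed---and $-\Delta - \lambda$ is elliptic with smooth coefficients on $M$, Aronszajn's unique continuation principle forces $u \equiv 0$ globally, contradicting $u \ne 0$. The main obstacle I anticipate is verifying that unique continuation can be propagated entirely along the smooth locus---from a Euclidean end, through the ``neck'' region between the two sheets, to any other point of $M$---without needing to cross a branch point; this is where the branched geometry could in principle obstruct the argument, but since the $\Qpm$ are excised and the propagation never leaves the smooth manifold $M$, the standard Aronszajn theorem applies without modification.
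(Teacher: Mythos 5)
Your proof is correct but takes a genuinely different route from the paper's. For $\lambda = 0$ the two arguments coincide (the form identity forces $\nabla u = 0$, so $u$ is constant on the connected manifold, hence $0$ since $u\in\Lsqr\MM$). For $\lambda > 0$ the paper follows the Kato--Agmon--Simon theorem from Reed--Simon IV, i.e.\ an Agmon-type exponentially weighted $\Lsqrspace$-estimate propagated outward, with the modification that (since $H$ is not essentially self-adjoint on $\Cci{M}$) one approximates the eigenfunction by compactly supported smooth functions in $\dom(H)$. You instead combine Rellich's uniqueness lemma on the two exactly Euclidean ends $M_\ext$ (the Hankel-function asymptotics $\sim r^{-1/2}$ are incompatible with $\int_2^\infty |a_n(r)|^2\, r\,\dd r < \infty$) with Aronszajn unique continuation across the smooth connected manifold $M$, correctly noting that the branch points $\Qpm$ are excised so there is no obstruction to propagating the vanishing. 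Both routes are valid. Yours is arguably more transparent for this specific flat model, since the ends are literally Euclidean and Rellich applies verbatim, whereas the paper's weighted-estimate approach is more robust under the metric perturbations studied later, where the ends are only asymptotically flat and the Hankel expansion no longer applies.
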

\begin{proof}
  Clearly, $0$ cannot be an eigenvalue of $H$ since an eigenfunction
  for the eigenvalue $0$ would have to be constant.  Positive
  eigenvalues can be excluded by following the proof of the
  Kato-Agmon-Simon Theorem~\cite[Thm.~XIII.58]{reed-simon-4} with some
  obvious modifications and simplifications.  In the case at hand, the
  operator $H$ is not essentially self-adjoint on $\Cci M$, but any
  eigenfunction $\psi$ of $H$ is clearly in $\Ci M$ and there is a
  sequence of smooth functions $\psi_n \in \dom(H)$, vanishing outside
  the radius $n$, such that $\psi_n \to \psi$ and $H \psi_n \to H
  \psi$ in $\Lsqr M$, as $n \to \infty$.
\end{proof}  

For the present paper it is quite useful---albeit not essential---to know
that the Laplacian of $(M,\gEucl)$ is purely absolutely continuous. Of
course, it is also natural to ask whether the operators $H_g$ on $M$
with metric $g$ as in \Sec{pert-metric} are purely absolutely
continuous. Here the papers \cite{donnelly:99} and \cite{kumura:10, kumura:13}
mentioned above give sufficient conditions.


%
\section{Stationary Phase Estimates}
\label{app:stationary-phase}  
%

We refer to~\cite{reed-simon-3} for the basics of stationary phase
estimates.  In this appendix we consider two functions $\psi_1$,
$\psi_2 \in \Schwartz \R$ with $\norm{\psi_1} = \norm{\psi_2} = 1$,
and we let $u_0 := \psi_1 \otimes \psi_2 \in \Schwartz{\R^2}$.
We let $h_0$ denote the (unique) self-adjoint extension of $-\frac
{\dd^2}{\dd x^2}$ on $\Cci \R$ and we let $A_0$ denote the (unique)
self-adjoint extension of $-\Delta$ on $\Cci{\R^2}$ so that $A_0
= h_0 \otimes I_{\{y\}} + I_{\{x\}} \otimes h_0$.  We then write
\begin{equation}
  \label{eq:a.1}
   \Psi_i
   = \Psi_i(x,t)
   := \e^{-\im th_0} \psi_i, \qquad i = 1, 2 \quad t \in \R; 
\end{equation}
in particular, we have  
\begin{equation}
  \label{eq:a.2}
  \e^{-\im t A_0} u_0
  = (\e^{-\im th_0} \psi_1) \otimes  (\e^{-\im th_0} \psi_2) 
  = \Psi_1(\cdot,t) \otimes \Psi_2(\cdot,t), 
  \qquad t \in \R.  
\end{equation}
We will be using the following basic estimate on the real line where
$\hat \psi = \FF \psi$ denotes the Fourier transform for $\psi \in
\Schwartz \R$. It is clearly enough to consider $t \ge 0$, in the sequel. 
\begin{lemma}
  \label{lem:a.1}
  Let $\psi \in \Schwartz \R$ with $\norm{\psi} = 1$ and let $\Psi =
  \Psi(\cdot,t) := \e^{-\im t h_0} \psi$.
  \begin{myenumerate}{\roman}
  \item
    \label{lem.a.1.i}
    Suppose $\hat \psi \in \Cci \R$ and let $a \ge 0$ be such that
    $\supp \hat\psi \subset [-a,a]$.  We then have: For any $m \in \N$
    there exists a constant $c_m \ge 0$ such that
    \begin{equation}
      \label{eq:a.3}
      \abs{\Psi(x,t)} \le c_m (1 + |x| + t)^{-m} , 
      \qquad \abs x \ge 2at. 
    \end{equation}

  \item
    \label{lem.a.1.ii}
    Suppose there exists $s \ge 0$ such that $\supp \hat\psi \subset
    [s,s+1]$. We then have: For any $m \in \N$ there exists a
    constant $c_m \ge 0$ such that
    \begin{equation}
      \label{eq:a.4}
      \abs{\Psi(x,t)} \le c_m (1 + |x - 2st| + t )^{-m}, 
      \qquad  x \le 2st, 
    \end{equation}
    for all $m \in \N$, where the constant $c_m$ can be chosen
    independently of $s$.
  \end{myenumerate}
\end{lemma}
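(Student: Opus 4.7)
The plan is to apply the standard non-stationary phase argument to the Fourier representation
\[
\Psi(x,t) = (2\pi)^{-1/2} \int_\R \e^{\im\phi(k)}\hat\psi(k)\dd k,
\qquad \phi(k) := xk - tk^2.
\]
Using the first-order operator $L := (\im\phi'(k))^{-1}\partial_k$, which satisfies $L\e^{\im\phi} = \e^{\im\phi}$, I would integrate by parts $m$ times; boundary terms vanish since $\hat\psi$ has compact support, and the result is
\[
\Psi(x,t) = (2\pi)^{-1/2}\int_\R \e^{\im\phi(k)}\, M_m\hat\psi(k)\dd k,
\]
where $M_m\hat\psi$ is the $m$-fold application of the formal transpose $M := -\partial_k\circ(\im\phi'(k))^{-1}$.

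The crucial ingredient is a lower bound on $|\phi'(k)| = |x - 2tk|$ throughout the momentum support. For part~\itemref{lem.a.1.i}, the stationary point $k^\ast = x/(2t)$ satisfies $|k^\ast| \ge a$ precisely when $|x| \ge 2at$; since $\hat\psi \in \Cci\R$, I may enlarge $a$ slightly so that $\supp\hat\psi \subset [-a+\delta, a-\delta]$ for some $\delta > 0$, yielding $|\phi'(k)| \ge 2t\delta$ on the support. A short case split (large $|x|$ versus the $t$-dominated regime, exploiting $|x| \ge 2at$ throughout) then upgrades this to $|\phi'(k)| \ge c(|x|+t)$ with $c = c(a,\delta) > 0$. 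For part~\itemref{lem.a.1.ii} I would assume analogously $\supp\hat\psi \subset [s+\delta, s+1-\delta]$; the hypothesis $x \le 2st$ forces $k^\ast \le s$, and a direct computation gives
\[
|\phi'(k)| = 2tk - x \ge 2t(s+\delta) - x = (2st - x) + 2t\delta \ge c'(|x-2st| + t).
\]
The uniformity of $c_m$ in $s$ relies on $\hat\psi$ being a translate $\phi_2(\cdot - s)$ of a fixed $\phi_2 \in \Cci{(0,1)}$: both $\delta$ and every Sobolev norm of $\hat\psi$ are then manifestly independent of $s$.

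With the lower bound in hand, the derivatives of $g(k) := 1/\phi'(k)$ can be computed explicitly as $g^{(n)}(k) = n!(2t)^n(x-2tk)^{-(n+1)}$, and the elementary inequality $t \le |x|+t$ gives $|g^{(n)}(k)| \le C_n/(|x|+t)$ (respectively $C_n/(|x-2st|+t)$) throughout the support. A routine induction shows that $M_m\hat\psi$ is a finite sum of terms, each consisting of $m$ factors of the type $g^{(i)}$ times one derivative of $\hat\psi$ of order $\le m$; summing the bounds and integrating over the bounded-measure support produces $|\Psi(x,t)| \le c_m(|x|+t)^{-m}$ (respectively $c_m(|x-2st|+t)^{-m}$). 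To replace this by $(1+|x|+t)^{-m}$ (resp.\ $(1+|x-2st|+t)^{-m}$) as in~\eqref{eq:a.3}--\eqref{eq:a.4}, one just observes that in the small regime $|x|+t \le 1$ the uniform bound $|\Psi(x,t)| \le (2\pi)^{-1/2}\|\hat\psi\|_{\Lpspace[1]}$ already suffices, absorbing the ``$+1$'' into the constant. The main technical obstacle is the lower bound on $|\phi'(k)|$, which genuinely degenerates if the momentum support meets the extremal values $\pm a$ (respectively $s$, $s+1$); the buffer $\delta$ is what quantifies the strict interior containment, and this condition is automatically met in both applications of \Lem{a.1} within the proof of \Lem{1.7}.
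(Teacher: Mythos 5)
The paper does not prove this lemma but simply cites Reed--Simon, Appendix~1 to Section~XI.3, and your non-stationary phase integration-by-parts argument is precisely what that reference supplies. Your approach is correct; the buffer $\delta$ you require for the lower bound on $|\phi'|$ corresponds to the strict containments $\supp\phi_1\subset(-a,a)$ and $\supp\phi_2\subset(0,1)$ that the paper in fact imposes in its only application of the lemma, so the wrinkle you flag is harmless.
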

\Lem{a.1} is an immediate consequence of classical
stationary phase estimates, as discussed, e.g.,
in Appendix 1 to Section XI.3 of~\cite{reed-simon-3}. 
A motivation for these estimates is that
the ``classically allowed'' region for $\e^{-\im t A_0}(\psi_1 \otimes
\psi_2)$ at time $t \ge 0$ is contained in the rectangle $[-2at,2at] \times
[2st, 2(s+1)t]$ if $\psi_1$ and $\psi_2$ are as in
\Lemenum{a.1}{lem.a.1.i} and~\itemref{lem.a.1.ii}, respectively.

We use the estimates~\eqref{eq:a.1} and~\eqref{eq:a.2} in the
following lemma where
\begin{equation}
  \label{eq:a.5}
  Q_{s,t} 
  := [-st, st] \times [st, \infty)  \subset \R^2,
  \qquad t \ge 0, 
\end{equation}
and $\chi_{s,t}$ is the characteristic function of $Q_{s,t}$.

\begin{lemma}
  \label{lem:a.2}
  Let $u_0 = \psi_1 \otimes \psi_2$ as above where $\psi_1$ satisfies
  the assumptions of \Lem{a.1} $(i)$ and $\psi_2$ satisfies the
  assumptions of \Lemenum{a.1}{lem.a.1.ii}.

  We then have: for any $m \in \N$ there exists a constant $\wt{c}_m
  \ge 0$ such that for $t \ge 0$ and $s \ge 2a$
  \begin{equation}
    \label{eq:a.6}
    \norm{(1 - \chi_{s,t})  \e^{-\im tA_0}u_0}^2 
    \le \wt{c}_m (1 + st)^{1 - 2m}, 
  \end{equation}
  and
  \begin{equation}
    \label{eq:a.7}
    \norm{(1 - \chi_{s,t})  \nabla \e^{-\im tA_0}u_0}^2 
    \le \wt{c}_m (1 + st)^{1 - 2m}.  
  \end{equation}
\end{lemma}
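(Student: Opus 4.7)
The plan is to exploit the tensor product structure from~\eqref{eq:a.2}, namely $\e^{-\im t A_0} u_0 = \Psi_1(\cdot,t) \otimes \Psi_2(\cdot,t)$, together with the unitarity of $\e^{-\im t h_0}$, so that we reduce the two-dimensional estimate to two one-dimensional estimates.

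First I would decompose the complement of $Q_{s,t}$ into the two ``slabs''
\begin{equation*}
  R_1 := \{(x,y) \in \R^2 \mid |x| > st\},
  \qquad
  R_2 := \{(x,y) \in \R^2 \mid y < st\},
\end{equation*}
so that $(1-\chi_{s,t}) \le \1_{R_1} + \1_{R_2}$. Using $\norm{\Psi_i(\cdot,t)} = 1$, this yields
\begin{equation*}
  \norm{(1 - \chi_{s,t})  \e^{-\im tA_0}u_0}^2
  \le  \int_{|x| > st} \abssqr{\Psi_1(x,t)} \dd x
     + \int_{y < st}   \abssqr{\Psi_2(y,t)} \dd y.
\end{equation*}
For the first integral I would apply \Lemenum{a.1}{lem.a.1.i} to $\Psi_1$: since $s \ge 2a$, the region $|x|>st$ is contained in $|x| \ge 2at$, so $|\Psi_1(x,t)| \le c_m (1+|x|+t)^{-m}$, and a direct integration gives a bound of order $(1+st)^{1-2m}$. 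For the second, on $y < st$ we have $|y-2st| \ge st$, and \Lemenum{a.1}{lem.a.1.ii} provides $|\Psi_2(y,t)| \le c_m(1+|y-2st|+t)^{-m}$; the change of variable $u = 2st - y$ turns the integral into $\int_{st}^\infty (1+u+t)^{-2m} \dd u$, again of order $(1+st)^{1-2m}$. Combining the two bounds proves~\eqref{eq:a.6}.

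For the gradient estimate~\eqref{eq:a.7}, I would use the fact that
\begin{equation*}
  \nabla \e^{-\im t A_0} u_0
  = (\Psi_1' \otimes \Psi_2) \, \de x \oplus (\Psi_1 \otimes \Psi_2')\, \de y,
\end{equation*}
together with the key observation that since $\Psi_i' = \e^{-\im t h_0} \psi_i'$ and the Fourier transform $\FF(\psi_i')(k) = \im k \,\hat\psi_i(k)$ has the \emph{same} compact support as $\hat\psi_i$, \Lem{a.1} applies verbatim to $\Psi_i'$ (with new constants $c_m'$). The decomposition into $R_1$ and $R_2$, combined with $\norm{\Psi_i'(\cdot,t)} = \norm{\psi_i'}$ (still finite and $t$-independent), then leads to the same type of estimate.

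The only technical point I anticipate is the bookkeeping of the polynomial-decay integrals to ensure the sharp exponent $1-2m$ in $(1+st)$; in particular one must verify that $(1+|y-2st|+t)^{-2m}$ integrated over $y \in (-\infty, st)$ indeed gives the extra loss of a single power of $(1+st)$ and nothing worse. This is routine but needs the uniformity in $s$ asserted in \Lemenum{a.1}{lem.a.1.ii}, which ensures that the constant $\tilde{c}_m$ can be chosen independent of $s$.
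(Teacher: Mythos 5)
Your proof is correct and follows essentially the same route as the paper: reduce to one-dimensional estimates via the tensor-product structure and unitarity of $\e^{-\im t h_0}$, cover $Q_{s,t}^c$ by the two slabs, apply Lemma~B.1, and handle the gradient by commuting $-\im\,\de/\de x$ through $\e^{-\im t h_0}$. Your added observation that $\widehat{\psi_i'}(k) = \im k\,\widehat{\psi_i}(k)$ retains the same compact support, so Lemma~B.1 applies verbatim to $\psi_i'$, is a useful explicit justification of a step the paper leaves implicit.
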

\begin{proof}
  By \Lem{a.1} we have
  \begin{equation*}
    \int_{|x| > st} |\Psi_1(x,t)|^2 \dd x  
    \le 2 c_m^2 \int_{st}^\infty ( 1 + |x|)^{-2m} \dd x 
    = \frac{2 c^2_m} {2m-1} ( 1 + st)^{1-2m}  
  \end{equation*}
  and, similarly,
  \begin{equation*}
    \int_{y < st} \abssqr{\Psi_2(y,t)} \dd y 
    \le  \frac{c^2_m}{2m-1} ( 1 + st)^{1-2m}, 
  \end{equation*}
  for all $t > 0$ and $s \ge 2a$.  Using Eqn.~\eqref{eq:a.2}, we therefore
  obtain
  \begin{align*}
    \normsqr{(1 - \chi_{s,t}) \e^{-\im tA_0} u_0} 
      & \le C_0 \normsqr{\Psi_2(\cdot,t)}
                   \int_{|x| >  st}  \abssqr{\Psi_1(x,t)} \dd x\\
      &  \qquad\qquad {}+ C_0 \normsqr{\Psi_1(\cdot,t)}
                   \int_{y < st} |\Psi_2(y,t)|^2 \dd y\\
      & \le {\wt c}_m (1 + st)^{1-2m}
  \end{align*}
  with (non-negative) constants $C_0$, $\wt{c}_m$ that are independent
  of $s$.  This proves~\eqref{eq:a.6}.  For the
  estimate~\eqref{eq:a.7}, we use the well-known fact that $p := -\im
  \frac{\dd}{\dd x}$ and $\e^{-\im th_0} = \e^{-\im t p^2}$ commute,
  whence
  \begin{equation*}
    \nabla \e^{-\im t A_0} u_0 = (\e^{-\im th_0} \psi_1' \otimes \psi_2, 
    \psi_1 \otimes \e^{-\im th_0} \psi_2').
  \end{equation*} 
  Proceeding as above, we obtain~\eqref{eq:a.7} with a constant depending on
  $\norm{\psi_1'}$ and $\norm{\psi_2'}$.
\end{proof}

We are now ready to provide the basic estimate for the ``localization
error'' as in eqn.~\eqref{eq:1.24}. 

\begin{lemma}
  \label{lem:a.3}
  Suppose $\psi_1$ and $\psi_2$ are as in \Lem{a.2}.  In addition, let
  $\chi \in \Ci{\R^2}$ with $\chi$, $\nabla \chi$, $\Delta\chi$
  bounded, and such that $\supp \chi \subset \bigset{(x,y) \in \R^2}{
    \abs x \le 1 + \abs y}$.  Let $f = f(x,y,t) = - 2\im (\nabla\chi)
  \cdot \nabla \e^{-\im tA_0} u_0 - \im (\Delta \chi) \e^{-\im tA_0}
  u_0 $ with $u_0 = \psi_1 \otimes \psi_2$. We then have: For any
  $\eps > 0$ there exists $s_\eps \ge 0$ such that
  \begin{equation}
    \label{eq:a.8}
    \int_{-\infty}^\infty \norm{f(\cdot,\cdot,t)} \dd t < \eps, 
    \qquad s \ge s_\eps. 
  \end{equation} 
\end{lemma}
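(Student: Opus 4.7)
The plan is to exploit the geometric disjointness between $\mathcal R := \supp \nabla \chi \cup \supp \Delta \chi$ and the essential support $Q_{s,t}=[-st,st]\times[st,\infty)$ (for $t>0$; symmetric for $t<0$) of $u(t):=\e^{-\im tA_0}u_0$ given by \Lem{a.2}. The mollified cutoff $\chi$ from \Sec{wave-ops-euclid} is identically $1$ on $\{|x|\le|y|-c_0\}$ for some $c_0>0$, so $\mathcal R\subset\{c_0\le|x|-|y|\le 1\}$. Because $(x,y)\in Q_{s,t}$ forces $|x|\le st\le|y|$, one has $\mathcal R\cap Q_{s,t}=\emptyset$, with a quantitative separation of order $st$: on the parts of $\mathcal R$ that contribute non-negligibly, both $|x|$ and $|y-2st|$ are bounded below by a positive multiple of $st$.

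Concretely, I would bound
\[
\|f(t)\|\le 2\|\nabla\chi\|_\infty\,\|\mathbf 1_{\mathcal R}\nabla u(t)\|+\|\Delta\chi\|_\infty\,\|\mathbf 1_{\mathcal R} u(t)\|
\]
and apply the pointwise bounds of \Lem{a.1} on $\mathcal R$: since $|x|\gtrsim st\ge 2a|t|$ on $\mathcal R$ (using $s\ge 2a$), \Lemenum{a.1}{lem.a.1.i} controls both $\Psi_1$ and $\Psi_1'$ (whose Fourier transforms share the support $[-a,a]$ of $\hat\psi_1$); since $|y-2st|\gtrsim st$ on $\mathcal R$, \Lemenum{a.1}{lem.a.1.ii} controls $\Psi_2$ with the sharpened bound $|\Psi_2(y,t)|\le C_m(1+st)^{-m}$. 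For the $y$-derivative I would decompose $\Psi_2'=\im s\Psi_2+\tilde\Psi$, where $\tilde\Psi=\e^{-\im th_0}(\e^{\im s\cdot}g')$ has Fourier support in $[s,s+1]$ and satisfies \Lem{a.1} with constants independent of $s$, so its contribution to $\|f(t)\|$ has no $s$-factor. Multiplying the pointwise bounds for $\Psi_1$ and $\Psi_2$, integrating the square of the product over the thin set $\mathcal R$, and then integrating in $t$ (symmetrically for $t<0$) produces the desired bound.

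The principal obstacle is the factor of $s$ brought in by $\Psi_2'\approx \im s\Psi_2$: the scale-invariant identity $\int_0^\infty s(1+st)^{-\mu}\,dt=1/(\mu-1)$ shows that a crude application of \Lem{a.2} to the gradient yields only $\int\|f(t)\|\,dt=O(1)$ uniformly in $s$, rather than the required $o(1)$. The key refinement is that the distance $|y-2st|$ from the wave packet's trajectory to $\mathcal R$ is itself of order $st$ (not merely bounded below), so \Lemenum{a.1}{lem.a.1.ii} upgrades the decay of $\Psi_2$ on $\mathcal R$ from $(1+t)^{-m}$ to $(1+st)^{-m}$; combined with the matching decay of $\Psi_1$, the product $|\Psi_1\Psi_2|$ decays like $(st)^{-2m}$ and, on the thin strip $\mathcal R$, yields enough extra negative powers of $s$ beyond what \Lem{a.2} provides to absorb the linear $s$ from the gradient when $m$ is chosen sufficiently large. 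Given $\epsilon>0$, one fixes $m$ large enough so that the resulting constant times the residual negative power of $s$ is below $\epsilon$ for all $s\ge s_\epsilon$, completing the proof.
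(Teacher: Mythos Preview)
The paper's argument is much shorter than yours. Since $\nabla\chi$ and $\Delta\chi$ vanish on $Q_{s,t}$ (using, as you also do, that $\chi\equiv 1$ on the inner cone from the construction in Section~2), one has $\|f(t)\|\le C\bigl(\|(1-\chi_{s,t})u(t)\|+\|(1-\chi_{s,t})\nabla u(t)\|\bigr)$, and the paper simply invokes both~\eqref{eq:a.6} and~\eqref{eq:a.7} of \Lem{a.2} to obtain $\|f(t)\|\le C_m(1+st)^{(1-2m)/2}$, whose $t$-integral is $\tfrac{2C_m}{(2m-3)s}\to 0$. No decomposition of $\Psi_2'$ and no pointwise analysis on a thin strip enter.

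Your concern that~\eqref{eq:a.7} hides a factor of $s$ through $\|\psi_2'\|\sim s$ is legitimate---the paper's own proof of~\eqref{eq:a.7} says the constant depends on $\|\psi_2'\|$---and the paper glosses over it. However, the fix you propose does not work. Your assertion that ``$|x|\gtrsim st$ on $\mathcal R$'' (and hence $|\Psi_1\Psi_2|\lesssim(st)^{-2m}$) is false: the strip $\mathcal R\subset\{c_0\le|x|-|y|\le 1\}$ contains points arbitrarily close to the origin (take $y=0$, $|x|\in[c_0,1]$), so for $0\le y\le 2at$ you have $|x|\approx y=O(at)$, not $\gtrsim st$, and \Lemenum{a.1}{lem.a.1.i} gives no decay of $\Psi_1$ there. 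Symmetrically, near $y\approx 2st$ the quantity $|y-2st|$ is small and \Lemenum{a.1}{lem.a.1.ii} gives no extra gain for $\Psi_2$. A careful estimate over the strip yields only $\|\mathbf 1_{\mathcal R}u(t)\|\le C_m(1+st)^{-m}$, a mere factor $(1+st)^{-1/2}$ better than~\eqref{eq:a.6}. Multiplying by the linear $s$ from $\Psi_2'=is\Psi_2+\tilde\Psi$ and integrating gives $\int_0^\infty s(1+st)^{-m}\,dt=1/(m-1)$, still independent of $s$. So the additional decay you claim is not there, and the obstacle you correctly flagged is not removed by your refinement.
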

\begin{proof} 
  Without restriction we may assume $s \ge 2a$.  We only consider $t >
  0$, the case $t < 0$ being almost identical.

  The set $Q_{s,t} = (-st, st) \times (st, \infty)$ does not
  intersect the double cone $\set{(x,y)}{\abs y < \abs x} \subset
  \R^2$ and $\nabla\chi$ and $\Delta\chi$ vanish on $Q_{s,t}$.  Then
  the estimates~\eqref{eq:a.6} and~\eqref{eq:a.7} immediately imply
  that for any $m \in \N$ there exists a constant $C_m \ge 0$ such
  that
  \begin{equation*}
    \norm{f(\cdot,\cdot,t)} \le C_m (1 + st)^{(1-2m)/2},
    \qquad s \ge 2a. 
  \end{equation*} 
  We now fix some $m \ge 2$ and integrate with respect to $t$ to
  obtain
  \begin{equation*}
    \int_0^\infty \norm{f(\cdot,\cdot,t)} \dd t
    \le 
    C_m \int_0^\infty  ( 1 + ts)^{(1-2m)/2} \dd t < \infty, 
  \end{equation*} 
  where the integral on the right hand side tends to zero, as $s \to
  \infty$.
\end{proof}

%
\section{Lower Bounds for the Injectivity Radius}
\label{app:lower-bound-inj-rad}  
%

Lower bounds for the injectivity radius are crucial for the
applicability of our results to concrete examples. We are now going to
explain how a comparison result of M\"uller and
Salomonsen~\cite{mueller-salomonsen:07} can be used to deal with
various situations where the metric is associated with the graph of a
function on $\R^2$ or on $\R^2 \setminus \{(0,0)\}$.  These estimates
may be of independent interest. Appendix~D of~\cite{hpw:14} contains
related results for radially symmetric manifolds. Let us first recall
the basic comparison result:

\begin{proposition}[{\cite[Prop.~2.1]{mueller-salomonsen:07},%
    ~\cite[Prop.~D.1]{hpw:14}}]
  \label{prp:b.1}
  Let $M$ denote a smooth $n$-dimensional manifold.  Suppose that the
  Riemannian manifolds $\MM_0 := (M,g_0)$ and $\MM_1 : = (M,g_1)$
  are complete with quasi-isometric metrics $g_0$ and $g_1$, i.e.,
  \begin{equation*}
    \eta g_0 \le g_1 \le \eta^{-1} g_0,
  \end{equation*} 
  for some constant $\eta \in (0,1]$; cf.~\Def{2.2}.  Furthermore,
  suppose that the sectional curvature of $\MM_0$ and $\MM_1$ is
  bounded (in absolute value) by some constant $K \ge 0$. Let
  $\inj_{\MM_0}(p)$ and $\inj_{\MM_1}(p)$ denote the
  injectivity radius of $\MM_0$ and $\MM_1$, respectively, at the
  point $p \in M$.  We then have
  \begin{equation}
    \label{eq:inj.rad.est.1}
    \inj_{\MM_1}(p)
    \ge \frac12 \min 
           \Bigl\{ \frac {\eta^2 \pi} {\sqrt K}, 
                   \eta \inj_{\MM_0}(p) \Bigr\},
    \qquad  p \in M.
  \end{equation} 
\end{proposition}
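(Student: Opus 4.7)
The plan is to combine the classical Klingenberg characterization of the injectivity radius with Rauch's comparison theorem and a direct application of the quasi-isometry hypothesis. The starting point is Klingenberg's lemma: for any complete Riemannian manifold $\mathcal N$ and any $p \in M$,
\[
\inj_{\mathcal N}(p) \ge \min\bigl\{\mathrm{conj}_{\mathcal N}(p),\, \tfrac12 \ell_{\mathcal N}(p)\bigr\},
\]
where $\mathrm{conj}_{\mathcal N}(p)$ is the conjugate radius at $p$ and $\ell_{\mathcal N}(p)$ is the length of the shortest geodesic loop at $p$. Applied to $\mathcal M_1$, this reduces the proposition to establishing two separate lower bounds, one for each term in the minimum.

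The conjugate-radius estimate is essentially immediate: since the sectional curvature of $\mathcal M_1$ is bounded in absolute value by $K$, Rauch's comparison theorem yields $\mathrm{conj}_{\mathcal M_1}(p) \ge \pi/\sqrt K$; since $\eta \in (0,1]$, this already dominates $\eta^2 \pi/(2 \sqrt K)$ and thereby takes care of the first term in~\eqref{eq:inj.rad.est.1}.

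The heart of the argument is the lower bound $\ell_{\mathcal M_1}(p) \ge \eta \inj_{\mathcal M_0}(p)$, which I would obtain by contradiction. Let $\gamma$ be a shortest geodesic loop at $p$ in $\mathcal M_1$, of $\mathcal M_1$-length $L_1 = \ell_{\mathcal M_1}(p)$, and let $q$ be its midpoint. Splitting $\gamma$ at $q$ produces two distinct minimizing $\mathcal M_1$-geodesics $\sigma_1, \sigma_2$ from $p$ to $q$, each of $\mathcal M_1$-length $L_1/2$. The pointwise bound $g_1 \ge \eta g_0$ yields the curve-length comparison $L_{g_0}(\sigma_i) \le \eta^{-1/2} L_{g_1}(\sigma_i)$, so each arc has $\mathcal M_0$-length at most $\eta^{-1/2} L_1/2$. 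If $L_1$ is strictly below the claimed lower bound $\eta \inj_{\mathcal M_0}(p)$, then $q$ lies safely inside the $\mathcal M_0$-injectivity ball around $p$, and both $\sigma_i$ can be lifted via $(\exp_p^{\mathcal M_0})^{-1}$ to distinct $\mathcal C^1$-curves in $T_pM$ joining $0$ to the same point $\tilde q$. Combining this with Jacobi-field/Rauch estimates for $\exp_p^{\mathcal M_1}$ inside the Rauch radius $\pi/\sqrt K$ (available thanks to the first term of the minimum) should force the two lifts to coincide, contradicting their distinctness.

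The main obstacle is to keep careful track of the exponents of $\eta$. A single application of the quasi-isometry scales lengths only by the factor $\sqrt\eta$, whereas the statement asks for $\eta$ and $\eta^2$ in the constants. Recovering the correct powers requires two estimates to be combined: one comparing the endpoints of the two $g_1$-geodesics in the $g_0$-metric, and one comparing the $g_1$- and $g_0$-exponential maps on the common tangent space $T_pM$, both woven together with the Rauch control of Jacobi fields under the curvature bound $|K_{\mathcal M_i}| \le K$. This quantitative book-keeping is precisely the delicate part carried out in~\cite{mueller-salomonsen:07}, which is the reference cited in the statement.
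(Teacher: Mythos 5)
Note first that the paper does not prove Proposition~\ref{prp:b.1}: the result is stated with citations to \cite[Prop.~2.1]{mueller-salomonsen:07} and \cite[Prop.~D.1]{hpw:14} and is used as an external input, so there is no internal argument of the paper to compare against.

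Your outline correctly identifies the Klingenberg reduction (the injectivity radius is the minimum of the conjugate radius and half the length of the shortest geodesic loop) and the Rauch bound $\mathrm{conj}_{\mathcal M_1}(p)\ge\pi/\sqrt K$ for the first term. The gap is in the geodesic-loop step, and it is not merely an exponent-bookkeeping issue: as written, the contradiction does not materialize. You lift the two arcs $\sigma_1,\sigma_2$ of the short $g_1$-geodesic loop through $(\exp_p^{g_0})^{-1}$ to distinct curves in $T_pM$ joining $0$ to $\tilde q$, and then assert that Jacobi-field/Rauch estimates for $\exp_p^{\mathcal M_1}$ ``should force the two lifts to coincide.'' There is no reason for this. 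The lifts are, by construction, images of two genuinely distinct curves under a fixed diffeomorphism and hence distinct curves in $T_pM$ with the same endpoints --- which, in a ball, is no contradiction at all. Nor does the conjugate-radius control on $\exp_p^{g_1}$ help here: the local-diffeomorphism property of $\exp_p^{g_1}$ inside the radius $\pi/\sqrt K$ is perfectly \emph{compatible} with a short geodesic loop, since in $g_1$-normal coordinates the two arcs $\sigma_1,\sigma_2$ are precisely the two distinct radial segments $[0,rv_1]$ and $[0,rv_2]$, which terminate at different points of $T_pM$ yet both map to $q$; that is exactly what a geodesic loop looks like in exponential coordinates. Ruling out the loop requires a genuinely different mechanism (for instance, pulling $g_1$ back through $\exp_p^{g_0}$ to the star-shaped domain $B_{\inj_{\mathcal M_0}(p)}(0)\subset T_pM$ and estimating the injectivity radius of the origin in the pullback metric), and that is exactly the quantitative step your sketch defers to \cite{mueller-salomonsen:07}. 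As submitted, the proposal asserts the conclusion at the crucial point instead of proving it.
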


Note that the assumptions of \Prp{b.1} are global and that the
manifolds are assumed to be complete. We will use simple cut-offs and
also an extension procedure for functions of class $\Contspace[2]$ to
obtain local versions.

In the sequel, we will deal with the special case $n=2$, $\MM_0 =
(\R^2, \gEucl)$ and $\MM_1 = (\R^2, g_f)$ where the metric $g_f$ comes from
a function $f \colon \R^2 \to \R$ of class $\Contspace[2]$, as in
\Sec{examples}.  We start with the particularly simple case where the
first and second order derivatives of $f$ are bounded.

\begin{proposition}
  \label{prp:b.2}
  Let $f \in \Contspace[2](\R^2, \R)$ with bounded derivatives of the
  first and second order. Let $g = g_f$ as defined above, and let
  $\MM_1 := (\R^2, g_f)$. If $ \beta \ge 0$ and $\gamma > 0$ are constants such
  that
  \begin{equation*}
    |D_i f(p)| \le \beta, \qquad |D_{ij}(p)| \le \gamma, 
  \end{equation*}
  for all $p \in \R^2$ and $i,j \in \{1,2\}$, then the radius of
  injectivity of $\MM_1$ at $p \in \R^2$ satisfies
  \begin{equation}
    \label{eq:inj.rad.est.2}
    \inj_{\MM_1}(p) 
    \ge \frac \pi {2 \sqrt 2} \cdot \frac 1 {(1 + 2\beta^2)^2 \gamma}, 
    \qquad p \in \R^2. 
  \end{equation}
\end{proposition}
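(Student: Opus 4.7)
The plan is to verify the hypotheses of \Prp{b.1} for the pair $\MM_0 := (\R^2, \gEucl)$ and $\MM_1 := (\R^2, g_f)$ and then read off the bound. Since the Euclidean plane has infinite injectivity radius, the minimum in~\eqref{eq:inj.rad.est.1} will be attained by the curvature term, and this will yield exactly~\eqref{eq:inj.rad.est.2}.

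First I would determine the quasi-isometry constant. As recalled at the start of \Sec{examples}, the eigenvalues of the matrix representing $g_f(p)$ in standard coordinates are $1$ and $1 + |\nabla f(p)|^2$. Since $|\nabla f(p)|^2 \le 2\beta^2$ by hypothesis, this gives
\begin{equation*}
    \gEucl(\xi,\xi) \le g_f(\xi,\xi) \le (1 + 2\beta^2) \gEucl(\xi,\xi),
    \qquad \xi \in T_p \R^2, \; p \in \R^2,
\end{equation*}
so that $\gEucl$ and $g_f$ are quasi-isometric in the sense of \Def{2.2} with $\eta := 1/(1 + 2\beta^2)$.

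Next I would bound the sectional curvature. In two dimensions this agrees with the Gauss curvature, for which \Sec{examples} records the formula $\kappa = \det H_f / (\det g_f)^2$. Since every entry of $H_f$ is bounded in absolute value by $\gamma$, we have $|\det H_f| \le |f_{xx} f_{yy}| + f_{xy}^2 \le 2\gamma^2$, while $\det g_f = 1 + |\nabla f|^2 \ge 1$. Hence $|\kappa| \le 2\gamma^2$, and $K := 2\gamma^2$ bounds the sectional curvatures of both $\MM_0$ (trivially, since it is flat) and $\MM_1$.

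Finally I would verify completeness and apply \Prp{b.1}. The quasi-isometry established above shows that the distance functions $d_{g_f}$ and $d_\gEucl$ are Lipschitz equivalent on $\R^2$, so $(\R^2, g_f)$ is metrically complete; by Hopf--Rinow it is therefore geodesically complete as well. \Prp{b.1} then yields
\begin{equation*}
   \inj_{\MM_1}(p)
   \ge \frac12 \min\Bigl\{\frac{\eta^2 \pi}{\sqrt K}, \, \eta \inj_{\MM_0}(p) \Bigr\}.
\end{equation*}
Since $\inj_{\MM_0}(p) = +\infty$, the first term realizes the minimum; substituting $\eta = 1/(1 + 2\beta^2)$ and $\sqrt K = \sqrt 2\,\gamma$ produces exactly~\eqref{eq:inj.rad.est.2}. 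The only point requiring a hint of care is the use of Hopf--Rinow to upgrade metric completeness to geodesic completeness, without which \Prp{b.1} could not be invoked; beyond that the argument is a direct application of the comparison principle.
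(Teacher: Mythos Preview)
Your proof is correct and follows exactly the approach of the paper: choose $\eta = 1/(1+2\beta^2)$ from the eigenvalues of $g_f$, take $K = 2\gamma^2$ from the curvature formula, note that $\inj_{\MM_0} = \infty$, and read off the bound from \Prp{b.1}. You even supply details the paper omits, in particular the explicit verification of completeness of $(\R^2,g_f)$ via quasi-isometry and Hopf--Rinow.
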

\begin{proof}
  The injectivity radius of $\MM_0 = (\R^2, \gEucl)$ is infinite at all $p
  \in \R^2$.  As for the constants $\eta$ and $K$ in \Prp{b.1}
  we may take $\eta := 1/(1 + 2\beta^2)$ and $K := 2 \gamma^2$ since
  the curvature $\kappa$ satisfies $|\kappa(p)| \le |\det H_f(p)|$ by
  eqn.~\eqref{eq:curvature}. The desired estimate now follows from
  eqn.~\eqref{eq:inj.rad.est.1}.
\end{proof} 

Henceforth we will drop the factor $\pi/(2\sqrt 2) > 1$ for better
readability.  We next consider $f \in \Contspace[2](\R^2, \R)$ without
assuming a bound for the derivatives of $f$.

\begin{proposition}
  \label{prp:b.3}
  Let $M = \R^2$ and let $f \in \Contspace[2](\R^2, \R)$.  Let $g =
  g_f$ as defined above, and let $\MM_1 := (\R^2, g_f)$.  For $p_0 \in
  M$, let
  \begin{align}
    \label{eq:beta.pzero.1}
    \beta(p_0) 
    &:= \max \bigset{|D_i f(p)|}{|p - p_0| \le 2, \> i \in \{ 1,2\} },\\
    \gamma(p_0)
    &:= \max \bigset{|D_{ij} f(p)|}{|p - p_0| \le 2, \> i,j \in \{1,2\} }.  
  \end{align}
  Then there is a constant $c \ge 0$, which is independent of $f$, such
  that the radius of injectivity of $\MM_1$ at $p_0 \in \R^2$ satisfies
  \begin{equation}
    \label{eq:inj.rad.est.3}
    \inj_{\MM_1}(p_0) 
    \ge \min \bigl\{ 1, (1 + 2c^2\beta(p_0)^2)^{-2}  
    (\gamma(p_0) + c\beta(p_0))^{-1} \bigr\}. 
  \end{equation}
\end{proposition}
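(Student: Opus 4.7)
The plan is to reduce \Prp{b.3} to the global statement \Prp{b.2} by cutting off $f$ to a small neighborhood of $p_0$ and then transferring the resulting bound back to $g_f$ via a geodesic-ball comparison. The key observation throughout is that the graph metric satisfies $g_f \ge \gEucl$ (since $g_f - \gEucl = (\nabla f)(\nabla f)^{\mathrm T}$ is positive semidefinite), so that every $g_f$-geodesic of length $r$ starting at $p_0$ remains inside the Euclidean ball of radius $r$ around $p_0$.

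First, since $g_f$ depends only on $\nabla f$, I may replace $f$ by $f - f(p_0)$ without changing the metric. Fix once and for all a template cutoff $\chi_0 \in \Cci{\R^2}$ with $\chi_0 \equiv 1$ on $\{|p| \le 1\}$, supported in $\{|p| < 2\}$, and with $\|\nabla \chi_0\|_\infty, \|D^2 \chi_0\|_\infty \le c_0$ for some universal constant $c_0$ independent of $p_0$ and $f$. Put $\chi(p) := \chi_0(p - p_0)$ and $\tilde f := \chi \cdot (f - f(p_0)) \in \Cci{\R^2}$. The mean value theorem gives $|f(p) - f(p_0)| \le 2\,\beta(p_0)$ for $|p - p_0| \le 2$, and the product rule then yields the global bounds
\[
\|\nabla \tilde f\|_\infty \le (2c_0 + 1)\,\beta(p_0), \qquad \|D^2 \tilde f\|_\infty \le 4 c_0\,\beta(p_0) + \gamma(p_0),
\]
with both $\nabla \tilde f$ and $D^2 \tilde f$ vanishing outside $\{|p - p_0| \le 2\}$. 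Since $(\R^2, g_{\tilde f})$ agrees with Euclidean space off a compact set and is therefore complete, \Prp{b.2} applies with these constants; dropping the absolute factor $\pi/(2\sqrt 2) \ge 1$ it yields
\[
\inj_{(\R^2,\, g_{\tilde f})}(p_0) \ge \bigl(1 + 2(2c_0+1)^2 \beta(p_0)^2\bigr)^{-2}\, \bigl(4 c_0\,\beta(p_0) + \gamma(p_0)\bigr)^{-1},
\]
and choosing $c := 4c_0 + 1$ rewrites the right-hand side in the form $(1 + 2c^2 \beta(p_0)^2)^{-2}(\gamma(p_0) + c\,\beta(p_0))^{-1}$ claimed in the proposition.

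To finish, set $\rho' := \min\{1,\, \inj_{(\R^2,\, g_{\tilde f})}(p_0)\}$. Because $g_{\tilde f} \ge \gEucl$, every $g_{\tilde f}$-geodesic of length at most $\rho'$ emanating from $p_0$ (together with the image of the $g_{\tilde f}$-exponential on a ball of radius $\rho'$) stays inside the Euclidean ball $\{|p - p_0| < 1\}$, where $g_f = g_{\tilde f}$ by construction. Since the geodesic equation, Jacobi fields, and hence conjugate points and geodesic loops are determined by the metric in the region they traverse, exactly the same structure occurs for $g_f$: no $g_f$-conjugate point at distance $< \rho'$ and no $g_f$-geodesic loop of length $< 2\rho'$ at $p_0$ can exist. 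Hence $\inj_{(\R^2,\, g_f)}(p_0) \ge \rho'$, which together with the previous paragraph gives the stated bound. The one genuinely delicate point is this last step: the injectivity radius is a priori a global invariant, and the argument relies crucially on the one-sided comparison $g_f, g_{\tilde f} \ge \gEucl$ to confine the relevant geodesics to the region where the two metrics coincide; without it the truncation at $1$ in the statement would not suffice.
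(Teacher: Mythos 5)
Your proof is correct and follows the same route as the paper: cut $f$ off by a translated bump function $\chi$, apply \Prp{b.2} to the compactly supported $\tilde f := \chi(f - f(p_0))$, and absorb the cutoff-dependent constants into a single universal constant $c$. The minor numerical differences in your intermediate bounds (you write $|f(p)-f(p_0)|\le 2\beta(p_0)$, whereas the mean-value theorem gives $2\sqrt 2\,\beta(p_0)$ because $|\nabla f|\le\sqrt 2\max_i|D_if|$; similar $\sqrt2$ factors appear in the product-rule bounds) are harmless since they only affect the unnamed constant.

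Where you genuinely go beyond the paper's exposition is the final step. The paper applies \Prp{b.2} to $(\R^2,g_{\tilde f})$ and then simply writes ``we obtain the desired result,'' leaving implicit the passage from $\inj_{(\R^2,g_{\tilde f})}(p_0)$ back to $\inj_{(\R^2,g_f)}(p_0)$. You correctly identify that this is the delicate point and supply the missing argument: since $g_f-\gEucl=(\nabla f)(\nabla f)^{\mathrm T}\ge 0$, a $g_f$-geodesic segment of $g_f$-length $<\rho'\le 1$ emanating from $p_0$ (and likewise a $g_f$-geodesic loop at $p_0$ of length $<2\rho'$, and the Jacobi fields along such a segment) stays inside the Euclidean ball $B_1(p_0)$, where $g_f=g_{\tilde f}$, so $g_f$-conjugate points and loops at scale $<\rho'$ would already be $g_{\tilde f}$-ones, contradicting $\inj_{(\R^2,g_{\tilde f})}(p_0)\ge\rho'$. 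This is precisely the justification the paper needs and omits; your observation that the one-sided comparison $g_f\ge\gEucl$ makes the truncation at $1$ suffice is the right insight and makes the proof self-contained.
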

\begin{proof}
  Let $p_0 \in \R^2$. We may assume $f(p_0) = 0$
  without restriction of generality since the lower bound
  of~\eqref{eq:inj.rad.est.1} depends only on derivatives of $f$.  Let
  $\phi \in \Cci{B_2(p_0)}$ satisfy $0 \le \phi \le 1$ and $\phi(p) =
  1$ for $p \in B_1(p_0)$.  The function $\tilde f := \phi f$ has
  support contained in $B_2(p_0)$.  Since $f(p_0) = 0$ we have
  $|\tilde f(p)| \le 2 \sqrt 2 \beta(p_0)$ for all $p \in B_2(p_0)$ by
  the mean value theorem. Routine calculations then lead to the
  estimates
  \begin{align*}
    |D_i \tilde f(p)|  \le c_\phi \beta(p_0), \qquad 
    |D_{ij} \tilde f(p)|  \le \gamma(p_0) + c_\phi \beta(p_0), 
  \end{align*}
  for all $p \in \R^2$ and $i,j \in \{1, 2\}$, where $c_\phi$ is a
  constant depending only on a bound for the first and second order
  derivatives of $\phi$. We may also assume that these bounds are
  independent of $p_0 \in \R^2$.  Applying the
  estimate~\eqref{eq:inj.rad.est.2} with $c_\phi \beta(p_0)$ and
  $\gamma(p_0) + c_\phi\beta(p_0)$ replacing $\beta$ and $\gamma$,
  respectively, we obtain the desired result.
\end{proof}

In order to deal with branch points or other singularities, we now
consider functions $f$ on the punctured plane $\R^2_* := \R^2
\setminus \{(0,0)\}$.  The method used in the proof of \Prp{b.3} could
be easily adapted to the case where $p_0$ is close to the
origin. However, this would require working with cut-offs $\phi$ which
are supported in $B_{2 \rho}(p_0)$ and which are equal to $1$ on
$B_\rho(p_0)$, for some $0 < \rho < \frac 1 2 |p_0|$. In this case the
constant $c_\phi$ in the proof of the
estimate~\eqref{eq:inj.rad.est.3} would blow up like $|p_0|^{-2}$, as
$p_0 \to (0,0)$.  We therefore first restrict $f$ to a suitable
half-disc (with positive distance to the origin) and then use a
$\Contspace[2]$-extension method.

\begin{proposition}
  \label{prp:b.4}
  For $f \in \Contspace[2](\R^2_*, \R)$ we define the metric $g = g_f$
  on $\R^2_*$ as before and we let $\MM_1 := (\R^2_*, g_f)$.  For $p_0
  \in \R^2_*$ we consider the annulus
  \begin{align*}
    A(p_0) := 
    \{ p \in \R^2 \mid \frac 1 2 |p_0| \le  |p| \le  \frac 1 2 |p_0| + 2\}
  \end{align*} 
  and we define 
  \begin{align*}
    \beta(p_0) &  
    :=  \max \bigset{|D_i f(p)|}{p \in A(p_0),  \medskip  i \in \{1,2\}},\\
    \gamma(p_0) & 
    :=  \max \bigset{|D_{ij} f(p)|}{p \in A(p_0), \medskip i,j \in \{1,2\}}. 
  \end{align*}
  Then the radius of injectivity of $\MM_1$ at $p_0 \in \R^2_*$ with
  $|p_0| \le 1$ satisfies the lower bound
  \begin{equation}
    \label{eq:inj.rad.est.4}
    \inj_{\MM_1}(p_0) 
    \ge \min \bigl\{ |p_0|/2, (1 + 2c^2{\beta}(p_0)^2)^{-2} 
           ({\gamma}(p_0) + c{\beta}(p_0))^{-1} \bigr\}, 
  \end{equation}
  where $c \ge 0$ is a constant which can be chosen to be independent
  of $f$ and $p_0$.
\end{proposition}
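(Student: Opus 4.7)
The strategy is to reduce to \Prp{b.3} via a localized $\Contspace[2]$-extension argument, following the hint preceding the proposition. The reason we cannot simply apply the cut-off argument of \Prp{b.3} directly is that any cut-off isolating a ball $B_\rho(p_0)$ with $\rho < |p_0|/2$ from the origin would have first and second derivatives of order $1/|p_0|$ and $1/|p_0|^2$, both of which blow up as $p_0 \to (0,0)$. The half-disc trick circumvents this by working in a region that stays at distance $|p_0|/2$ from the origin by construction.

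\medskip

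First, by rotational symmetry of $\gEucl$, I may assume $p_0 = (|p_0|,0)$. Let $\rho := |p_0|/2$, let $c := (|p_0|/2,0)$, and consider the closed half-disc
\begin{equation*}
  D := \bigset{p \in \R^2}{\abs{p - c} \le 2 \text{ and } p_x \ge |p_0|/2}.
\end{equation*}
A direct computation using $|p_0|\le 1$ shows that $B_\rho(p_0) \subset D \subset A(p_0)$: the lower bound $|p|\ge p_x \ge |p_0|/2$ holds on $D$ by definition, the upper bound $|p|\le |p-c| + |c| \le 2 + |p_0|/2$ holds as well, and for $q \in B_\rho(p_0)$ we have $q_x \ge |p_0| - \rho = |p_0|/2$ and $|q-c|\le |q-p_0|+|p_0-c|\le |p_0|\le 1 \le 2$. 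In particular, the hypotheses on $\beta(p_0)$ and $\gamma(p_0)$ furnish bounds $|D_i f| \le \beta(p_0)$ and $|D_{ij} f| \le \gamma(p_0)$ on all of~$D$.

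\medskip

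Second, since $D$ is a half-disc of fixed shape and size (namely radius $2$), a standard $\Contspace[2]$-extension theorem (for instance, reflection across the flat edge followed by a smooth cut-off in the radial direction) produces an extension $\tilde f \in \Contspace[2](\R^2,\R)$ with $\tilde f\restr{D} = f\restr{D}$ and with global bounds
\begin{equation*}
  \abs{D_i \tilde f(p)} \le c_0 \beta(p_0),
  \qquad
  \abs{D_{ij}\tilde f(p)} \le c_0 \bigl(\beta(p_0) + \gamma(p_0)\bigr),
  \qquad p \in \R^2,
\end{equation*}
for a universal constant $c_0 \ge 1$ independent of $p_0$ and $f$ (here, as before, we may assume $f(p_0) = 0$ so that the zeroth-order part of the reflection is controlled by $\beta(p_0)$ on $D$ via the mean-value theorem). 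Applying \Prp{b.3} to $\tilde f$ at $p_0$, with constants $c_0\beta(p_0)$ and $c_0(\beta(p_0)+\gamma(p_0))$ in place of $\beta,\gamma$, yields a lower bound of the form
\begin{equation*}
  \inj_{(\R^2,g_{\tilde f})}(p_0)
  \ge \min \bigl\{1,\; (1 + 2c^2 \beta(p_0)^2)^{-2}\bigl(\gamma(p_0) + c\beta(p_0)\bigr)^{-1}\bigr\}
\end{equation*}
with an adjusted constant $c \ge 0$ absorbing $c_0$.

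\medskip

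Finally, I transfer this estimate back to $\MM_1$. The metrics $g_f$ and $g_{\tilde f}$ coincide on $B_\rho(p_0)$, so their geodesics through $p_0$ agree as long as they stay inside this Euclidean ball. Both metrics are quasi-isometric to $\gEucl$ with ratio bounded below by some $\eta \ge (1 + 2c^2\beta(p_0)^2)^{-1}$, so a $g_{\tilde f}$-geodesic of $g_{\tilde f}$-length not exceeding $\sqrt{\eta}\,\rho$ remains in $B_\rho(p_0)$. Consequently
\begin{equation*}
  \inj_{\MM_1}(p_0)
  \ge \min\bigl\{ \sqrt{\eta}\,\rho,\; \inj_{(\R^2,g_{\tilde f})}(p_0)\bigr\},
\end{equation*}
and the right-hand side dominates the claimed bound \eqref{eq:inj.rad.est.4} after absorbing numerical factors into $c$. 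The main obstacle is the combined bookkeeping of three constants—the extension constant from $D$, the quasi-isometry factor $\eta$, and the constants from \Prp{b.3}—but each step is routine once the half-disc $D$ is set up correctly; the crucial point is that $D$ has a geometry independent of $p_0$, which is precisely what prevents the singularity at the origin from spoiling the bound.
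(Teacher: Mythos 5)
Your proof follows the same route as the paper: restrict $f$ to a half-disc of fixed radius~$2$ centered at $p_0/2$ (so that both its shape and its distance to the origin are controlled uniformly), extend $f$ across the flat edge by reflection \`a la Gilbarg--Trudinger, cut off, and apply the earlier Euclidean injectivity-radius estimate; the paper's half-disc $B_{2,+}(p_1)$ with $p_1 = p_0/2$ is exactly your $D$. Your final step, transferring the estimate from $g_{\tilde f}$ back to $g_f$ via the quasi-isometry factor $\sqrt{\eta}$, is actually spelled out more explicitly than in the paper (which compresses it to ``the desired estimate follows as before''); the residual constant bookkeeping you flag is at the same level of precision as the original argument and is correctly handled by absorption into~$c$.
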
 
\begin{proof}
  Without restriction of generality we may assume that $p_0 = (x_0,
  0)$ with $0 < x_0 \le 1$. For the following construction we refer to
  \Fig{4}. We write $r_0 := \frac 1 2 x_0$ and we let $p_1 := (\frac
  1 2 x_0, 0) = \frac 1 2 p_0$. Then the circle $\partial
  B_{r_0}(p_0)$ passes through the point $p_1$. Let $H_+(p_1) \subset
  \R^2$ denote the half-plane to the right of $p_1$, i.e.,
  \begin{equation*}
    H_+(p_1) := \{ p = (x,y) \in \R^2 \mid 2 x >  x_0 \}. 
  \end{equation*}
  It is easy to see that $B_{r_0}(p_0)$ is contained in $B_{1,+}(p_1)
  := H_+(p_1) \cap B_1(p_1)$. Furthermore, $B_{2,+}(p_1) := H_+(p_1)
  \cap B_2(p_1)$ is contained in the annulus $A(p_0)$.
  \begin{figure}[h]
    \centering
    \begin{picture}(0,0)%
      \includegraphics[scale=0.5]{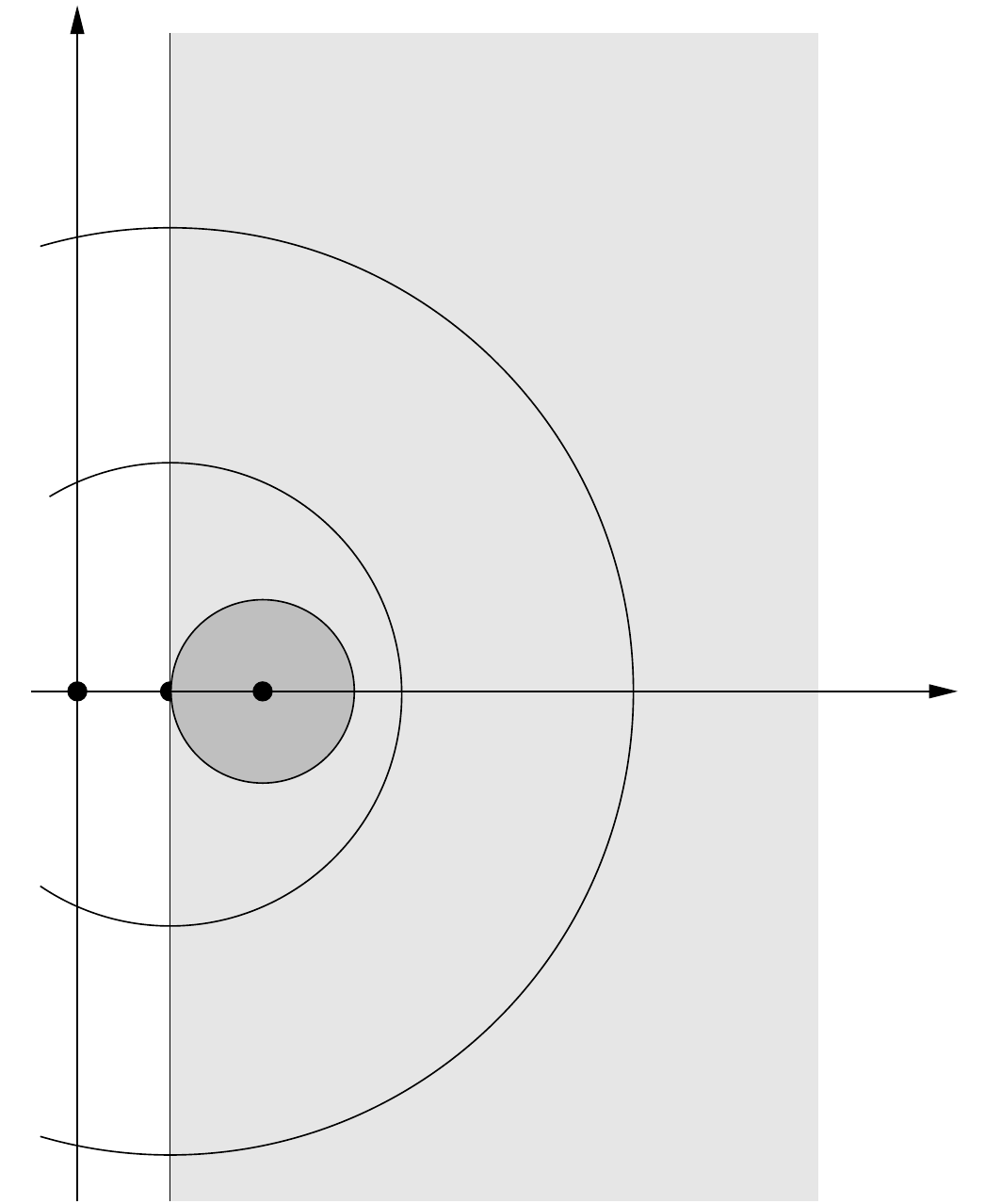}%
    \end{picture}%
    \newlength{\unitlengthorg}
    \setlength{\unitlengthorg}{4144sp}%
    \setlength{\unitlength}{0.5\unitlengthorg}%
\begin{picture}(4845,5829)(176,-6048)
  \put(4906,-3751){$x$}%
  \put( 091,-3426){$O$}%
  \put(531,-3426){$p_1$}%
  \put(1081,-781){$H_+(p_1)$}%
  \put(1261,-3426){$p_0$}%
  \put(946,-1781){$B_2(p_1)$}%
  \put(1036,-3931){$B_{r_0}(p_0)$}%
  \put(946,-2851){$B_1(p_1)$}%
  \put(541,-511){$y$}%
\end{picture}%
    \caption{The reflection method.}
    \label{fig:4}
  \end{figure}

  We now apply the well-known formula for the extension of a function
  of class $\Contspace[2]$ across a hyperplane as in~\cite[Lemma
  6.37]{gilbarg-trudinger:83} to obtain an extension $F$ of $f$ from
  the (closure of) the half-disc $B_{2,+}(p_1)$ into the disc
  $B_2(p_1)$ satisfying the following estimates, valid for all $p \in
  B_2(p_1)$:
 \begin{align*}
   |D_i F(p)| & \le 
        C_\ext \max\bigset{|D_i f(q)|}{q \in \clo B_{2,+}(p_1)} 
  \le C_\ext \beta(p_0),      \\
    |D_{ij} F(p)| & \le 
        C_\ext \max\bigset{|D_{ij} f(q)|}{q \in \clo B_{2,+}(p_1)} 
   \le C_\ext \gamma(p_0), 
 \end{align*} 
 for some constant $C_\ext \ge 0$ as in~\cite[loc.\
 cit.]{gilbarg-trudinger:83}.
 We may now proceed as in the proof of \Prp{b.3}: choose a cut-off
 function $\phi \in \Cci {B_2(p_1)}$ satisfying $\phi(p) = 1$ for all
 $p \in B_1(p_1)$ and let $\tilde f := \phi F$.  We then take $c :=
 c_\phi C_\ext$ with $c_\phi$ as in the proof of \Prp{b.3},
 and the desired estimate follows as before.
\end{proof} 

\begin{remark*}
  Higher order reflections are just one method of obtaining extensions
  of functions of class $\Contspace[2]$. In the case of \Prp{b.4} the
  geometry is particularly simple and we can use reflection at a
  line. Here the orders of differentiation are not mixed in the sense
  that the bounds for the $k$-th order derivatives of the extended
  function $F$ depend solely on bounds for the $k$-th order
  derivatives of $f$, for $k = 1,2$.

  In a more complicated geometric setting, one could work with
  extension from the closed disc $\overline{B}_{r_0}(p_0)$
  using~\cite[Lemma 6.37]{gilbarg-trudinger:83} or employing an
  extension theorem of Whitney type as
  in~\cite[Sec.~VI.2.3]{stein:70}. An advantage of Whitney extension
  lies in the fact that the constant $C_\ext$ can be chosen to be
  independent of the size of the disc $\overline{B}_{r_0}(p_0)$; on
  the other hand, Whitney extension would involve bounds on some
  H\"older-norm for the second order derivatives.
\end{remark*}

We finally return to $M$ as in the body of the paper, with the branch
points $\Qpm$. This is the case which is needed in Section 3. We have
the following result. 
\begin{proposition}
  \label{prp:b.5}
  Let $M$ be the double covering of $\R^2$ with the branch points $\Qpm$ 
 and let $f \in \Contspace[2](M, \R)$ with bounded first and
  second order derivatives.  We define the metric $g = g_f$ on $M$ as
  before and we let $\MM_1 := (M, g_f)$.  Then there is a constant
  $c_f > 0$ such that the radius of injectivity of $\MM_1$ at $p \in
  M$ satisfies the lower bound
  \begin{equation}
    \label{eq:inj.rad.est.5}
    \inj_{\MM_1}(p) 
    \ge c_f \min \{1, \dist(p, q_+), \dist(p,q_-)\}. 
  \end{equation}
\end{proposition}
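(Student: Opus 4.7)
The plan is to reduce \Prp{b.5} on the branched covering $M$ to the Euclidean result of \Prp{b.3} by a local lifting argument followed by a careful transfer of the injectivity radius bound. Given $p \in M$, set $d(p) := \min\{1, \dist(p, q_+), \dist(p, q_-)\}$. Because $d(p) \le \dist(p, q_\pm)$, the geodesic ball $B_{d(p)}(p) \subset M$ is single-valued (cf.\ the discussion after eqn.~\eqref{eq:1.2}) and therefore admits an isometry $\iota \colon B_{d(p)}(p) \to B_{d(p)}(0) \subset \R^2$ onto a standard Euclidean disc. The strategy is then to work entirely inside this Euclidean disc, where $\MM_1$ looks like $(\R^2, g_{\tilde f})$ with $\tilde f := f \circ \iota^{-1}$ a $\Contspace[2]$ function whose first and second order derivatives are bounded by the same constants as those of $f$, since $\iota$ is a Euclidean isometry.

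First, I would extend $\tilde f$ from $B_{d(p)}(0)$ to a function $F \in \Contspace[2](\R^2, \R)$ whose first and second derivatives are still globally bounded by some multiple of the bounds of $f$, using the reflection-based extension procedure from~\cite[Lemma~6.37]{gilbarg-trudinger:83} (or, alternatively, a Whitney-type extension; the choice is not essential since the extension constant appears only as a multiplicative factor that will be absorbed into $c_f$). Next, I would apply \Prp{b.2} to the complete Riemannian manifold $(\R^2, g_F)$ at the origin, obtaining a constant $c > 0$, depending only on the global derivative bounds of $f$, such that $\inj_{(\R^2, g_F)}(0) \ge c$. Since $F = \tilde f$ on $B_{d(p)}(0)$, the metrics $g_F$ and $g_{\tilde f}$ agree there, so any geodesic which remains inside this disc is a genuine $\MM_1$-geodesic after transport by $\iota$. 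The remaining step is a quasi-isometry estimate: with $|\nabla f| \le \beta$ globally, the constant in \Def{2.2} between $g_F$ and the Euclidean metric is $\eta = (1 + 2\beta^2)^{-1}$, so a unit-speed $g_F$-geodesic from $0$ of parameter length $t$ has Euclidean length at most $t/\sqrt{\eta}$. Hence for every $t \le \min\{c, \sqrt\eta \, d(p)\}$ the geodesic stays inside $B_{d(p)}(0)$, and transport via $\iota$ yields $\inj_{\MM_1}(p) \ge \min\{c, \sqrt\eta \, d(p)\} \ge c_f \, d(p)$ with $c_f := \min\{c, \sqrt\eta\}$, using $d(p) \le 1$.

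The main obstacle I expect is precisely this transfer step: the Müller-Salomonsen comparison (\Prp{b.1}) and the derived bound of \Prp{b.2} require complete Riemannian manifolds, so the injectivity radius estimate is obtained only in the extended model $(\R^2, g_F)$, not directly in the incomplete $\MM_1$. The transfer is legitimate only for geodesics that remain inside the agreement region $B_{d(p)}(0)$, which is what makes the quasi-isometry estimate bounding $g_F$-arc length by Euclidean distance indispensable. Near a branch point, where $d(p)$ can be arbitrarily small, one might worry that geodesics could ``see'' the branching; but since we only claim an injectivity radius bound of order $c_f \, d(p) < \dist(p, q_\pm)$, the relevant geodesics stay well away from the branch points by construction, so no separate argument tailored to the branched geometry (such as a direct application of \Prp{b.4} via a square-root chart, which would in any case introduce a conformal factor and complicate the analysis) is needed.
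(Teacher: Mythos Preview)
Your approach differs from the paper's, which splits into two cases: for $p$ at distance $\ge 2$ from both branch points one applies \Prp{b.3} directly, while for $p$ near a branch point one adapts the half-disc/annulus construction of \Prp{b.4}, scaled down so that the relevant half-disc stays in a single sheet. The point of the half-disc trick is that the region from which one extends has \emph{fixed} size, so the extension and cut-off constants are independent of $\dist(p,q_\pm)$.

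Your argument, as written, contains a gap at exactly this spot. You extend $\tilde f$ from the disc $B_{d(p)}(0)$ of variable (possibly very small) radius and assert that the extension $F$ has first and second derivative bounds that are a fixed multiple of those of $f$. This is false in general: neither the reflection-based extension of~\cite[Lemma~6.37]{gilbarg-trudinger:83} nor a scaling-to-unit-ball argument yields a scale-independent bound on $\|D^2 F\|_\infty$; one only gets $\|D^2 F\|_\infty \le C(\beta/d(p) + \gamma)$, which blows up as $p$ approaches a branch point. The paper discusses precisely this obstruction in the paragraph preceding \Prp{b.4} and in the Remark following it (noting that a Whitney-type extension would avoid the scale dependence but would in turn require H\"older control on $D^2 f$, which is not assumed here). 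Consequently your claim that $\inj_{(\R^2,g_F)}(0) \ge c$ with $c$ depending only on the derivative bounds of $f$ is not justified.

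That said, your strategy can be repaired without abandoning it entirely. If you accept the blow-up $\|D^2 F\|_\infty \le C(\beta/d(p)+\gamma)$ while noting that $\|DF\|_\infty$ stays bounded (which it does after normalizing $\tilde f(0)=0$, since $\|D F\|_\infty \le C(2\beta + d(p)\gamma) \le C(2\beta+\gamma)$), then \Prp{b.2} gives
\[
  \inj_{(\R^2,g_F)}(0) \;\ge\; \frac{c'}{\beta/d(p)+\gamma} \;\ge\; c''\,d(p),
\]
and since $g_F \ge \gEucl$ everywhere (the eigenvalues of a graph metric are $\ge 1$), any $g_F$-geodesic from $0$ of length at most $c''\,d(p)$ has Euclidean length at most $c''\,d(p) \le d(p)$, hence stays in the agreement region. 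The transfer then yields $\inj_{\MM_1}(p) \ge c''\,d(p)$ as desired. So the overall architecture survives, but the intermediate bound you state for $F$ must be weakened and the $1/\gamma$-dependence in \Prp{b.2} has to do the work of absorbing the $1/d(p)$ factor; this should be made explicit rather than hidden in the phrase ``absorbed into $c_f$''. The paper's half-disc method sidesteps the issue altogether by exploiting that $f$ is already defined on a region of fixed size around $p$, even when $p$ is close to a branch point.
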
  
\begin{proof}
  If $p_0 \in M$ has distance at least $2$ to $\Qpm$, the
  estimate~\eqref{eq:inj.rad.est.3} applies. In the other cases we may
  proceed as in the proof of \Prp{b.4} with some more or less obvious
  modifications which we indicate now:
  %
  \begin{enumerate}
  \item Since the distance between $\Qpm$ is $2$, we need to scale
    down all sizes in the proof of \Prp{b.4} by a factor smaller than
    $1$.

  \item The annulus $A(p_0)$ will now run through both sheets.
    
  \item Since the first and second order derivatives of $f$ are
    bounded, the numbers $\beta(p_0)$ and $\gamma(p_0)$ can be
    estimated uniformly by a fixed constant.
  \end{enumerate}
\end{proof}

%

\providecommand{\bysame}{\leavevmode\hbox to3em{\hrulefill}\thinspace}

\end{document}